\theoremstyle{definition}
\newtheorem{defi}[theorem]{Definition}
\newtheorem{rem}[theorem]{Remark}
\theoremstyle{plain}
\newtheorem{conjecture}[theorem]{Conjecture}
\newtheorem{problem}[theorem]{Problem}
\theoremstyle{definition}
\newtheorem{constrInternal}[theorem]{Construction}
\newenvironment{construction}[2][]
{\pushQED{\qed}\begin{constrInternal}[{for Problem~\ref{#2}\ifthenelse{\isempty{#1}}{}{; #1}}]}
	{\popQED\end{constrInternal}}
\definecolor{LinkColor}{rgb}{0.55,0.0,0.3}
\definecolor{CiteColor}{rgb}{0.55,0.0,0.3}
\definecolor{UrlColor}{rgb}{0.1,0.0,0.88}
\crefname{defi}{Definition}{Definitions}
\newcounter{nameOfYourChoice}
\newcommand{\itemref}[1]{{{\ref{#1}.}}}
\newcommand{\UniMath}{\href{https://github.com/UniMath/UniMath}{\nolinkurl{UniMath}}\xspace}
\newcommand{\shorthash}{c26d11b}
\newcommand{\nolinkcoqident}[1]{\nolinkurl{#1}} 
\newcommand{\coqident}{\begingroup\@makeother\#\@coqident}
\newcommand{\@coqident}[3][]{%
  \ifthenelse{\isempty{#2}}%
  {\nolinkcoqident{#3}}%
  {\ifthenelse{\isempty{#1}}%
  {\href{\coqdocurl{#2}{#3}}{\nolinkcoqident{#3}}}%
  {\href{\coqdocurl{#2}{#3}}{\nolinkcoqident{#1}}}}%
\endgroup}
\newcommand{\coqfile}[2]{%
  \ifthenelse{\isempty{#1}}%
  {\href{\coqdocbaseurl #2.html}{\nolinkcoqident{#2.v}}}%
  {\href{\coqdocbaseurl #1.#2.html}{\nolinkcoqident{#2.v}}}}
\newcommand{\fat}[1]{\textbf{#1}}
\newcommand{\onecategory}{(1\=/)category\xspace}
\newcommand{\onecategories}{(1\=/)categories\xspace}
\newcommand{\cf}{\emph{c.f.} }
\newcommand{\eg}{\emph{e.g.,} }
\newcommand{\eqdef}{:\equiv}
\newcommand{\pred}{P}
\newcommand{\tc}{\theta}
\newcommand{\tcB}{\gamma}
\newcommand{\tcC}{\tau}
\newcommand{\lunitor}{\lambda}
\newcommand{\runitor}{\rho}
\newcommand{\assoc}{\alpha}
\newcommand{\modvar}{\Gamma}
\newcommand{\identitor}[1]{{#1}_i}
\newcommand{\compositor}[1]{{#1}_c}
\newcommand{\constfont}[1]{\ensuremath{\mathsf{#1}}}
\newcommand{\cat}[1]{\ensuremath{\constfont{#1}}\xspace}
\newcommand{\C}{\cat{C}}
\newcommand{\D}{\cat{D}}
\newcommand{\E}{\cat{E}}
\newcommand{\B}{\cat{B}}
\newcommand{\Cat}{\cat{Cat}}
\newcommand{\Onetypes}{\mbox{1-}\cat{Type}}
\newcommand{\Grpd}{\cat{Grpd}}
\newcommand{\Set}{\cat{Set}}
\newcommand{\homC}[3]{\underline{#1(#2,#3)}}
\newcommand{\plunitor}{\constfont{p}_{\lambda}}
\newcommand{\prunitor}{\constfont{p}_{\rho}}
\newcommand{\passoc}{\constfont{p}_{\alpha}}
\newcommand{\ap}[2]{\constfont{ap} \ #1 \ #2}
\newcommand{\function}{\rightarrow}
\newcommand{\mor}[2]{#1 \onecell #2}
\newcommand{\iso}{\cong}
\newcommand{\adjequiv}{\simeq}
\newcommand{\invtwocell}{\constfont{inv2cell}}
\newcommand{\id}{\operatorname{id}}
\newcommand{\vcomp}{\bullet}
\newcommand{\whiskerl}{\vartriangleleft}
\newcommand{\whiskerr}{\vartriangleright}
\newcommand{\onecell}{\rightarrow}
\newcommand{\mytwocell}{\Rightarrow}
\newcommand{\idtotwomor}{\constfont{idto2cell}}
\newcommand{\idleft}{\constfont{idleft}}
\newcommand{\idright}{\constfont{idright}}
\newcommand{\assoceq}{\constfont{assoc}}
\newcommand{\disp}[1]{\bar{#1}}
\newcommand{\widedisp}[1]{\overline{#1}}
\renewcommand{\aa}{\disp{a}}
\newcommand{\aaa}{\widedisp{a'}}
\newcommand{\bb}{\disp{b}}
\newcommand{\cc}{\disp{c}}
\newcommand{\dd}{\disp{d}}
\newcommand{\ff}{\disp{f}}
\newcommand{\fff}{\widedisp{f'}}
\renewcommand{\gg}{\disp{g}}
\newcommand{\hh}{\disp{h}}
\newcommand{\rr}{\disp{r}}
\newcommand{\xx}{\disp{x}}
\newcommand{\yy}{\disp{y}}
\newcommand{\dtc}{\disp{\tc}}
\newcommand{\dtcB}{\disp{\tcB}}
\newcommand{\FF}{\disp{F}}
\newcommand{\FFo}{\disp{F'}}
\newcommand{\FFt}{\disp{F''}}
\newcommand{\GG}{\disp{G}}
\newcommand{\LL}{\disp{L}}
\newcommand{\RR}{\disp{R}}
\newcommand{\etaeta}{\disp{\eta}}
\newcommand{\thetatheta}{\disp{\theta}}
\newcommand{\epseps}{\disp{\epsilon}}
\newcommand{\mm}{\disp{m}}
\newcommand{\dob}[2]{\ensuremath{{#1}_{#2}}} 
\newcommand{\dmor}[3]{#1 \xrightarrow{#3} #2} 
\newcommand{\dtwo}[3]{#1 \xRightarrow{#3} #2} 
\newcommand{\total}[2][]{\ensuremath{\textstyle \int_{#1}{#2}}} 
\newcommand{\dsigma}[2][]{\ensuremath{\sum_{#1}{#2}}} 
\newcommand{\diso}[3]{\ensuremath{ {#1} \iso_{#2} {#3} }}
\newcommand{\dadjequiv}[3]{\ensuremath{ {#1} \adjequiv_{#2} {#3} }}
\newcommand{\did}{\id}
\newcommand{\dproj}{\pi}
\newcommand{\Base}{\cat{Base}}
\newcommand{\mapod}{\cat{Map1D}}
\newcommand{\mapo}{\cat{Map1}}
\newcommand{\maptd}{\cat{Map2D}}
\newcommand{\mapid}{\cat{MapId}}
\newcommand{\mapcd}{\cat{MapC}}
\newcommand{\rpseudo}{\cat{RawPseudo}}
\newcommand{\pseudo}{\cat{Pseudo}}
\newcommand{\pseudoF}[2]{#1 \onecell #2}
\newcommand{\pstrans}[2]{#1 \mytwocell #2}
\newcommand{\modif}[2]{#1 \Rrightarrow #2}
\newcommand{\disppsfun}[3]{\dmor{#1}{#2}{#3}}
\newcommand{\disppstrans}[3]{\dtwo{#1}{#2}{#3}}
\newcommand{\dispmodif}[3]{\xymatrix{#1 \ar@3[r]^-{#3} & #2}}
\newcommand{\algd}{\cat{Alg_D}}
\newcommand{\alg}{\cat{Alg}}
\newcommand{\addcell}{\cat{Add2Cell}}
\newcommand{\PM}{\cat{M_1}}
\newcommand{\LM}{\cat{M_2}}
\newcommand{\M}{\cat{M}}
\newcommand{\K}{\cat{K}}
\newcommand{\PShD}{\cat{PShD}}
\newcommand{\TT}{\cat{CwF_1}}
\newcommand{\Ty}{\cat{Ty}}
\newcommand{\Tm}{\cat{Tm}}
\newcommand{\p}{\cat{p}}
\newcommand{\CwFd}{\cat{dCwF_2}}
\newcommand{\CwFh}{\cat{CwF_2}}
\newcommand{\isCwF}{\cat{isCwF}}
\newcommand{\CwF}{\cat{CwF}}
\newcommand{\image}{\constfont{im}}
\newcommand{\restrict}[1]{\overline{#1}}
\newcommand{\completion}[1]{\mathcal{RC}(#1)}
\newcommand{\loccompletion}[1]{\mathcal{RC}_{\constfont{loc}}(#1)}
\newcommand{\etaloc}{\ensuremath{\eta_{\constfont{loc}}}}
\newcommand{\etaglob}{\ensuremath{\eta_{\constfont{glob}}}}
\newcommand{\idtoiso}{\constfont{idtoiso}}
\newcommand{\dispidtoiso}{\constfont{disp\_idtoiso}}
\newcommand{\Iso}[3][]{\constfont{Iso}_{#1}(#2,#3)} 
\newcommand{\CatIso}[3][]{\constfont{CatIso}_{#1}(#2,#3)} 
\newcommand{\AdjEquiv}[3][]{\constfont{AdjEquiv}_{#1}(#2,#3)} 
\newcommand{\spac}{\hskip 0.2em plus 0.1em}
\def\Lam #1.{\lambda\,#1.\spac}%
\def\Sum #1.{\sum_{#1}\spac}%
\def\Prod #1.{\prod_{#1}\spac}%
\newcommand{\repps}{\constfont{Rep}_0}
\newcommand{\reptr}{\constfont{Rep}_1}
\newcommand{\repmo}{\constfont{Rep}_2}
\newcommand{\yoneda}{\constfont{y}}
\newcommand{\type}[1]{\operatorname{\textsf{#1}}}
\newcommand{\constructor}[1]{\operatorname{\mathsf{#1}}}
\newcommand{\depeq}[1][*]{=_{#1}}                                                                                      
\newcommand{\defeq}{:\equiv}
\newcommand{\hProp}[0]{\type{hProp}}
\newcommand{\U}[0]{\type{U}}
\newcommand{\idpath}{\constructor{refl}}
\newcommand{\J}{\constfont{J}}
\newcommand{\op}[1]{#1^{\constructor{op}}}
\newcommand{\pathgroupoid}{\constfont{PathGrpd}}
\newcommand{\obs}{\constfont{Ob}}
\newcommand{\fbg}[1]{\pi(#1)}
\newcommand{\apPS}[1]{\overline{#1}}
\newcommand{\apPT}[1]{\overline{#1}}
\newcommand{\apMOD}[1]{\overline{#1}}
\newcommand{\twoar}{\ar@{=>}}
\newcommand{\plan}[1]{}
\newcommand{\BA}[1]{}
\newcommand{\NW}[1]{}
\newcommand{\DF}[1]{}
\renewcommand{\plan}[1]{\textcolor{blue}{#1}\PackageWarning{TODO}{TODO: #1}}
\renewcommand{\BA}[1]{\textcolor{orange}{BA: #1}\PackageWarning{TODO}{TODO: #1}}
\renewcommand{\NW}[1]{\textcolor{purple}{NW: #1}\PackageWarning{TODO}{TODO: #1}}
\renewcommand{\DF}[1]{\textcolor{magenta}{DF: #1}\PackageWarning{TODO}{TODO: #1}}
\title{Bicategories in Univalent Foundations}
\titlerunning{Bicategories in Univalent Foundations}
\author{Benedikt Ahrens}
       {Delft University of Technology, The Netherlands \and University of Birmingham, United Kingdom}
       {B.P.Ahrens@tudelft.nl}{https://orcid.org/0000-0002-6786-4538}
       {This material is based upon work supported by the Air Force Office of Scientific Research under award number FA9550-17-1-0363.
        Ahrens acknowledges the support of the Centre for Advanced Study (CAS) in Oslo, Norway, which funded and hosted the research project \emph{Homotopy Type Theory and Univalent Foundations} during the 2018/19 academic year.
       }
\author{Dan Frumin}
       {University of Groningen, The Netherlands}
       {d.frumin@rug.nl}
       {https://orcid.org/0000-0001-5864-7278}
       {Supported by the Netherlands Organisation for Scientific Research (NWO/TTW) under the STW project 14319 and VIDI Project No.  016.Vidi.189.046.}
\author{Marco Maggesi}
       {Dipartimento di Matematica e Informatica ``Dini'', Università degli Studi di Firenze, Italy}
       {marco.maggesi@unifi.it}{https://orcid.org/0000-0003-4380-7691}
       {MIUR, GNSAGA-INdAM.}
\author{Niccolò Veltri}
       {Department of Software Science, Tallinn University of Technology, Estonia}
       {niccolo@cs.ioc.ee}
       {https://orcid.org/0000-0002-7230-3436}
       {This research was supported by the Estonian Research Council grant PSG659 and by the ESF funded Estonian IT Academy
  research measure (project 2014-2020.4.05.19-0001).}
\author{Niels van der Weide}
       {Institute of Computation and Information Science, Radboud University, The Netherlands \and School of Computer Science, University of Birmingham, United Kingdom}
       {nnmvdw@gmail.com}
       {https://orcid.org/0000-0003-1146-4161}
       {}
\authorrunning{B. Ahrens, D. Frumin, M. Maggesi, N. Veltri, and N. van der Weide}
\keywords{bicategory theory, univalent mathematics, dependent type theory, Coq}
\begin{document}

\maketitle

\begin{abstract}
We develop bicategory theory in univalent foundations.
Guided by the notion of univalence for \onecategories studied by Ahrens, Kapulkin, and Shulman, 
we define and study univalent bicategories.
To construct examples of univalent bicategories in a modular fashion, we develop \emph{displayed bicategories}, an analog of displayed 1-categories introduced by Ahrens and Lumsdaine.
We demonstrate the applicability of this notion, and prove that several bicategories of interest are univalent.
Among these are the bicategory of univalent categories with families and the bicategory of pseudofunctors between univalent bicategories.
Furthermore, we show that every bicategory with univalent hom-categories is weakly equivalent to a univalent bicategory.

All of our work is formalized in Coq as part of the \UniMath library of univalent mathematics.
\end{abstract}

\tableofcontents

\section{Introduction}

Category theory (by which we mean 1-category theory) 
is established as a convenient language to structure and discuss mathematical objects and morphisms between them.
To axiomatize the fundamental objects of category theory itself---categories, functors, and natural transformations---the theory of 1-categories
is not enough.
Instead, category-like structures allowing for ``morphisms between morphisms'' were developed to account for the natural transformations.
Among those structures are \emph{bicategories}.
Bicategory theory was originally developed by Bénabou \cite{10.1007/BFb0074299} in set-theoretic foundations.
The goal of our work is to develop bicategory theory in univalent foundations.
Specifically, we give a notion of a \emph{univalent bicategory} and show that some bicategories of interest are univalent, with examples from algebra and type theory. 
To this end, we generalize \emph{(univalent) displayed categories} of Ahrens and Lumsdaine \cite{AhrensL19} to the bicategorical setting,
and prove that the total bicategory generated by a displayed bicategory is univalent, if the
constituent pieces are.
In addition, we show how to embed any bicategory with univalent hom-categories into a univalent bicategory via the Yoneda lemma,
and we show how to use displayed machinery to construct biequivalences between total bicategories.

\subparagraph*{Univalent foundations and categories therein}
According to Voevodsky \cite{Voevodsky}, a foundation of mathematics specifies, in particular, three things:
\begin{enumerate}
 \item a language for mathematical objects;
 \item a notion of proposition and proof; and
 \item an interpretation of those into a world of mathematical objects.
\end{enumerate}
By \enquote{univalent foundations}, we mean the foundation given by
univalent type theory as described, \eg in the HoTT book \cite{hottbook},
with its notion of \enquote{univalent logic}, and
the interpretation of univalent type theory in Kan complexes
expected to arise from Voevodsky's simplicial set model \cite{simpset}.

In the simplicial set model, univalent categories (just called \enquote{categories} in \cite{rezk_completion})
correspond to truncated complete Segal spaces, which in turn are 
equivalent to ordinary (set-theoretic) categories.
In this respect,
univalent categories are \enquote{the right} notion of categories in univalent foundations:
they correspond exactly to the traditional set-theoretic notion of category.
Similarly, the notion of \emph{univalent} bicategory,
studied in this paper, provides the correct notion of bicategory in univalent foundations---see, e.g., \cite[Example~9.1]{univalence-principle}.
In this work, we provide results for showing, modularly, that certain bicategories are univalent.

Throughout this article, we work in type theory with function extensionality.
We explicitly mention any use of the univalence axiom.
We use the notation standardized in \cite{hottbook};
a significantly shorter overview of the setting we work in is given in \cite{rezk_completion}.
As a reference for 1-category theory in univalent foundations, 
we refer to \cite{rezk_completion}, which follows a path suggested by Hofmann and Streicher \cite[Section 5.5]{MR1686862}.

\subparagraph*{Motivation: bicategories for type theory}
One of the motivations for this work stems from
several particular (classes of) bicategories that come up in our work on the semantics of type theories
and Higher Inductive Types (HITs).

Firstly, we are interested in the 
\enquote{categories with structure} that have been used in the 
model theory of type theories.
The purpose of the various categorical structures is to model context extension and substitution.
Prominent such notions are categories with families (see, \eg \cite{DBLP:journals/mscs/ClairambaultD14,DBLP:conf/types/Dybjer95}),
categories with attributes (see, \eg \cite{PittsAM:catl}), and categories with display maps (see, \eg \cite{DBLP:books/daglib/0031002,north_2019}).
Each notion of \enquote{categorical structure} gives rise to a bicategory whose objects are categories equipped with 
such a structure.
In the present work, we provide machinery that can be used to show, in a modular way, that these bicategories are univalent; we exemplify the machinery with categories with families.

Secondly, Dybjer and Moeneclaey define a notion of signature for 1-HITs and study algebras of those signatures \cite{DBLP:journals/entcs/DybjerM18}.
These algebras are groupoids equipped with extra structure according to the signature.
In the present work, we give general methods for constructing bicategories of such algebras
and we demonstrate the usage of those methods by constructing the bicategory of monads internal to a given bicategory.
We then construct a bicategory of Kleisli triples (an alternative presentation of monads\footnote{Also known in the literature as extension systems~\cite{marmolejo2010monads} or Manes-style monads~\cite{manes:algtheories}.}), and show that it is equivalent to the bicategory of monads.
We also show that the resulting bicategory of monads internal to the bicategory of univalent categories is biequivalent to the bicategory of Kleisli triples.

\subparagraph*{Technical contribution: displayed bicategories}
In this work, we develop the notion of \emph{displayed bicategory} in analogy to the 1-categorical notion
of displayed category introduced in \cite{AhrensL19}.
Intuitively, a displayed bicategory $\D$ over a bicategory $\B$ represents data and properties to be added to $\B$ to form a new bicategory:
$\D$ gives rise to the \emph{total bicategory} $\total{\D}$.
Its cells are pairs $(b,d)$ where $d$ in $\D$ is a \enquote{displayed cell} over $b$ in $\B$.
Univalence of $\total{\D}$ can be shown from univalence of $\B$ and \enquote{displayed univalence} of $\D$.
The latter two conditions are easier to show, sometimes significantly easier.

Two features make the displayed point of view particularly useful:
firstly, displayed structures can be \emph{iterated}, making it possible to build bicategories of very complicated objects
layerwise.
Secondly, displayed \enquote{building blocks} can be provided, for which univalence is proved once and for all.
These building blocks, \eg cartesian product, can be used like LEGO\textsuperscript{\texttrademark} pieces to \emph{modularly} build 
bicategories of large structures that are automatically accompanied by a proof of univalence.

We demonstrate these features in examples, proving univalence of three important (classes of) bicategories:
first, the bicategory of pseudofunctors between two univalent bicategories;
second, bicategories of algebraic structures (given as pseudoalgebras of pseudofunctors);
and third, the bicategory of categories with families.

\subparagraph*{Main contributions}
Here we give a list of the main results presented in this paper:
\begin{itemize}
\item Following Ahrens, Kapulkin, and Shulman's construction of the
  Rezk completion for categories \cite[Theorem 8.5]{rezk_completion},
  we show in \Cref{sec:yoneda} that every locally univalent bicategory embeds into a
  univalent one. This result fundamentally relies on the proof of a
  bicategorical version of the Yoneda lemma.
\item We develop displayed infrastructure for bicategories and show that it is useful for building
  bicategories. In particular, we modularly prove univalence of complicated
  bicategories in \Cref{sec:examples}, such as the bicategory of
  pseudofunctors between two univalent bicategories, the bicategory of
  pseudoalgebras of a given pseudofunctor, and the bicategory of
  categories with families.
\item We show the benefits of the displayed infrastructure for
  defining morphisms between bicategories in layers. We demonstrate
  this on two examples in \Cref{sec:dispconstr}: the construction of a
  biequivalence between pointed 1-types and pointed univalent
  groupoids and the construction of a biequivalence between monads
  internal to the bicategory of univalent categories and the
  bicategory of Kleisli triples.
\end{itemize}

\subparagraph*{Formalization}
The results presented here are mechanized in the \UniMath library \cite{UniMath},
which is based on the Coq proof assistant \cite{Coq:Manual}.
The \UniMath library is under constant development; in this paper, we refer to the version
with \texttt{git} hash \href{https://github.com/UniMath/UniMath/tree/\longhash}{\shorthash}.
Throughout the paper, definitions and statements are accompanied by a link to the online documentation of that version. 
For instance, the link \coqident{Bicategories.Core.Bicat}{bicat} points to the definition of a bicategory.

\subparagraph*{Related work}
Our work extends the notion of univalence from 1-categories \cite{rezk_completion}
to bicategories. 
Similarly, we extend the notion of displayed 1-category \cite{AhrensL19} to the bicategorical setting.

Ahrens, North, Shulman, and Tsementzis \cite{DBLP:conf/lics/AhrensNST20} devise a notion of ``signature'' and ``theory'' for mathematical structures. To each theory they associate a type of models of that theory, and a predicate of ``being univalent'' on such models.
Their signatures encompass, in particular, bicategories~\cite[Example~9.1]{univalence-principle}, more specifically, saturated ana-bicategories.
Ana-bicategories that are both saturated and univalent should correspond to the univalent bicategories studied here, even though a formal statement and construction of a suitable equivalence is outside the scope of the present work.

Capriotti and Kraus \cite{DBLP:journals/pacmpl/CapriottiK18} study univalent $(n,1)$-categories for $n \in \{0,1,2\}$.
They only consider bicategories where the 2-cells are equalities between 1-cells;
in particular, all 2-cells in \cite{DBLP:journals/pacmpl/CapriottiK18} are invertible, and their $(2,1)$-categories are 
by definition locally univalent (cf.~\Cref{def:univalence}, \Cref{def-item:local-univalence}). 
Consequently, the condition called \emph{univalence} by Capriotti and Kraus is what we call \emph{global univalence}, 
cf.\ \Cref{def:univalence}, \Cref{def-item:global-univalence}.
In this work, we study bicategories, a.k.a.\ (weak) $(2,2)$-categories, that is, we allow for non-invertible 2-cells.
The examples we study in \Cref{sec:examples} are proper $(2,2)$-categories and are not covered by
\cite{DBLP:journals/pacmpl/CapriottiK18}.

Outside of univalent foundations, there are also computer-checked libraries of bicategory theory, see, \eg~\cite{DBLP:journals/afp/Stark20,DBLP:conf/cpp/HuC21}.

\subparagraph*{Publication history}
This article is an extended version of a conference contribution \cite{DBLP:conf/rta/AhrensFMW19}.
Compared to the conference version, we have added the following content:
\begin{itemize}
 \item In \Cref{sec:bicategories}, we define the notion of
   biequivalence of bicategories, the ``correct'' notion of sameness
   for bicategories. We construct a biequivalence between 1-types and
   univalent groupoids.
 \item In \Cref{sec:univalence}, we present an induction principle for
   invertible 2-cells in a locally univalent bicategory and an
   induction principle for adjoint equivalences in a globally
   univalent bicategory. We put these principles to work in a number
   of examples.
 \item \Cref{sec:2-categories} is new. In there, we propose a
   definition of 2-category and of strict bicategory, and we show that these are equivalent. 
 \item \Cref{sec:yoneda} is new. In there, we show that any bicategory
   embeds into a univalent one via the Yoneda embedding. This
   construction is reminiscent of the Rezk completion for categories.
 \item In \Cref{sec:displayed}, we give the definition of the
   displayed bicategory of monads internal to a given bicategory and
   the displayed bicategory of Kleisli triples . The bicategory of
   monads on a bicategory $\B$ is univalent whenever $\B$ is
   univalent, which is proved in \Cref{sec:algebraic-examples}.
 \item \Cref{sec:dispconstr} is new. In there, we introduce the notion
   of displayed biequivalence. Using this notion, we show that the
   biequivalence between 1-types and univalent groupoids extends to a
   biequivalence between their pointed variants. We also construct a
   biequivalence between the bicategory of Kleisli triples and the bicategory of monads internal to the bicategory of univalent
   categories.
 \item \Cref{sec:disp-inserters} is new. Following a suggestion by an anonymous referee, we generalize the constructions in \Cref{sec:algebraic-examples,sec:cwfs} using displayed inserters.
\end{itemize}

\section{Bicategories and Some Examples}
\label{sec:bicategories}
Bicategories were introduced by Bénabou \cite{10.1007/BFb0074299},
encompassing monoidal categories, 2-categories (in particular, the 2-category of categories),
and other examples.
He (and later many other authors) defines bicategories in the style of
``categories weakly enriched in categories''.
That is, the hom-objects $\B_1(a,b)$ of a bicategory $\B$ are taken to be \onecategories, and composition
is given by a functor $\B_1(a,b)\times \B_1(b,c) \to \B_1(a,c)$.
This presentation of bicategories is concise and convenient for communication between mathematicians.

In this article, we use a different, more unfolded definition of bicategories, which is inspired by Bénabou \cite[Section~1.3]{10.1007/BFb0074299}
and \cite[Section `Details']{nlab:bicategory}.
One the one hand, it is more verbose than the definition via weak enrichment.
On the other hand, it is better suited for our purposes, in particular,
it is suitable for defining \emph{displayed bicategories}, cf.\ \Cref{sec:displayed}.

\begin{defi}[\coqident{Bicategories.Core.Bicat}{prebicat}, \coqident{Bicategories.Core.Bicat}{bicat}]
 \label{def:bicat}
A \fat{prebicategory} $\B$ consists of 
\begin{enumerate}
        \item a type $\B_0$ of \fat{objects}; \label{item:0-cell}
        \item a type $\B_1(a, b)$ of \fat{1-cells} for all $a, b : \B_0$; \label{item:1-cell}
        \item a type $\B_2(f, g)$ of \fat{2-cells} for all $a, b : \B_0$ and $f, g : \B_1(a,b)$; \label{item:2-cell}
	\item an \fat{identity 1-cell} $\id_1(a) : \B_1(a,a)$; \label{item:id-1}
	\item a \fat{composition} $\B_1(a,b) \times \B_1(b,c) \rightarrow \B_1(a,c)$, written $f \cdot g$; \label{item:comp-1}
	\item an \fat{identity 2-cell} $\id_2(f) : \B_2(f,f)$; \label{item:id-2}
	\item a \fat{vertical composition} $\tc \vcomp \tcB : \B_2(f,h)$ for all 1-cells $f,g,h : \B_1(a,b)$ and 2-cells $\tc : \B_2(f,g)$ and $\tcB : \B_2(g,h)$; \label{item:v-comp}
	\item a \fat{left whiskering} $f \whiskerl \tc : \B_2(f \cdot g, f \cdot h)$ for all 1-cells $f : \B_1(a,b)$ and $g,h : \B_1(b,c)$ and 2-cells $\tc : \B_2(g,h)$; \label{item:left-whisker}
	\item a \fat{right whiskering} $\tc \whiskerr h : \B_2(f \cdot h, g \cdot h)$ for all 1-cells $f, g : \B_1(a,b)$ and $h : \B_1(b,c)$ and 2-cells $\tc : \B_2(f,g)$; \label{item:right_whisker}
	\item a \fat{left unitor} $\lunitor(f) : \B_2(\id_1(a) \cdot f,f)$ and its inverse $\lunitor(f)^{-1} : \B_2(f,\id_1(a) \cdot f)$; \label{item:lunitor}
	\item a \fat{right unitor} $\runitor(f) : \B_2(f \cdot \id_1(b),f)$ and its inverse $\runitor(f)^{-1} : \B_2(f,f \cdot \id_1(b))$; \label{item:runitor}
	\item a \fat{left associator} $\assoc(f,g,h) : \B_2(f \cdot (g \cdot h), (f \cdot g) \cdot h)$ and a right associator $\assoc(f,g,h)^{-1} : \B_2((f \cdot g) \cdot h, f \cdot (g \cdot h))$ for $f : \B_1(a,b)$, $g : \B_1(b,c)$, and $h : \B_1(c,d)$ \label{item:lassociator}
	\setcounter{nameOfYourChoice}{\value{enumi}}
\end{enumerate}
such that, for all suitable objects, 1-cells, and 2-cells,
 \begin{enumerate}
         \setcounter{enumi}{\value{nameOfYourChoice}}
	 \item $\id_2(f) \vcomp \tc = \tc, \quad \tc \vcomp \id_2(g) = \tc, \quad \tc \vcomp (\tcB \vcomp \tcC) = (\tc \vcomp \tcB) \vcomp \tcC$; \label{item:vcomp-l-r-assoc}
	 \item $f \whiskerl (\id_2 g) = \id_2(f \cdot g), \quad f \whiskerl (\tc \vcomp \tcB) = (f \whiskerl \tc) \vcomp (f \whiskerl \tcB)$; \label{item:lwhisker-id-comp}
	 \item $(\id_2 f) \whiskerr g = \id_2(f \cdot g), \quad (\tc \vcomp \tcB) \whiskerr g = (\tc \whiskerr g) \vcomp (\tcB \whiskerr g)$; \label{item:rwhisker-id-comp}
	 \item $(\id_1(a) \whiskerl \tc) \vcomp \lambda(g) = \lambda(f) \vcomp \tc$; \label{item:id-lwhisker-vcomp}
	 \item $(\tc \whiskerr \id_1(b)) \vcomp \rho(g) = \rho(f) \vcomp \tc$; \label{item:id-rwhisker-vcomp}
	 \item $(f \whiskerl (g \whiskerl \tc)) \vcomp \alpha(f, g, i) = \alpha(f,g,h) \vcomp ((f \cdot g) \whiskerl \tc)$; \label{item:lwhisker-assoc}
	 \item $(f \whiskerl (\tc \whiskerr i)) \vcomp \alpha(f,h,i)  = \alpha(f,g,i) \vcomp ((f \whiskerl \tc) \whiskerr i)$; \label{item:lwhisker-rwhisker-assoc}
	 \item $(\tc \whiskerr (h \cdot i)) \vcomp \alpha(g,h,i) = \alpha(f,h,i) \vcomp ((\tc \whiskerr h) \whiskerr i)$; \label{item:rwhisker-assoc}
	 \item $(\tc \whiskerr h) \vcomp (g \whiskerl \tcB) = (f \whiskerl \tcB) \vcomp (\tc \whiskerr i)$; \label{item:vcomp-whisker}
	 \item $\lambda(f) \vcomp \lambda(f)^{-1} = \id_2(\id_1(a) \cdot f), \quad \lambda(f)^{-1} \vcomp \lambda(f) = \id_2(f)$; \label{item:lambda}
	 \item $\rho(f) \vcomp \rho(f)^{-1} = \id_2(f \cdot \id_1(b)), \quad \rho(f)^{-1} \vcomp \rho(f) = \id_2(f)$; \label{item:rho}
	 \item $\alpha(f,g,h) \vcomp \alpha(f,g,h)^{-1} = \id_2(f \cdot (g \cdot h)), \quad \alpha(f,g,h)^{-1} \vcomp \alpha(f,g,h) = \id_2((f \cdot g) \cdot h)$; \label{item:alpha}
	 \item $\alpha(f, \id_1(b),g) \vcomp (\rho(f) \whiskerr g) = f \whiskerl \lambda(g)$; \label{item:tri}
\[
\xymatrix{
f \cdot (\id_1(b) \cdot g) \twoar[rr]^{\alpha(f, \id_1(b), g)}
                           \twoar[d]_{f \whiskerl \lambda(g)}
             && (f \cdot \id_1(b)) \cdot g \twoar[dll]^{\rho(f) \whiskerr g} \\
f \cdot g &&
}
\]
	 \item $\alpha(f,g,h \cdot i) \vcomp \alpha(f \cdot g, h, i) = (f \whiskerl \alpha(g,h,i)) \vcomp \alpha(f,g \cdot h, i) \vcomp (\alpha(f,g,h) \whiskerr i)$.\label{item:pent}
\[
\xymatrix{
f \cdot (g \cdot (h \cdot i)) \twoar[rr]^{\alpha(f, g, h \cdot i)} \twoar[d]_{f \whiskerl \alpha(g, h, i)}
               && (f \cdot g) \cdot (h \cdot i) \twoar[rr]^{\alpha(f\cdot g, h, i)}
               && ((f \cdot g) \cdot h) \cdot i \\
f \cdot ((g \cdot h) \cdot i) \twoar[rr]_{\alpha(f, g \cdot h, i)}
               && (f \cdot (g \cdot h)) \cdot i  \twoar[urr]_{\ \ \ \alpha(f, g, h) \whiskerr i}
               &&
}
\]
 \end{enumerate}
 A \fat{bicategory} is a prebicategory whose types of 2-cells $\B_2(f,g)$ are \emph{sets} for all $a,b : \B_0$ and $f,g : \B_1(a,b)$.
\end{defi}

We write $a \onecell b$ for $\B_1(a,b)$ and $f \mytwocell g$ for $\B_2(f,g)$.

Mitchell Riley formalized a definition of bicategories as ``categories weakly enriched in categories'' in \UniMath, based on work by Peter LeFanu Lumsdaine.
We do not reproduce this definition here; it is available as \coqident{Bicategories.WkCatEnrichment.prebicategory}{prebicategory}.
That definition is equivalent to our definition, in the following sense:
\begin{proposition}[\coqident{Bicategories.WkCatEnrichment.hcomp_bicat}{weq_bicat_prebicategory}]
  \label{prop:eqdef_weakenrichment}
  The type of bicategories defined in \Cref{def:bicat} is equivalent to
 the type of bicategories in terms of weak enrichment.
\end{proposition}

For this result, one needs to show that each $\B_1(a,b)$ forms a category whose morphisms are 2-cells.
Let us introduce this formally.

\begin{defi}[\coqident{Bicategories.Core.Bicat}{hom}]
Let $\B$ be a bicategory and $a, b : \B_0$ objects of $\B$.
Then we define the \fat{hom-category} $\homC{\B_1}{a}{b}$ to be the category whose objects are 1-cells $f : a \onecell b$ and whose morphisms from $f$ to $g$ are 2-cells $\alpha : f \mytwocell g$ of $\B$.
The identity morphisms are identity 2-cells and the composition is vertical composition of 2-cells.
\end{defi}

Recall that our goal is to study univalence of bicategories, which is a property that relates equivalence and equality.
For this reason, we study the two analogs of the 1-categorical notion of isomorphism.
The corresponding notion for 2-cells is that of \emph{invertible 2-cells}.
\begin{defi}[\coqident{Bicategories.Core.Bicat}{is_invertible_2cell}]
A 2-cell $\tc : f \mytwocell g$ is called \fat{invertible} if we have $\tcB : g \mytwocell f$ such that $\tc \vcomp \tcB = \id_2(f)$ and $\tcB \vcomp \tc = \id_2(g)$.
An \fat{invertible 2-cell} consists of a 2-cell and a proof that it is invertible, and $\invtwocell(f,g)$ is the type of invertible 2-cells from $f$ to $g$.
\end{defi}
Since 2-cells form a set and inverses are unique, being an invertible 2-cell is a proposition.
In addition, $\id_2(f)$ is invertible, and we write $\id_2(f) : \invtwocell(f,f)$ for this invertible 2-cell.

The bicategorical analog of isomorphisms for 1-cells is the notion of \emph{adjoint equivalence}.
\begin{defi}[\coqident{Bicategories.Core.Adjunctions}{adjoint_equivalence}]
\label{def:adjequiv}
An \fat{adjoint equivalence structure} on a 1-cell $f: a \onecell b$ consists of a 1-cell $g : b \onecell a$ and invertible 2-cells $\eta : \id_1(a) \mytwocell f \cdot g$ and $\varepsilon : g \cdot f \mytwocell \id_1(b)$
such that the following two diagrams commute
\[\xymatrix{
(f \cdot g) \cdot f \twoar[rr]^{\alpha(f,g,f)} & & f \cdot (g \cdot f) \twoar[d]^{f \whiskerl \varepsilon} & \\
\id_1(a) \cdot f \twoar[u]^{\eta \whiskerr f} & & f \cdot \id_1(b) \twoar[d]^{\rho(f)} \\
f \twoar[rr]_{\id_2(f)} \twoar[u]^{\lambda(f)^{-1}} & & f
}
\quad
\xymatrix{
g\cdot(f\cdot g) \twoar[rr]^{\alpha(g,f,g)^{-1}} & & (g\cdot f)\cdot g \twoar[d]^{\varepsilon \whiskerr g} & \\
g \cdot \id_1(b) \twoar[u]^{g \whiskerl \eta} & & \id_1(b) \cdot g \twoar[d]^{\lambda(g)} \\
g \twoar[rr]_{\id_2(g)} \twoar[u]^{\rho(g)^{-1}} & & g
}\]

An \fat{adjoint equivalence} consists of a 1-cell $f$ together with an adjoint equivalence structure on $f$.
The type $\AdjEquiv{a}{b}$ consists of all adjoint equivalences from $a$ to $b$.
\end{defi}
We call $\eta$ and $\varepsilon$ the unit and counit of the adjoint equivalence, and we call $g$ the right adjoint.
The prime example of an adjoint equivalence is the identity 1-cell $\id_1(a)$ and we denote it by $\id_1(a) : \AdjEquiv{a}{a}$.
Sometimes, we write $a \adjequiv b$ for $\AdjEquiv{a}{b}$.

Before we start our study of univalence, we present some examples of bicategories and preliminary notions from bicategory theory.

\begin{example}[\coqident{Bicategories.Core.Examples.TwoType}{fundamental_bigroupoid}]
\label{ex:bigroupoid}
Let $X$ be a 2-type.
Then we define the \fat{fundamental bigroupoid} $\fbg{X}$ to be the bicategory whose 0-cells are inhabitants of $X$, 1-cells from $x$ to $y$ are paths $x = y$, and 2-cells from $p$ to $q$ are higher-order paths $p = q$.
The operations, such as composition and whiskering, are defined using path induction.
Every 1-cell is an adjoint equivalence and every 2-cell is invertible.
\end{example}

\begin{example}[\coqident{Bicategories.Core.Examples.OneTypes}{one_types}]
\label{ex:one_types}
Let $\U$ be a universe.
The objects of the bicategory $\Onetypes_{\U}$ are 1-truncated types of the universe $\U$, the 1-cells are functions between the underlying types, and the 2-cells are homotopies between functions.
The 1-cells $\id_1(X)$ and $f \cdot g$ are defined as the identity and composition of functions, respectively.
The 2-cell $\id_2(f)$ is $\idpath$, the 2-cell $p \vcomp q$ is the concatenation of paths.
The unitors and associators are defined as identity paths.
Every 2-cell is invertible, and adjoint equivalences from $X$ to $Y$ are the same as equivalences of types from $X$ to $Y$.
\end{example}

\begin{example}[\coqident{Bicategories.Core.Examples.BicatOfUnivCats}{bicat_of_univ_cats}]
\label{ex:cat}
We define the bicategory $\Cat$ of univalent categories as the bicategory whose 0-cells are univalent categories, 1-cells are functors, and 2-cells are natural transformations.
The identity 1-cells are identity functors, the composition and whiskering operations are composition of functors and whiskering of functors and transformations, respectively.
Invertible 2-cells are natural isomorphisms, and adjoint equivalences are external adjoint equivalences of categories.
\end{example}

\begin{example}[\coqident{Bicategories.Core.Examples.OpMorBicat}{op1_bicat}]
\label{ex:op}
Let $\B$ be a bicategory.
Then we define $\op{\B}$ to be the bicategory whose objects are objects in $\B$, 1-cells from $x$ to $y$ are 1-cells $y \onecell x$ in $\B$, and the 2-cells from $f$ to $g$ are 2-cells $f \mytwocell g$ in $\B$.
\end{example}

\begin{defi}[\coqident{Bicategories.DisplayedBicats.Examples.FullSub}{fullsubbicat}]
  \label{ex:fullsub}
  Let $\B$ be a bicategory and $\pred : \B_0 \to \hProp$ a predicate on the 0-cells of $\B$.
  We define the \fat{full subbicategory of $\B$ with 0-cells satisfying $\pred$} as the bicategory whose objects are pairs $(a,p_a) : \Sum (x : \B_0) . \pred (x)$, 1-cells from $(a,p_a)$ to $(b,p_b)$ are 1-cells $a \onecell b$ in $\B$, and 2-cells are as in $\B$.
  In \Cref{ex:dispfullsub} we present a construction of this bicategory using displayed bicategories.
\end{defi}

\begin{example}[\coqident{Bicategories.Core.Examples.Groupoids}{grpds}]
\label{ex:grpd}
We define the bicategory $\Grpd$ as the full subbicategory of $\Cat$ in which every object is a groupoid.
\end{example}

For 1-categories the ``correct'' notion of equality is not isomorphism of categories, but equivalence of categories.
Similarly, the right notion of equality for bicategories is \emph{biequivalence}.
To talk about biequivalences we need to introduce \emph{pseudofunctors}.
\begin{defi}[\coqident{Bicategories.PseudoFunctors.PseudoFunctor}{psfunctor}]
\label{def:psfun}
Let $\B$ and $\C$ be bicategories.
A \fat{pseudofunctor} $F$ from $\B$ to $\C$ consists of
\begin{itemize}
	\item A function $F_0 : \B_0 \rightarrow \C_0$;
	\item For all $a, b : \B_0$, a function $F_1 : \B_1(a, b) \rightarrow \C_1(F_0(a), F_0(b))$;
	\item For all $f, g : \B_1(a, b)$, a function $F_2 : \B_2(f, g) \rightarrow \C_2(F_1(f), F_1(g))$;
	\item For each $a : \B_0$ an invertible 2-cell $\identitor{F}(a) : \id_1(F_0(a)) \mytwocell F_1(\id_1(a))$;
	\item For each $f : \B_1(a, b)$ and $g : \B_1(b, c)$, an invertible 2-cell $\compositor{F}(f, g) : F_1(f) \cdot F_1(g) \mytwocell F_1(f \cdot g)$
\end{itemize}
such that
\[
F_2(\id_2(f)) = \id_2(F_1(f)) \quad \quad F_2(f \vcomp g) = F_2(f) \vcomp F_2(g)
\]
and such that the following diagrams commute
(where all free variables should be taken to be universally quantified):
\[
\xymatrix@C=1.7em
{
	F_1(f) \cdot F_1(g_1) \ar@{=>}[d]_-{F_1(f) \whiskerl F_2(\tc)} \ar@{=>}[rr]^-{\compositor{F}(f,g_1)} & & F_1(f \cdot g_1) \ar@{=>}[d]^{F_2(f \whiskerl \tc)} \\
	F_1(f) \cdot F_1(g_2) \ar@{=>}[rr]_-{\compositor{F}(f,g_2)} & & F_1(f \cdot g_2)
}
\quad
\xymatrix@C=1.7em
{
	F_1(f_1) \cdot F_1(g) \ar@{=>}[d]_-{F_2(\tc) \whiskerr F_1(g)} \ar@{=>}[rr]^-{\compositor{F}(f_1,g)} & & F_1(f_1 \cdot g)) \ar@{=>}[d]^{F_2(\tc \whiskerr g)}\\
	F_1(f_2) \cdot F_1(g) \ar@{=>}[rr]_-{\compositor{F}(f_2,g)} & & F_1(f_2 \cdot g)
}
\]
\[
\xymatrix
{
	\id_1(F_0(a)) \cdot F_1(f) \ar@{=>}[rr]^-{\lunitor(F_1(f))} \ar@{=>}[d]_{\identitor{F}(a) \whiskerr F_1(f)} & &  F_1(f)\\
	F_1(\id_1(a)) \cdot F_1(f) \ar@{=>}[rr]_-{\compositor{F}(\id_1(a), f)} & &F_1(\id_1(a) \cdot f) \ar@{=>}[u]_-{F_2(\lunitor(f))}
}
\]
\[
\xymatrix
{
	F_1(f) \cdot \id_1(F_0(b)) \ar@{=>}[rr]^-{\runitor(F_1(f))} \ar@{=>}[d]_{F_1(f) \whiskerl \identitor{F}(b)} & &  F_1(f)\\
	F_1(f) \cdot F_1(\id_1(b)) \ar@{=>}[rr]_-{\compositor{F}(f, \id_1(b))} & &F_1(f \cdot \id_1(b)) \ar@{=>}[u]_-{F_2(\runitor(f))}
}
\]
\[
\xymatrix
{
	F_1(f) \cdot (F_1(g) \cdot F_1(h)) \ar@{=>}[rrr]^-{\assoc(F_1(f),F_1(g),F_1(h))} \ar@{=>}[d]_{F_1(f) \whiskerl \compositor{F}(g,h)}
		& & & (F_1(f) \cdot F_1(g)) \cdot F_1(h) \ar@{=>}[d]^{\compositor{F}(f,g) \whiskerr F_1(h)} \\
	F_1(f) \cdot F_1(g \cdot h) \ar@{=>}[d]_-{\compositor{F}(f,g \cdot h)} & & & F_1(f \cdot g) \cdot F_1(h) \ar@{=>}[d]^-{\compositor{F}(f \cdot g, h)}\\
	F_1(f \cdot (g \cdot h)) \ar@{=>}[rrr]_-{F_2(\assoc(f,g,h))} & & & F_1((f \cdot g) \cdot h)
}
\]
We write $\pseudoF{\B}{\C}$ for the type of pseudofunctors from $\B$ to $\C$.
\end{defi}

In the remainder of the paper, we sometimes write $F(a)$ instead of $F_0(a)$, and we use the same convention for $F_1$ and $F_2$. We call the 2-cells $\identitor{F}$ and $\compositor{F}$ the \emph{identitor} and \emph{compositor}, respectively.
From each pseudofunctor $F : \pseudoF{\B}{\C}$ we can assemble functors $F_1(a, b) : \homC{\B_1}{a}{b} \rightarrow \homC{\C_1}{F(a)}{F(b)}$ between the hom-categories.

\begin{defi}[\coqident{Bicategories.Transformations.PseudoTransformation}{pstrans}]
\label{def:pstrans}
Let $\B$ and $\C$ be bicategories and $F, G : \pseudoF{\B}{\C}$ pseudofunctors between them.
Then a \fat{pseudotransfomation} $\eta$ from $F$ to $G$ consists of
\begin{itemize}
	\item For each $a : \B_0$ a 1-cell $\eta_0(a) : F_0(a) \onecell G_0(a)$;
	\item For each $a, b : \B_0$ and $f : \B_1(a, b)$, an invertible 2-cell $\eta_1(f) : \eta_0 (a) \cdot G_1(f) \mytwocell F_1(g) \cdot \eta_0(b)$
\end{itemize}
such that the following diagrams commute
\[
\xymatrix
{
	\eta_0(a) \cdot \id_1 \twoar[r]^-{\runitor} \twoar[d]_{\eta_0(a) \whiskerl \identitor{G}(a)} & \eta_0(a) \twoar[r]^{\lunitor^{-1}} & {\id_1} \cdot \eta_0(a) \twoar[d]^{\identitor{F}(a) \whiskerr \eta_0(a)}\\
	\eta_0(a) \cdot G_1(\id_1) \twoar[rr]_-{\eta_1(\id_1(a))} & & F_1(\id_1) \cdot \eta_0(a)
}
\]
\[
\xymatrix
{
	\eta_0(a) \cdot (G_1(f) \cdot G_1(g)) \twoar[rr]^{\assoc} \twoar[dd]_{\eta_0 \whiskerl \compositor{G}}	& & (\eta_0(a) \cdot G_1(f)) \cdot G_1(g) \twoar[d]^{\eta_1(f) \whiskerr G_1(g)} \\
																																			 & & (F_1(f) \cdot \eta_0(b)) \cdot G_1(g) \twoar[d]^{\assoc^{-1}}\\
	\eta_0(a) \cdot G_1(f \cdot g) \twoar[dd]_{\eta_1(f \cdot g)}					 	  									 & & F_1(f) \cdot (\eta_0(b) \cdot G_1(g)) \twoar[d]^{F_1(f) \whiskerl \eta_1(g)}\\
																																			& & F_1(f) \cdot (F_1(g) \cdot \eta_0(c)) \twoar[d]^{\assoc}\\
	F_1(f \cdot g) \cdot \eta_0(c) 																								& & (F_1(f) \cdot F_1(g)) \cdot \eta_0(c) \twoar[ll]_{\compositor{F} \whiskerr \eta_0(c)}
}
\]
We write $\pstrans{F}{G}$ for the type of pseudotransformations from $F$ to $G$.
\end{defi}

\begin{defi}[\coqident{Bicategories.Modifications.Modification}{modification}]
\label{def:modif}
Let $\B$ and $\C$ be bicategories, $F, G : \pseudoF{\B}{\C}$ be pseudofunctors, and $\eta, \theta : \pstrans{F}{G}$ be pseudotransformations.
A \fat{modification} $\modvar$ from $\eta$ to $\theta$ consists of 2-cells $\modvar(a) : \eta(a) \mytwocell \theta(a)$ for each $a : \B$ such that
\[
\xymatrix
{
	\eta(a) \cdot G_1(f) \twoar[r]^{\eta(f)} \twoar[d]_{\modvar(a) \whiskerr G_1(f)} & F_1(f) \cdot \eta(b) \twoar[d]^{F_1(f) \whiskerl \modvar(b)}\\
	\theta(a) \cdot G_1(f) \twoar[r]_{\theta(f)} & F_1(f) \cdot \theta(b)
}
\]
commutes for any $a, b : \B$ and $f : \B_1(a, b)$.
We write $\modif{\eta}{\theta}$ for the type of modifications from $\eta$ to $\theta$.
\end{defi}

To illustrate these three definitions, we look at some examples.

\begin{example}
Let $X$ and $Y$ be 2-types.
\begin{itemize}
	\item (\coqident{Bicategories.PseudoFunctors.Examples.ApFunctor}{ap_psfunctor})
	Each function $f : X \rightarrow Y$ induces a pseudofunctor $\apPS{f} : \pseudoF{\fbg{X}}{\fbg{Y}}$,
	which sends objects $x : X$ to $f(x)$, 1-cells $p : x = y$ to $\ap{f}{p}$, and 2-cells $h : p = q$ to $\ap{(\constfont{ap} \ f)}{h}$.
	\item (\coqident{Bicategories.Transformations.Examples.ApTransformation}{ap_pstrans})
	Suppose we have $f, g : X \rightarrow Y$ and $e : \prod_{x : X} f(x) = g(x)$.
	Then we obtain a pseudotransformation $\apPT{e} : \pstrans{\apPS{f}}{\apPS{g}}$ whose component at $x$ is $e(x)$, and whose actions on 1-cells are given by path induction.
	\item (\coqident{Bicategories.Modifications.Examples.ApModification}{ap_modification})
	Let $f, g : X \rightarrow Y$ and $e_1, e_2 : \prod_{x : X} f(x) = g(x)$.
	Then each family of paths $h : \prod_{x : X} e_1(x) = e_2(x)$ gives rise to a modification $\apMOD{h} : \modif{\apPT{e_1}}{\apPT{e_2}}$ whose component at $x$ is $h(x)$.
\end{itemize}
\end{example}

\begin{example}
\label{ex:psfunpstrans}
We have the following pseudofunctors and pseudotransformations:
\begin{itemize}
	\item (\coqident{Bicategories.PseudoFunctors.Examples.Identity}{id_psfunctor})
	Given a bicategory $\B$, we have the identity pseudofunctor $\id(\B)$ from $\B$ to $\B$.
	Its action on 0-cells, 1-cells, and 2-cells is the identity.
	\item (\coqident{Bicategories.PseudoFunctors.Examples.Composition}{comp_psfunctor})
	Given bicategories $\B_1$, $\B_2$, and $\B_3$ and pseudofunctors $F : \pseudoF{\B_1}{\B_2}$ and $G : \pseudoF{\B_2}{\B_3}$, then we have a pseudofunctor $F \cdot G$ from $\B_1$ to $\B_3$.
        It sends objects $a$ to $G_0(F_0(a))$, 1-cells $f$ to $G_1(F_1(f))$, and 2-cells $\tc$ to $G_2(F_2(\tc))$.
	\item (\coqident{Bicategories.Transformations.PseudoTransformation}{id_pstrans})
	Given bicategories $\B_1$ and $\B_2$ and a pseudofunctor $F : \pseudoF{\B_1}{\B_2}$, we have a pseudotransformation $\id(F)$ from $F$ to $F$.
        It sends objects $a$ to $\id_1(F_1(a))$, and similarly for 1-cells.
	\item (\coqident{Bicategories.Transformations.PseudoTransformation}{comp_pstrans})
	Given bicategories $\B_1$ and $\B_2$, pseudofunctors $F, G, H : \pseudoF{\B_1}{\B_2}$, and two pseudotransformations $\theta_1 : \pstrans{F}{G}$ and $\theta_2 : \pstrans{G}{H}$, we have a pseudotransformation $\eta_1 \vcomp \eta_2 : \pstrans{F}{H}$.
        It sends objects $a$ to $\theta_1(a) \cdot \theta_2(a)$.
\end{itemize}
\end{example}

Note that we have a bicategory $\pseudo(\B, \C)$ of pseudofunctors, pseudotransformations, and modifications.
We construct this bicategory in \Cref{sec:pseudo} using displayed bicategories, and then we define invertible modifications to be invertible 2-cells in this bicategory.
With all this in place, we can define biequivalences.

\begin{defi}[\coqident{Bicategories.PseudoFunctors.Biequivalence}{biequivalence}]
\label{def:biequiv}
Let $\B$ and $\C$ be bicategories.
A \fat{biequivalence} from $\B$ to $\C$ consists of
\begin{itemize}
	\item A pseudofunctor $L : \pseudoF{\B}{\C}$;
	\item A pseudofunctor $R : \pseudoF{\C}{\B}$;
	\item Pseudotransformations $\eta : R \cdot L \mytwocell \id(\C)$ and $\eta_i : \id(\C) \mytwocell R \cdot L$;
	\item Pseudotransformations $\varepsilon : L \cdot R \mytwocell \id(\B)$ ad $\varepsilon_i : \id(\B) \mytwocell L \cdot R$;
	\item Invertible modifications
	\[
	m_1 : \modif{\eta \vcomp \eta_i}{\id} \quad \quad m_2 : \modif{\eta_i \vcomp \eta}{\id} \quad
	m_3 : \modif{\varepsilon \vcomp \varepsilon_i}{\id} \quad \quad m_4 : \modif{\varepsilon_i \vcomp \varepsilon}{\id}
	\]
\end{itemize}
\end{defi}

Usually, the notion of biequivalence is not sufficient, and instead \emph{biadjoint biequivalences} are used.
The latter notion has an extra requirement, namely that $L$ and $R$ form a pseudoadjunction \cite{lack2000coherent}.
Note that this is similar to the situation in types  (see, \eg \cite[Section~4]{hottbook}) and categories (see, \eg \cite[Section~IV.4]{mac2013categories}), where one also considers coherent notions of equivalence.
However, we restrict our attention to biequivalences, because every biequivalence can be refined to a biadjoint biequivalence \cite[Theorem 3.1]{gurski2011biequivalences}.

As an example, we construct a biequivalence between 1-types (\Cref{ex:one_types}) and univalent groupoids (\Cref{ex:grpd}).

\begin{example}[\coqident{Bicategories.PseudoFunctors.Examples.PathGroupoid}{biequiv_path_groupoid}]
\label{ex:biequiv-grpds}
We construct a biequivalence between 1-types and univalent groupoids.
We only show how the involved pseudofunctors are defined.
\begin{itemize}
	\item (\coqident{Bicategories.PseudoFunctors.Examples.PathGroupoid}{path_groupoid}) Define a pseudofunctor $\pathgroupoid : \pseudoF{\Onetypes}{\Grpd}$.
	It sends a 1-type $X$ to the groupoid $\pathgroupoid(X)$ whose objects are $X$ and morphisms from $x$ to $y$ are paths $x = y$.
	\item (\coqident{Bicategories.PseudoFunctors.Examples.PathGroupoid}{objects_of_grpd}) Define a pseudofunctor $\obs : \pseudoF{\Grpd}{\Onetypes}$.
	It sends a groupoid $G$ to the 1-type $\obs(G)$ whose inhabitants are objects of $G$.
	Note that this is a 1-truncated type, because $G$ is univalent. \qedhere
\end{itemize}
\end{example}

\section{Univalent Bicategories}
\label{sec:univalence}
Recall that a \onecategory $\C$ (called `precategory' in \cite{rezk_completion}) is called \emph{univalent}
if, for every two objects $a,b : \C_0$, the function $\idtoiso_{a,b} : (a = b) \to \Iso{a}{b}$
mapping the constant path to the identity isomorphism is an equivalence.
For bicategories, where we have one more layer of structure, univalence can be imposed both \emph{locally} and \emph{globally}.

\begin{defi}[\coqfile{Bicategories.Core}{Univalence}]
  \label{def:univalence}
  Univalence for bicategories is defined as follows:
\begin{enumerate}
\item \label{def-item:local-univalence}
  Let $a, b : \B_0$ and $f, g : \B_1(a,b)$ be objects and morphisms of $\B$;
  by path induction we define a function $\idtoiso^{2,1}_{f,g} : f = g \to \invtwocell(f,g)$ which sends $\idpath(f)$ to $\id_2(f)$.
  A bicategory $\B$ is \fat{locally univalent} if, for every two objects $a,b : \B_0$
  and two 1-cells $f,g : \B_1(a,b)$, the function $\idtoiso^{2,1}_{f,g}$ is an equivalence.
\item \label{def-item:global-univalence} Let $a, b : \B_0$ be objects of $\B$;
  using path induction we define $\idtoiso^{2,0}_{a,b} : a = b \to \AdjEquiv{a}{b}$ sending $\idpath(a)$ to $\id_1(a)$.
  A bicategory $\B$ is \fat{globally univalent} if, for every two objects $a,b : \B_0$,
  the canonical function $\idtoiso^{2,0}_{a,b}$ is an equivalence.
\item (\coqident{Bicategories.Core.Univalence}{is_univalent_2})
  We say that $\B$ is \fat{univalent} if $\B$ is both locally and globally univalent.
\end{enumerate}
\end{defi}

Local univalence can be characterized via the hom-categories.
More precisely, it is equivalent to all hom-categories being univalent.

\begin{proposition}[\coqident{Bicategories.Core.Univalence}{is_univalent_2_1_weq_local_univ}]
A bicategory $\B$ is locally univalent if and only if for every $a, b : \B_0$ the category $\homC{B}{a}{b}$ is univalent.
\end{proposition}

\begin{rem}
If $\B$ and $\C$ are locally univalent and $F$ is a pseudofunctor from $\B$ to $\C$, then the identity and compositions are preserved up to a path instead of just an invertible 2-cell. 
However, this does not mean such pseudofunctors should be considered as strict, because these are not paths between elements of a set.
\end{rem}

Univalent bicategories satisfy a variant of the elimination principle of path induction.
More precisely, there are two such principles: a local one for invertible 2-cells and a global one for adjoint equivalences.
We start with the induction principle associated to invertible 2-cells:

\begin{proposition}[\coqident{Bicategories.Core.Univalence}{J_2_1}]
\label{prop:J_local}
Let $\B$ be a locally univalent bicategory.
Given a type family $Y$ and a function $y$ with types
\[
Y : \Prod (a , b : \B_0) . \Prod (f , g : \B_1(a , b)) . \invtwocell(f,g) \to \U
\qquad \quad
y : \Prod (a , b : \B_0) . \Prod (f : \B_1(a , b)). Y (a,b,f,f,\id_2(f)),
\]
there is a function
\[
\J_{2,1} (Y,y) : \Prod (a , b : \B_0) . \Prod (f , g : \B_1(a , b)) . \Prod (\theta : \invtwocell(f,g)) . Y (a,b,f,g,\theta)
\]
such that $\J_{2,1}(Y,y,a , b , f , f , \id_2(f)) = y(a , b , f)$.
\end{proposition}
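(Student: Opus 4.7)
The plan is to reduce the statement to ordinary path induction by using local univalence to replace invertible 2-cells with equalities between 1-cells. Given $Y$ and $y$ as in the statement, I would proceed in three main steps.

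First, I would define an auxiliary family indexed by paths rather than invertible 2-cells:
\[
Y' : \Prod (a,b : \B_0) . \Prod (f,g : \B_1(a,b)) . (f = g) \to \U, \quad Y'(a,b,f,g,p) \eqdef Y(a,b,f,g, \idtoiso^{2,1}_{f,g}(p)).
\]
Using ordinary path induction on $p$, from $y$ I would construct a section $s$ of $Y'$ satisfying $s(a,b,f,f,\idpath(f)) \equiv y(a,b,f)$; the definitional equality holds because $\idtoiso^{2,1}_{f,f}(\idpath(f))$ reduces to $\id_2(f)$ by the path-induction definition of $\idtoiso^{2,1}$.

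Second, I would invoke local univalence: for each $a,b,f,g$, the map $\idtoiso^{2,1}_{f,g}$ is an equivalence with some inverse $\psi_{f,g}$ and homotopy $\eta_{f,g}(\tc) : \idtoiso^{2,1}_{f,g}(\psi_{f,g}(\tc)) = \tc$. I would then set
\[
\J_{2,1}(Y,y,a,b,f,g,\tc) \eqdef \transp{\eta_{f,g}(\tc)}\bigl(s(a,b,f,g,\psi_{f,g}(\tc))\bigr),
\]
where the transport is in the family $\tcB \mapsto Y(a,b,f,g,\tcB)$. By construction this gives a term in $Y(a,b,f,g,\tc)$ for every invertible 2-cell $\tc$.

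Finally, I would verify the computation rule at $\tc = \id_2(f)$. Unfolding the definition, it suffices to show that $s(a,b,f,f,\psi_{f,f}(\id_2(f)))$ transported along $\eta_{f,f}(\id_2(f))$ equals $s(a,b,f,f,\idpath(f))$, which is definitionally $y(a,b,f)$. Since $\idtoiso^{2,1}_{f,f}(\idpath(f))$ reduces to $\id_2(f)$, the value $\psi_{f,f}(\id_2(f))$ is propositionally equal to $\idpath(f)$ via a path whose image under $\idtoiso^{2,1}$ matches $\eta_{f,f}(\id_2(f))$; transporting $s$ along this path then gives the desired equality. I expect the main obstacle to be exactly this coherence check: following the standard pattern of equivalence induction in univalent type theory, the $\J$-style computation rule holds only propositionally, and verifying it requires carefully combining the naturality of $s$-transport with the triangle identity relating $\eta$ to the unit of the equivalence $\idtoiso^{2,1}$, rather than obtaining it as a judgmental reduction.
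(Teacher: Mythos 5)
Your proposal is correct and follows essentially the same route as the paper's formalized proof: use the inverse of $\idtoiso^{2,1}_{f,g}$ supplied by local univalence to turn the invertible 2-cell into a path, apply ordinary path induction, and transport the result back along the homotopy witnessing that $\idtoiso^{2,1}$ is an equivalence, which is why the computation rule holds only propositionally. The final coherence check you worry about is in fact automatic here: since 2-cells form a set and being invertible is a proposition, $\invtwocell(f,f)$ is a set, so the two parallel paths $\ap{\idtoiso^{2,1}}{\epsilon_{f,f}(\idpath(f))}$ and $\eta_{f,f}(\id_2(f))$ agree without any half-adjoint improvement of the equivalence.
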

In particular, in order to prove a predicate over all invertible 2-cells in a given locally univalent bicategory, it suffices to prove it for all identity 2-cells.

Next, we present the induction principle associated to adjoint equivalences:
\begin{proposition}[\coqident{Bicategories.Core.Univalence}{J_2_0}]
\label{prop:J_global}
Let $\B$ be a globally univalent bicategory. Given a type family $Y$ and a function $y$ with types
\[
Y : \Prod (a , b : \B_0) . a \adjequiv b \to \U
\qquad \qquad \qquad
y : \Prod (a : \B_0) . Y (a,a,\id_1(a)),
\]
there is a function
\[
\J_{2,0} (Y,y) : \Prod (a , b : \B_0) . \Prod (f : a \adjequiv b) . Y (a,b,f)
\]
such that $\J_{2,0} (Y,y,a , a , \id_1(a)) = y(a)$.
\end{proposition}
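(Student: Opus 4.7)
The plan is to reduce the claim to ordinary path induction by transporting along the equivalence $\idtoiso^{2,0}_{a,b}$ supplied by global univalence. This mirrors the proof of \Cref{prop:J_local} and, more broadly, the way the univalence axiom for types yields an induction principle for type equivalences.

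First I would define an auxiliary type family on identities,
\[
Y' : \Prod (a , b : \B_0). (a = b) \to \U,
\qquad
Y'(a,b,p) \defeq Y\bigl(a, b, \idtoiso^{2,0}_{a,b}(p)\bigr).
\]
Since $\idtoiso^{2,0}_{a,a}(\idpath(a))$ unfolds to $\id_1(a)$ by the defining clause of $\idtoiso^{2,0}$, the hypothesis $y$ provides an inhabitant $y(a) : Y'(a,a,\idpath(a))$. Ordinary path induction then yields a function
\[
\tilde{\J} : \Prod (a , b : \B_0). \Prod (p : a = b). Y'(a,b,p)
\]
with $\tilde{\J}(a,a,\idpath(a)) \equiv y(a)$.

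To build $\J_{2,0}(Y,y,a,b,f)$, I would set $p \defeq (\idtoiso^{2,0}_{a,b})^{-1}(f)$, well-defined because global univalence makes $\idtoiso^{2,0}_{a,b}$ an equivalence. The section side of this equivalence yields a path $q : \idtoiso^{2,0}_{a,b}(p) = f$, and transporting $\tilde{\J}(a,b,p)$ along $q$ gives the required element of $Y(a,b,f)$.

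For the computation rule at $f \defeq \id_1(a)$, the equivalence property of $\idtoiso^{2,0}_{a,a}$ makes the preimage $(\idtoiso^{2,0}_{a,a})^{-1}(\id_1(a))$ agree with $\idpath(a)$ and forces the accompanying section path $q$ to be compatible with reflexivity, so the final transport is the identity and the result reduces to $y(a)$. The main subtlety, and essentially the only non-routine point, is that these agreements hold only propositionally: one must thread the coherence between the section and retraction of $\idtoiso^{2,0}_{a,a}$ to make the transport along $q$ cancel cleanly. In the formalisation this is handled by the standard equivalence-induction lemma applied to $\idtoiso^{2,0}_{a,b}$, exactly as in the proof of \Cref{prop:J_local}.
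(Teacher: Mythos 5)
Your proof is correct and follows the standard route that the paper's formalization takes: precompose $Y$ with $\idtoiso^{2,0}$ to reduce to ordinary path induction, then transfer back along the inverse of the equivalence, with the computation rule holding only propositionally via the section--retraction coherence. This matches the paper, which states the proposition without a written proof but explicitly remarks that the computation rule holds only up to propositional equality.
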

In particular, in order to prove a predicate over all adjoint equivalences in a given globally univalent bicategory, it suffices to prove it for all identity 1-cells.
Notice that in both induction principles the computation rules hold only up to propositional equality.
Next, we present some usage examples of how to use \Cref{prop:J_local,prop:J_global}.
The constructions described in \Cref{ex:comp_adjequiv} and \Cref{ex:pseudo_preserve_adjequiv} work for arbitrary bicategories, not just globally/locally univalent ones. Nevertheless, these constructions are considerably simpler if the involved bicategories satisfy certain univalence assumptions. 

\begin{example}[\coqident{Bicategories.Core.Univalence}{comp_adjoint_equivalence}]
\label{ex:comp_adjequiv}  
In a globally univalent bicategory $\B$, sequential composition of
adjoint equivalences can be defined in a way that resembles the
construction of composition of paths.  Consider the type family $Y (a
, b , f) \defeq \Prod (c : \B_0) . b \adjequiv c \to a \adjequiv c$
and the function $y(a) \defeq \Lam (c : \B_0) (f : a \adjequiv c)
. f$.  The composition of $f : a \adjequiv b$ and $g : b \adjequiv c$
is given by
\[
f \cdot_{\adjequiv} g \defeq \J_{2,0}(Y,y,a,b,f,c,g). 
\]
\end{example}
\begin{example}[\coqident{Bicategories.Core.Univalence}{left_adjequiv_invertible_2cell}]
\label{ex:adjequiv_invtwocell}  
Let $\B$ be a bicategory, $f,g : \B_1(a,b)$ and $\theta :
\invtwocell(f,g)$. If $f$ is an adjoint equivalence, then $g$ is an
adjoint equivalence as well.  While this result generally holds in any
bicategory $\B$, it is particularly simple to prove when $\B$ is
locally univalent. Applying \Cref{prop:J_local}, we are
left to prove the statement with $\theta$ as the identity 2-cell.
In that statement, $f$ and $g$ are definitionally equal, and hence the statement is trivially true.
\end{example}
\begin{proposition}
Every pseudofunctor $F : \pseudoF{\B}{\C}$ preserves adjoint
equivalences, that is, if $f : a \adjequiv b$ in $\B$, then $F_1(f) :
F_0(a) \adjequiv F_0(b)$ in $\C$.
\end{proposition}
\begin{proof}
  Lengthy but straightforward.
\end{proof}
If $\B$ is globally univalent and $\C$ is locally univalent, the above statement can be proved very easily.
\begin{proposition}[\coqident{Bicategories.PseudoFunctors.PseudoFunctor}{psfunctor_preserves_adjequiv}]
\label{ex:pseudo_preserve_adjequiv}  
If $\B$ is globally univalent and $\C$ is locally univalent, then every pseudofunctor $F : \pseudoF{\B}{\C}$ preserves adjoint
equivalences.
\end{proposition}
\begin{proof}
Applying \Cref{prop:J_global} on $f$, we are left
to prove that $F_1(\id_1(a))$ is an adjoint equivalence.  Since $F$ is
a pseudofunctor, there exists an invertible 2-cell $\identitor{F}(a) :
\id_1(F_0(a)) \mytwocell F_1(\id_1(a))$. Therefore, by 
\Cref{ex:adjequiv_invtwocell} and the fact that $\id_1(F_0(a))$ is an
adjoint equivalence, we conclude that $F_1(\id_1(a))$ is an adjoint
equivalence as well.
\end{proof}

Another consequence is that biequivalences between univalent bicategories gives rise to equivalences on the level of objects.

\begin{proposition}[\coqident{Bicategories.PseudoFunctors.Biequivalence}{biequivalence_to_object_equivalence}]
Given univalent bicategories $\B$ and $\C$, and a biequivalence $F$ from $\B$ to $\C$, then we get an equivalence of types $F_0 : \B_0 \simeq \C_0$.
\end{proposition}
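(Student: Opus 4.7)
The plan is to use global univalence of $\B$ and $\C$ to turn the object-level coherent equivalences witnessed by the biequivalence into honest paths, which together present $F_0 = L_0$ as a quasi-inverse pair with $R_0 : \C_0 \to \B_0$. Concretely, it suffices to produce, for each $a : \B_0$, a path $R(L(a)) = a$ in $\B_0$, and for each $c : \C_0$, a path $L(R(c)) = c$ in $\C_0$; these constitute the two homotopies required for a type-theoretic equivalence.

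The key step is extracting an adjoint equivalence from the biequivalence data at each object. Fix $a : \B_0$. The pseudotransformations $\varepsilon$ and $\varepsilon_i$ from \Cref{def:biequiv} yield 1-cells $\varepsilon(a) : R(L(a)) \onecell a$ and $\varepsilon_i(a) : a \onecell R(L(a))$ in $\B$. Evaluating the invertible modifications $m_3$ and $m_4$ at $a$ produces invertible 2-cells witnessing that $\varepsilon(a) \cdot \varepsilon_i(a)$ and $\varepsilon_i(a) \cdot \varepsilon(a)$ are identity 1-cells, up to the unitors and structure 2-cells that appear when unfolding the composite pseudotransformations and the identity pseudotransformation. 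This exhibits $\varepsilon(a)$ as an equivalence in $\B$, which can be upgraded to an adjoint equivalence $R(L(a)) \adjequiv a$ by the standard procedure of modifying one of the 2-cells so that the triangle identities of \Cref{def:adjequiv} hold. Global univalence of $\B$ then delivers the desired path $R(L(a)) = a$, and a symmetric argument with $\eta$, $\eta_i$, $m_1$, $m_2$ and global univalence of $\C$ supplies $L(R(c)) = c$.

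The main obstacle is this extraction step: getting honest adjoint equivalences from pseudotransformations equipped with invertible modifications requires carefully tracking the unitors and compositors introduced in \Cref{def:pstrans} and then applying the standard trick to produce a structure satisfying the bicategorical triangle identities. All of the needed equalities are implied by the laws in \Cref{def:pstrans} and \Cref{def:modif}, but they must be assembled by hand. Once the adjoint equivalences are in place, the two invocations of global univalence and the packaging of the resulting pointwise paths into an equivalence of types are routine.
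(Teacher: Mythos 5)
Your proposal is correct and takes essentially the same route as the paper's (formalized) proof: extract, for each object, an adjoint equivalence from the components of the unit/counit pseudotransformations together with the invertible modifications, apply global univalence of $\B$ and $\C$ to obtain paths $R(L(a)) = a$ and $L(R(c)) = c$, and conclude that the object maps are quasi-inverses. The only labor you flag --- promoting an internal equivalence to an adjoint equivalence and tracking the unitors hidden in the composite pseudotransformations --- is exactly the labor the formalization performs, so there is no gap.
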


While right adjoints are only unique up to isomorphism in general, they are unique up to identity if the bicategory is locally univalent:
\begin{proposition}[\coqident{Bicategories.Core.AdjointUnique}{isaprop_left_adjoint_equivalence}]
\label{prop:adjunique}
Let $\B$ be locally univalent.
Then having an adjoint equivalence structure on a 1-cell in $\B$ is a proposition.
\end{proposition}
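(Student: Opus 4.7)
The plan is to show that the type of adjoint equivalence structures on $f$ is contractible whenever it is inhabited, which is equivalent to being a proposition. So assume a structure $(g, \eta, \varepsilon)$ on $f$ is given, and take an arbitrary other structure $(g', \eta', \varepsilon')$; I must produce an equality between them in the total space of structures.

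First I would note the propositional parts of the structure: invertibility of a 2-cell is a proposition (as stated after \Cref{def:bicat}), and the triangle identities are equations of 2-cells, hence propositions because 2-cells form a set. Thus, up to propositional content, a structure is a triple $(g, \eta, \varepsilon)$ consisting of a 1-cell $g : b \onecell a$, a 2-cell $\eta : \id_1(a) \twocell f \cdot g$, and a 2-cell $\varepsilon : g \cdot f \twocell \id_1(b)$. It suffices to exhibit an equality $p : g = g'$ together with equalities relating $\eta$ to $\eta'$ and $\varepsilon$ to $\varepsilon'$ after transport along $p$.

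Next, I would build the classical invertible 2-cell $\alpha : g \twocell g'$ as the composite
\[
g \xRightarrow{\runitor(g)^{-1}} g \cdot \id_1(b) \xRightarrow{g \whiskerl \eta'} g \cdot (f \cdot g') \xRightarrow{\assoc(g,f,g')} (g \cdot f) \cdot g' \xRightarrow{\varepsilon \whiskerr g'} \id_1(a) \cdot g' \xRightarrow{\lunitor(g')} g',
\]
which is invertible as a composite of invertible 2-cells. Applying local univalence, the map $\idtoiso^{2,1}_{g,g'}$ is an equivalence, and so $\alpha$ transports back to an equality $p : g = g'$.

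Finally, I would verify that transporting $\eta$ and $\varepsilon$ along $p$ yields $\eta'$ and $\varepsilon'$. Since 2-cells form a set, these are propositional obligations; they reduce, by naturality of $\idtoiso^{2,1}$ with respect to left and right whiskering, to diagrammatic identities expressing that $\alpha$ intertwines $\eta$ with $\eta'$ (and $\varepsilon$ with $\varepsilon'$). Those identities, in turn, are consequences of the two triangle diagrams in \Cref{def:adjequiv} applied simultaneously to $(g,\eta,\varepsilon)$ and $(g',\eta',\varepsilon')$. The main obstacle is precisely this last bookkeeping: the two triangle identities of each structure must be combined with the definition of $\alpha$ in a coherent pair of 2-cell calculations. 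Each diagram chase is routine but somewhat long; the calculations are of the same nature as those encountered when proving that the right adjoint in an adjoint equivalence is determined up to unique invertible 2-cell by its left adjoint.
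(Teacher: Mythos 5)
Your proposal is correct and follows essentially the same route as the paper's (formalized) proof, which the paper does not spell out in the text: reduce to the non-propositional data $(g,\eta,\varepsilon)$, build the canonical invertible comparison 2-cell between the two right adjoints, turn it into a path via local univalence, and discharge the transported unit/counit equations using the set-ness of 2-cells and the triangle identities. Only a cosmetic slip: with the paper's composition order the zig-zag should read $g \twocell g\cdot\id_1(a)$ and $(g\cdot f)\cdot g' \twocell \id_1(b)\cdot g'$, i.e.\ the identity objects in your display are swapped.
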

As a consequence of this proposition we get the following:
\begin{theorem}
 In a univalent bicategory $\B$,
\begin{itemize}
 \item (\coqident{Bicategories.Core.AdjointUnique}{univalent_bicategory_0_cell_hlevel_4})
       the type $\B_0$ of 0-cells is a 2-type.
 \item (\coqident{Bicategories.Core.Univalence}{univalent_bicategory_1_cell_hlevel_3})
       for any two objects $a, b : \B_0$, the type $a \onecell b$ of 1-cells from $a$ to $b$ is a 1-type.
\end{itemize}
\end{theorem}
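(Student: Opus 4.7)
The plan is to prove both statements by transporting the truncation level along the equivalences provided by univalence, with the second (local) statement feeding into the first (global) one. Recall that a type $T$ is an $n$-type iff all its identity types $x =_T y$ are $(n-1)$-types, so it suffices to control the identity types.

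For the statement about 1-cells, I would fix $a, b : \B_0$ and two 1-cells $f, g : a \onecell b$. Local univalence of $\B$ gives an equivalence $(f = g) \simeq \invtwocell(f,g)$. By Definition~\ref{def:bicat}, the type of 2-cells $\B_2(f,g)$ is a set, and being invertible is a proposition (uniqueness of inverses, as noted right after the definition of invertible 2-cell). Hence $\invtwocell(f,g)$ is a subtype of a set, which is itself a set, and transporting this along the equivalence shows $(f = g)$ is a set. Therefore $a \onecell b$ is a 1-type.

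For the statement about 0-cells, I would fix $a, b : \B_0$ and show that $a = b$ is a 1-type, which is exactly what is needed for $\B_0$ to be a 2-type. Global univalence gives an equivalence $(a = b) \simeq \AdjEquiv{a}{b}$, so it suffices to show that $\AdjEquiv{a}{b}$ is a 1-type. An adjoint equivalence is a pair of a 1-cell $f : a \onecell b$ together with an adjoint equivalence structure; by \Cref{prop:adjunique}, in a locally univalent bicategory the adjoint equivalence structure on a 1-cell is a proposition. Thus $\AdjEquiv{a}{b}$ is equivalent to a subtype of $a \onecell b$. Since we have just shown $a \onecell b$ is a 1-type, and a subtype of a 1-type is a 1-type, we conclude $\AdjEquiv{a}{b}$ is a 1-type, hence so is $a = b$.

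The only nontrivial ingredient is \Cref{prop:adjunique}, which has already been established; beyond that, the proof is a bookkeeping exercise about how truncation levels behave under equivalence, $\Sigma$-types with propositional second component, and subtypes of sets. No step looks genuinely difficult, though the logical dependency order matters: the 1-cell statement must be proved first, because the 0-cell statement uses it to bound the complexity of $\AdjEquiv{a}{b}$.
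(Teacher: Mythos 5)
Your proposal is correct and follows essentially the same route as the paper, which states this theorem precisely as a consequence of \Cref{prop:adjunique}: local univalence plus set-truncation of 2-cells gives the 1-cell statement, and global univalence plus propositionality of the adjoint-equivalence structure reduces the 0-cell statement to the 1-cell one. The ordering you identify (1-cells first, then 0-cells) is indeed the necessary dependency.
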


\Cref{prop:adjunique} has another important use:
to prove global univalence of a bicategory, we need to show that $\idtoiso^{2,0}_{a,b}$ is an equivalence.
Often we do that by constructing a function in the other direction and showing these two are inverses.
This requires comparing adjoint equivalences, which is done with the help of \Cref{prop:adjunique}.

Local univalence is also relevant when one discusses bicategorical analogues of limits and colimits.
To exemplify this, we look at biinitial objects, and we note that a similar discussion can be given for bifinal objects (\coqident{Bicategories.Colimits.Initial}{bifinal_unique}).
We start by defining \emph{biinitiality structures}.

\begin{defi}[\coqident{Bicategories.Colimits.Initial}{is_biinitial}]
\label{def:biinitial}
Let $\B$ be a bicategory and let $a$ be an object in $\B$.
Then a \fat{biinitiality structure} on $a$ consists of an external adjoint equivalence structure on the canonical functor from $\homC{B}{a}{b}$ to the unit category for each $b : \B$.
A \fat{biinitial object} is an object $a : \B$ together with a biinitiality structure on $a$.
\end{defi}

In general, adjoint equivalence structures are not necessarily unique, but they are if the bicategory is locally univalent.
As such, having a biinitiality structure is not necessarily a proposition, and instead, it should be viewed as a structure on the objects.
If the bicategory is locally univalent, however, then we can use \Cref{prop:adjunique} to show that biinitiality structures form a proposition.

\begin{proposition}[\coqident{Bicategories.Colimits.Initial}{isaprop_is_biinitial}]
Let $\B$ be a locally univalent bicategory.
Then for each $a : \B$ the type of biinitiality structures on $a$ is a proposition.
\end{proposition}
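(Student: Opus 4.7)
The plan is to unfold the definition of biinitiality structure on $a$ as a dependent function $b \mapsto \xi_b$, where $\xi_b$ is an adjoint equivalence structure on the canonical functor $F_b : \homC{\B_1}{a}{b} \to \mathbf{1}$ in the bicategory $\Cat$ of univalent categories. Since a dependent product of propositions is a proposition, it suffices to show, for each $b : \B$, that the type of adjoint equivalence structures on $F_b$ is a proposition. Note that $F_b$ lives in $\Cat$ because $\homC{\B_1}{a}{b}$ is a univalent category by local univalence of $\B$ (Definition~\ref{def:univalence}), and the unit category is trivially univalent; moreover, $\Cat$ is itself locally univalent, as its hom-categories are functor categories with univalent targets.

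The problem thus reduces to the general fact that, in any locally univalent bicategory, the type of adjoint equivalence structures on a fixed 1-cell is a proposition. To prove this, I would start from two adjoint equivalence structures $(g, \eta, \varepsilon)$ and $(g', \eta', \varepsilon')$ on the same $f$ and construct a canonical invertible 2-cell $g \twocell g'$ by pasting $\eta'$ with $\varepsilon$ (together with the appropriate associators and unitors from Definition~\ref{def:bicat}). Local univalence, applied via Proposition~\ref{prop:J_local}, converts this invertible 2-cell into an identification $g = g'$. Once the right adjoint is fixed, the remaining data—units, counits, and witnesses of the triangle identities of Definition~\ref{def:adjequiv}—all live in sets or propositions (the triangle identities are equalities of 2-cells, which are propositions because 2-cells form a set), so the two structures coincide.

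The main obstacle is the construction of the invertible 2-cell between the right adjoints together with the coherent use of the triangle identities to verify that transporting one unit/counit along the identification $g = g'$ yields the other. Once that lemma is in hand, the proposition follows immediately by the reduction above: instantiate at $F_b$ for each $b : \B$ and conclude by dependent-product-of-propositions. No further bicategorical coherence is needed, since the unit category and its adjoint equivalences with $\homC{\B_1}{a}{b}$ only involve invertible 2-cells, which are precisely the objects to which Proposition~\ref{prop:J_local} applies.
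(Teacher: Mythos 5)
Your proposal takes essentially the same route as the paper: reduce to the fact that adjoint equivalence structures on a fixed 1-cell in a locally univalent bicategory form a proposition (the paper's Proposition~\ref{prop:adjunique}, applied in \Cat after using local univalence of $\B$ to ensure $\homC{\B_1}{a}{b}$ is a univalent category), and then close under dependent products of propositions. One small caution on your sketch of that key lemma: the observation that the remaining data ``live in sets'' does not by itself yield propositionality of the unit and counit components --- the transport comparison along $g = g'$ via the triangle identities, which you flag as the main obstacle, is exactly where the real work lies --- but you correctly identify this, so the argument is sound.
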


While local univalence affects the uniqueness of biinitiality structures, global univalence affects the uniqueness of biinitial objects.
Since limits and colimits are unique up adjoint equivalence, the type of biinitial objects is a proposition if the bicategory is univalent.

\begin{proposition}[\coqident{Bicategories.Colimits.Initial}{biinitial_unique}]
Let $\B$ be a univalent bicategory.
Then the type of biinitial objects in $\B$ is a proposition.
\end{proposition}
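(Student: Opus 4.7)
The plan is to combine the preceding proposition with global univalence of $\B$. The type of biinitial objects is a sigma over $\B_0$ whose second component (a biinitiality structure on the underlying 0-cell) is, by the previous proposition, propositional. It therefore suffices to show that any two biinitial objects $(a, s_a)$ and $(b, s_b)$ satisfy $a = b$. Since $\B$ is globally univalent, an identification $a = b$ is equivalent to an adjoint equivalence $a \adjequiv b$, so the task reduces to constructing one.

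First I would extract, from each biinitiality structure, some consequences about hom-categories of $\B$. For a biinitial $x$ and arbitrary $y : \B_0$, the canonical functor from $\homC{\B_1}{x}{y}$ to the unit category is an adjoint equivalence of 1-categories; this yields three facts I will use: there exists a 1-cell $x \onecell y$ (the image under the right adjoint of the unique object of the unit category); any two parallel 1-cells $f, g : x \onecell y$ are connected by an invertible 2-cell (produced from the unit of the adjoint equivalence at $f$ and $g$); and any two parallel 2-cells in $\homC{\B_1}{x}{y}$ coincide (because an adjoint equivalence of 1-categories is fully faithful and the unit category has singleton hom-sets).

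Using the first two facts, I would then choose 1-cells $f : a \onecell b$ (from biinitiality of $a$ at $b$) and $g : b \onecell a$ (from biinitiality of $b$ at $a$), together with invertible 2-cells $\eta : \id_1(a) \twocell f \cdot g$ (from biinitiality of $a$ at $a$, applied to the parallel 1-cells $\id_1(a)$ and $f \cdot g$ in $\homC{\B_1}{a}{a}$) and $\varepsilon : g \cdot f \twocell \id_1(b)$ (symmetrically). The two triangle laws of \Cref{def:adjequiv} are equalities of parallel 2-cells in $\homC{\B_1}{a}{b}$ and $\homC{\B_1}{b}{a}$, both of which hold automatically by the third fact. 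This produces the required adjoint equivalence $a \adjequiv b$.

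The main obstacle is the bookkeeping needed to extract the third fact, namely that parallel 2-cells in a biinitial hom-category must be equal; this relies on the adjoint equivalence to the unit category being fully faithful on 2-cells, which is a general property of adjoint equivalences of 1-categories. Once that groundwork is in place, the triangle identities are for free and the construction of $a \adjequiv b$, and hence the identification $a = b$, is immediate.
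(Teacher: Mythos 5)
Your proposal is correct and follows essentially the same route the paper intends: reduce to equality of the underlying 0-cells via the preceding proposition on biinitiality structures, use global univalence to turn the goal into constructing an adjoint equivalence, and obtain that equivalence from the standard fact that hom-categories out of a biinitial object are equivalent to the unit category (so they are inhabited, any two parallel 1-cells are invertibly 2-isomorphic, and parallel 2-cells coincide, making the triangle identities automatic). The paper does not spell these details out, but your filling-in is accurate and complete.
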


Before we discuss examples of biinitial objects, we give an equivalent definition of biinitiality formulated using universal mapping properties.

\begin{lemma}[\coqident{Bicategories.Colimits.Initial}{biinitial_weq_biinitial'}]
Let $\B$ be a bicategory and let $a$ be an object in $\B$.
Then $a$ has a biinitiality structure if and only if the following holds:
\begin{itemize}
	\item for every $b$ there is a 1-cell $a \onecell b$;
	\item for every two 1-cells $f, g : a \onecell b$ there is a unique 2-cell $f \mytwocell g$.
\end{itemize}
\end{lemma}

\begin{example}
Note that both $\Onetypes$ and $\Cat$ have a biinitial object.
\begin{itemize}
	\item (\coqident{Bicategories.Colimits.Initial}{biinitial_1_types}) The empty type is a biinitial object in $\Onetypes$.
	\item (\coqident{Bicategories.Colimits.Initial}{biinitial_cats}) The empty category is a biinitial object in $\Cat$.
\end{itemize}
\end{example}

Now let us prove that some examples from \Cref{sec:bicategories} are univalent.
\begin{example}
  The following bicategories are univalent:
  \label{ex:univalent}
  \begin{enumerate}
  \item (\coqfile{Bicategories.Core.Examples}{TwoType},
    \Cref{ex:bigroupoid} cont'd) The fundamental bigroupoid of each 2-type is univalent.
  \item (\coqfile{Bicategories.Core.Examples}{OneTypes}, \Cref{ex:one_types} cont'd)
    \label{ex:one_types:univalent}
 The bicategory of 1-types of a universe $\U$ is locally univalent;
 this is a consequence of function extensionality.
 If we assume the univalence axiom for $\U$, then 1-types form a univalent bicategory.
 To show that, we factor $\idtoiso^{2,0}$ as follows.
 \[
 \xymatrix
 {
 	X = Y \ar[rr]^-{\idtoiso^{2,0}_{X,Y}} \ar[dr]_-{\simeq} & & \AdjEquiv{X}{Y} \\
 	& X \simeq Y \ar[ur]_-{\simeq} &
 }
 \]
 The left function is an equivalence by univalence, and the right function is an equivalence by the characterization of adjoint equivalences in \Cref{ex:one_types}.
 The fact that this diagram commutes follows from \Cref{prop:adjunique}.
 \item (\coqfile{Bicategories.DisplayedBicats.Examples}{FullSub}, If $\B$ is univalent and $P$ is a predicate on $\B$, then so is the full subbicategory of $\B$ with those objects satisfying $P$. \label{ex:fullsub:univalent}
  \end{enumerate}
\end{example}
It is more difficult to prove that the bicategory of univalent categories is univalent, and we only give a brief sketch of this proof.
\begin{proposition}[\coqfile{Bicategories.Core.Examples}{BicatOfUnivCats}, \Cref{ex:cat} cont'd]
 The bicategory $\Cat$ is univalent.
\end{proposition}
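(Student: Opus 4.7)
The plan is to establish local and global univalence of $\Cat$ separately, following the template of \Cref{ex:univalent}.\ref{ex:one_types:univalent}.

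For local univalence, I must show that for any univalent categories $\C, \D$ and any two functors $F, G : \mor{\C}{\D}$, the map $\idtoiso^{2,1}_{F,G}$ from $F = G$ to $\invtwocell(F,G)$ is an equivalence. Since invertible 2-cells in $\Cat$ are precisely natural isomorphisms, this reduces to the classical fact from 1-categorical univalent mathematics that the functor category $[\C,\D]$ is a univalent 1-category whenever $\D$ is. An inverse to $\idtoiso^{2,1}$ is built by applying univalence of $\D$ pointwise to the components of a natural isomorphism and assembling the resulting family of identities using function extensionality; checking that the two maps are mutually inverse is routine once one has recorded the interaction of $\idtoiso$ with identity, composition, and transport of components of functors.

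For global univalence, I must show that $\idtoiso^{2,0}_{\C,\D} : (\C = \D) \to \AdjEquiv{\C}{\D}$ is an equivalence for univalent $\C, \D$. I would factor this map through the type $\C \adjequiv_{\text{1-cat}} \D$ of equivalences of univalent 1-categories in the sense of \cite{rezk_completion}. The first leg $(\C = \D) \to (\C \adjequiv_{\text{1-cat}} \D)$ is an equivalence by the main structure-identity result of \cite{rezk_completion}, which states precisely that identity of univalent 1-categories is equivalent to equivalence. The second leg $(\C \adjequiv_{\text{1-cat}} \D) \to \AdjEquiv{\C}{\D}$ is an equivalence because an adjoint equivalence in $\Cat$ unfolds, modulo repackaging of the unitors and associators of $\Cat$ (which in this bicategory are built from identity natural transformations), into exactly the data of a 1-categorical adjoint equivalence: a pseudo-inverse functor together with unit and counit natural isomorphisms satisfying the triangle identities.

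The main obstacle is verifying that the factorization above actually commutes, i.e.\ that composing these two equivalences yields the canonical map $\idtoiso^{2,0}$ up to homotopy. Here I would invoke \Cref{prop:adjunique}: since local univalence of $\Cat$ has already been proved in the first step, having an adjoint equivalence structure on a given functor is a proposition, so two inhabitants of $\AdjEquiv{\C}{\D}$ are equal as soon as their underlying functors coincide. Thus the coherence check reduces to the level of 1-cells, and by path induction on $\C = \D$ it suffices to observe that both routes send $\idpath(\C)$ to the identity functor $\id_1(\C)$, which holds by definition.
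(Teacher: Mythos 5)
Your proposal is correct and follows essentially the same route as the paper: local univalence via univalence of the functor category $[C,D]$ for univalent $D$, and global univalence by factoring $\idtoiso^{2,0}$ through the identity-equals-equivalence result of Ahrens--Kapulkin--Shulman, with \Cref{prop:adjunique} used to check that the factorization commutes. The only (inessential) difference is the intermediate type: the paper factors through the type $\CatIso{C}{D}$ of category isomorphisms rather than directly through 1-categorical adjoint equivalences.
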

Local univalence follows from the fact that the functor category $[C,D]$ is univalent if $D$ is.
For global univalence, we use that the type of identities on categories is equivalent to the type of adjoint equivalences between categories \cite[Theorem~6.17]{rezk_completion}.
The proof proceeds by factoring $\idtoiso^{2,0}$ as a chain of equivalences $(C = D) \xrightarrow{\sim} \CatIso{C}{D} \xrightarrow{\sim} \AdjEquiv{C}{D}$.
To our knowledge, a proof of global univalence was first computer-formalized by Rafa\"el Bocquet%
\footnote{\url{https://github.com/mortberg/cubicaltt/blob/master/examples/category.ctt}}.

In the previous examples, we proved univalence directly.
However, in many complicated bicategories such proofs are not feasible.
An example of such a bicategory is the bicategory $\pseudo(\B,\C)$ of pseudofunctors from $\B$ to $\C$, pseudotransformations, and modifications \cite{leinster:basic-bicats} (for a univalent bicategory $\C$).
Even in the 1-categorical case, proving the univalence of the category $[C, D]$ of functors from $C$ to $D$, and natural transformations between them, is tedious.
In \Cref{sec:disp_univalence}, we develop some machinery to prove the following theorem.
\begin{theorem}[\coqident{Bicategories.PseudoFunctors.Display.PseudoFunctorBicat}{psfunctor_bicat_is_univalent_2}]
  \label{thm:psfunct_univalent_2}
  If $\B$ is a (not necessarily univalent) bicategory and $\C$ is a univalent bicategory, then the bicategory $\pseudo(\B,\C)$ of pseudofunctors from $\B$ to $\C$ is univalent.
\end{theorem}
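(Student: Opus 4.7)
My plan is to avoid a direct construction of the equivalence $\idtoiso^{2,0}$ for $\pseudo(\B,\C)$---which would force us to unfold the full content of a pseudotransformation at once---and instead realise $\pseudo(\B,\C)$ as an iterated total bicategory of displayed bicategories over a simple base. I then appeal to the general principle, to be established in \Cref{sec:disp_univalence}, that a total bicategory $\total{\disp{\D}}$ is univalent whenever its base is univalent and $\disp{\D}$ is displayed univalent. This reduces the problem to checking displayed univalence of each layer independently.

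Concretely, I would peel a pseudofunctor $F : \pseudoF{\B}{\C}$ apart in the order suggested by \Cref{def:psfun}. The base carries only the action on 0-cells, namely a function $\B_0 \to \C_0$, with the evident pseudotransformations and modifications; since this amounts to a $\B_0$-indexed product of copies of $\C$, its univalence reduces to that of $\C$. Successive displayed layers then add the family of functors on hom-categories (the action on 1-cells and 2-cells together with functoriality), the invertible 2-cells $\identitor{F}$ and $\compositor{F}$, and finally the equational axioms. The last, purely propositional layer is displayed univalent for free, because its displayed 2-cells and displayed adjoint equivalences are propositions and hence trivially match the corresponding displayed identity types.

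For each data-carrying layer, the displayed univalence obligation unfolds to a statement about structures inside the hom-categories of $\C$. For the 1-cell action layer this becomes an equivalence between equalities of functors $\homC{\B_1}{a}{b} \to \homC{\C_1}{F(a)}{F(b)}$ and pseudonatural families of invertible 2-cells between them, which follows from univalence of functor categories into univalent categories. For the identitor and compositor layers, we need equalities of invertible 2-cells to correspond to the displayed notion; here local univalence of $\C$ together with \Cref{prop:J_local} allows us to path-induct on the supposed displayed invertible 2-cell to the identity, whereupon both sides collapse to equations in $\C$ that are definitionally or routinely equal.

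The main obstacle I foresee is not any single layer but arranging the displayed decomposition so that each step is genuinely modular and so that the identity, composition, and laws of $\pseudo(\B,\C)$ read off correctly from the total construction. In particular, the compatibility between the pseudotransformation data at adjacent layers---that the displayed 1-cell between two 1-cell actions really does supply the naturality square of a pseudotransformation---must be packaged cleanly before the global induction can go through. Once the displayed machinery of \Cref{sec:disp_univalence} is in place, the remainder of the proof is essentially bookkeeping across the seven or so layers, with the only nontrivial reductions occurring in the 1-cell/2-cell-action layers, where univalence of the hom-categories of $\C$ is essential.
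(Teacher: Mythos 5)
Your proposal follows essentially the same route as the paper: $\pseudo(\B,\C)$ is built as an iterated total bicategory over the base of maps $\B_0\to\C_0$, each displayed layer (action on 1-cells, action on 2-cells, identitor, compositor, and finally a propositional full-subbicategory layer for the axioms) is shown displayed univalent using local univalence of $\C$, and \Cref{th:univalence_total} assembles these into univalence of the total bicategory. The only difference is cosmetic grouping---the paper keeps the 2-cell action as a separate chaotic layer alongside the identitor and compositor and defers functoriality of $F_2$ to the final law layer, rather than bundling the hom-category functors into one layer---which does not change the substance of the argument.
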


\section{Bicategories and 2-Categories}
\label{sec:2-categories}
In this section, we propose a definition of 2-category, and compare 2-categories to bicategories.
We start by defining strict bicategories.

\begin{defi}[\coqident{Bicategories.Core.Strictness}{locally_strict},\coqident{Bicategories.Core.Strictness}{is_strict_bicat}]
\label{def:1-strict}
A bicategory is called \fat{locally strict} if each $\B_1(x,y)$ is a set.
A \fat{1-strict} bicategory is a locally strict bicategory such that
\begin{enumerate}
	\item \label{item:lpunitor}
	for each $a, b : \B$ and $f : a \onecell b$ we have $\plunitor(f) : \id_1(a) \cdot f = f$, and $\idtoiso^{2,1}(\plunitor(f)) = \lunitor(f)$;
	\item \label{item:rpunitor}
	for each $a, b : \B$ and $f : a \onecell b$ we have $\prunitor(f) : f \cdot \id_1(b) = f$, and $\idtoiso^{2,1}(\prunitor(f)) = \runitor(f)$;
	\item \label{item:passociator}
	for each $a, b, c, d : \B$ and $f : a \onecell b, g : b \onecell c$, and $h : c \onecell d$ we have $\passoc(f,g,h) : f \cdot (g \cdot h) = (f \cdot g) \cdot h$, and $\idtoiso^{2,1}(\passoc(f,g,h)) = \assoc(f,g,h)$.
\end{enumerate}
\end{defi}

\begin{proposition}[\coqident{Bicategories.Core.Strictness}{isaprop_is_strict_bicat}]
Being a 1-strict bicategory is a proposition.
\end{proposition}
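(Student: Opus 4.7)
The plan is to observe that the data of a 1-strict bicategory structure sits inside a nested product of propositions, so the whole thing is a proposition by standard closure properties (products/dependent functions into propositions are propositions).

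First, I would note that the underlying bicategory $\B$ is fixed, so the question is whether the \emph{additional} structure---being locally strict, plus the three families $\plunitor$, $\prunitor$, $\passoc$---is a proposition. Being locally strict is the statement that each hom $\B_1(x,y)$ is a set, and ``being a set'' is itself a proposition; so local strictness is a proposition, closed under dependent products over $x,y$.

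Next, I would assume local strictness (using that, in proving a proposition, we may assume the other fields). Under this assumption, each $\plunitor(f)$ is an inhabitant of an identity type $\id_1(a)\cdot f = f$ inside the \emph{set} $\B_1(a,b)$; since identity types on sets are propositions, this type is a proposition. By closure of propositions under dependent products, the whole function $\plunitor : \Prod a,b : \B_0. \Prod f : a \onecell b. \id_1(a)\cdot f = f$ is a proposition. The same argument applies verbatim to $\prunitor$ and $\passoc$, whose values again live in identity types on sets.

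Finally, I would combine these facts: the type of 1-strict bicategory structures on $\B$ is a $\Sigma$-type whose first component is the proposition ``$\B$ is locally strict'' and whose remaining components are, under that assumption, each propositions. A $\Sigma$-type of propositions (or iterated $\Sigma$-types of propositions) is a proposition, which gives the claim. No serious obstacle is expected here; the only subtlety worth flagging is making sure the statement of local strictness is used when proving that the identity types are propositions, which is why it is natural to discharge local strictness first and then handle the three equality fields in one stroke.
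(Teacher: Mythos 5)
Your proposal is correct and matches the argument used in the formalization: local strictness is a proposition because ``being a set'' is, and under that assumption the three families $\plunitor$, $\prunitor$, $\passoc$ land in identity types of sets, hence are propositions, so the whole $\Sigma$-type is a proposition by the usual closure properties. The one subtlety you flag---that local strictness must be assumed before concluding the equality fields are propositional---is exactly the point, and you handle it correctly.
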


Now let us look at an example of a 1-strict bicategory.

\begin{example}[\coqident{Bicategories.Core.Examples.StrictCats}{strict_bicat_of_strict_cats}]
Recall that a category is called \emph{strict} if its objects form a set.
Define $\Cat_S$ to be the bicategory whose objects are strict categories, 1-cells are functors, and 2-cells are natural transformations.
Then $\Cat_S$ is a 1-strict bicategory.
\end{example}

The bicategory $\Cat$ of univalent categories is not 1-strict.
This is because functors between two categories do not necessarily form a set.

\begin{proposition}[\coqident{Bicategories.Core.Strictness}{cat_not_a_two_cat}]
Assuming the univalence axiom, we can show that the bicategory $\Cat$ is not 1-strict.
\end{proposition}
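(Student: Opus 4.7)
My plan is to refute the stronger claim that $\Cat$ is even locally strict; since any 1-strict bicategory is in particular locally strict, this suffices. So it is enough to produce univalent categories $C$ and $D$ together with an argument that the type $\B_1(C, D)$ of functors from $C$ to $D$ is not a set.

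For $C$, I would take the unit category, with a single object and only its identity morphism. A standard calculation shows that the type of functors out of the unit category into an arbitrary category $D$ is equivalent to the object type $D_0$: specifying such a functor amounts to picking an object of $D$, with the remaining data on morphisms, identities, and composition being contractibly determined by functoriality. Hence the problem reduces to finding a univalent category $D$ whose object type is not a set.

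For $D$, the cleanest choice is the univalent category $\Set$ of sets. By univalence, for $A, B : \Set$ the identity type $A = B$ is equivalent to the type of bijections $A \iso B$, and the set $\Bool$ admits two distinct automorphisms (the identity and boolean negation). Thus $\Bool = \Bool$ in $\Set$ has at least two inhabitants, so the object type of $\Set$ is not a set but a genuine 1-type. Composing with the previous step, $\B_1(C, \Set)$ in $\Cat$ is equivalent to $\Set$, which is not a set, so $\Cat$ is not locally strict and in particular not 1-strict.

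The main obstacle is to make precise the equivalence between functors out of the unit category and the object type: formally this requires unfolding the definition of functor and discharging the coherence equations as contractibly inhabited, which is routine but fiddly. Everything else reduces to standard univalence manipulations. It is worth noting that this argument genuinely depends on $\Cat$ containing univalent rather than strict categories, which is precisely what distinguishes it from $\Cat_S$.
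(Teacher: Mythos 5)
Your argument is correct and follows the same route as the paper, which justifies the proposition precisely by observing that functors between univalent categories need not form a set, i.e.\ that $\Cat$ fails to be locally strict. You simply make the witness explicit (the unit category, $\Set$, and the two automorphisms of $\Bool$ under univalence), which is a sound and complete instantiation of that one-line reason.
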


\begin{rem}
 Without the local strictness condition (the 1-cells form sets), the conditions of \Cref{item:lpunitor,item:rpunitor,item:passociator} of \Cref{def:1-strict} are not well-behaved.
 In our \UniMath formalization, we study a coherent version of \Cref{def:1-strict} without the requirement that the 1-cells form a set, under the name of \fat{coherent strictness structures} (\coqident{Bicategories.Core.Strictness}{coh_strictness_structure}).
 One can show that the additional coherence conditions are unique and automatic when the bicategory under consideration is locally strict or locally univalent.
 Furthermore, when the bicategory is locally univalent, the type of coherent strictness structures on it is contractible (\coqident{Bicategories.Core.Strictness}{unique_strictness_structure_is_univalent_2_1}).
\end{rem}

Next we look at 2-categories.
These are defined as 1-categories with additional structure and properties.

\begin{defi}[\coqident{CategoryTheory.Core.TwoCategories}{two_cat}]
A \fat{2-category} $\C$ consists of
\begin{itemize}
	\item a category $\C_0$;
	\item for each $x, y : \C_0$ and $f, g : x \onecell y$ a set $\C_2(f, g)$ of \fat{2-cells};
	\item an \fat{identity 2-cell} $\id_2(f) : \C_2(f,f)$;
	\item a \fat{vertical composition} $\tc \vcomp \tcB : \C_2(f,h)$ for all 1-cells $f,g,h : \C_1(a,b)$ and 2-cells $\tc : \C_2(f,g)$ and $\tcB : \C_2(g,h)$;
	\item a \fat{left whiskering} $f \whiskerl \tc : \C_2(f \cdot g, f \cdot h)$ for all 1-cells $f : \C_1(a,b)$ and $g,h : \C_1(b,c)$ and 2-cells $\tc : \C_2(g,h)$;
	\item a \fat{right whiskering} $\tc \whiskerr h : \C_2(f \cdot h, g \cdot h)$ for all 1-cells $f, g : \C_1(a,b)$ and $h : \C_1(b,c)$ and 2-cells $\tc : \C_2(f,g)$;
\end{itemize}
such that, for all suitable objects, 1-cells, and 2-cells,
\begin{itemize}
\item $\id_2(f) \vcomp \tc = \tc, \quad \tc \vcomp \id_2(g) = \tc, \quad \tc \vcomp (\tcB \vcomp \tcC) = (\tc \vcomp \tcB) \vcomp \tcC$;
\item $f \whiskerl (\id_2 g) = \id_2(f \cdot g), \quad f \whiskerl (\tc \vcomp \tcB) = (f \whiskerl \tc) \vcomp (f \whiskerl \tcB)$;
\item $(\id_2 f) \whiskerr g = \id_2(f \cdot g), \quad (\tc \vcomp \tcB) \whiskerr g = (\tc \whiskerr g) \vcomp (\tcB \whiskerr g)$;
\item 
$(\id_1 a \whiskerl x) \vcomp \idtotwomor (\idleft(g)) = \idtotwomor (\idleft(f)) \vcomp x$;
\item 
$(x \whiskerr \id_1 b) \vcomp \idtotwomor(\idright(g)) = \idtotwomor(\idright(f)) \vcomp x$;
\item 
$(f \whiskerl (g \whiskerl x)) \vcomp \idtotwomor(\assoceq(f,g,i)) = \idtotwomor(\assoceq(f,g,h)) \vcomp (f \cdot g \whiskerl x)$;
\item 
$f \whiskerl (x \whiskerr i) \vcomp \idtotwomor(\assoceq (f,h,i)) = \idtotwomor (\assoceq(f,g,i)) \vcomp ((f \whiskerl x) \whiskerr i)$;
\item 
$ \idtotwomor(\assoceq(f,h,i)) \vcomp (x \whiskerr h \whiskerr i) = (x \whiskerr h \cdot i) \vcomp \idtotwomor(\assoceq(g,h,i))$.
\end{itemize}
\end{defi}
Here, the function $\idtotwomor_{f,g} : (f = g) \to (f \mytwocell g)$ is defined by path induction, sending the identity path to the identity 2-cell.
The paths $\idleft(f)$, $\idright(g)$, and $\assoceq(f,g,h)$ are those given by the categorical axioms for $\C_0$.

We call 0-cells of a 2-category $\C$ the objects of $\C_0$, and 1-cells the morphisms of the category $\C_0$.
In particular, the 1-cells between every two 0-cells of a 2-category always form a set.

\begin{rem}
 The last few axioms of a 2-category could, equivalently, be stated using transport along a categorical equality axiom (e.g., along $\idleft(f)$), instead of using $\idtotwomor$.
\end{rem}

The type of 1-strict bicategories is equivalent to that of 2-categories.

\begin{problem}
\label{prob:strict-to-2cat}
To construct an equivalence between the type of 1-strict bicategories and the type of 2-categories.
\end{problem}

\begin{construction}[\coqident{Bicategories.Core.Strictness}{strict_bicat_to_two_cat}]{prob:strict-to-2cat}
In one direction, suppose $\C$ is a 2-category.
We associate to $\C$ the following bicategory:
\begin{enumerate}
\item 0-cells, 1-cells, and 2-cells are those of $\C$;
\item composition and identity of 1-cells and 2-cells are those of $\C$, respectively;
\item whiskering is given by the whiskering of $\C$;
\item left and right unitors, and associators, are 2-cells induced by the corresponding equality axioms via $\idtotwomor$.
\end{enumerate}
The bicategorical axioms are then easily shown, using compatibility, in a suitable sense, of $\idtotwomor$ with composition of paths (which corresponds to composition of 2-cells) and functions on paths (which corresponds to whiskering).
The resulting bicategory is 1-strict.

In the other direction, suppose $\B$ is a 1-strict bicategory.
We associate the following 2-category to $\B$:
\begin{enumerate}
\item 0-cells, 1-cells, and 2-cells are those of $\B$, respectively;
\item composition, identities, and whiskering are given by the corresponding operations of $\B$;
\item the equality axioms for composition of 1-cells are proved using the strictness properties of $\B$;
\item the remaining axioms are proved using suitable compatibility results about $\idtotwomor$.
\end{enumerate}

\noindent
The two functions are easily shown to be inverse to each other, thus forming an equivalence of types.
\end{construction}

\section{The Yoneda Embedding}
\label{sec:yoneda}
In this section, we show that any locally univalent bicategory naturally embeds into a univalent one, via the Yoneda embedding.
This construction is similar to the Rezk completion for categories \cite[Theorem 8.5]{rezk_completion} and it makes use of the Yoneda lemma.
We start by discussing representable pseudofunctors, pseudotransformations, and modifications.
These are used to define the desired embedding.

\begin{defi}[Representables]
Let $\B$ be a locally univalent bicategory.
\begin{itemize}
	\item (\coqident{Bicategories.PseudoFunctors.Representable}{representable}) Given an object $a : \B$, we define the \fat{representable pseudofunctor} $\repps(a)$ from $\op{\B}$ (see \Cref{ex:op}) to $\Cat$.
	It sends objects $b$ to the category $\homC{\B_1}{b}{a}$ and 1-cells $f : b_1 \onecell b_2$ to the functor $\repps(a)(f) : \homC{\B_1}{b_2}{a} \to \homC{\B_1}{b_1}{a}$ given by $g \mapsto f \cdot g$. If we have 1-cells $f, g : b_1 \onecell b_2$ and a 2-cell $\tc : f \mytwocell g$, then $\repps(a)(\tc) : \repps(a)(f) \mytwocell \repps(a)(g)$ is the natural transformation whose component for each $h : b_2 \onecell a$ is $\tc \whiskerr h$.
	\item (\coqident{Bicategories.PseudoFunctors.Representable}{representable1}) Let $a, b : \B$ be objects and let $f : a \onecell b$ be a 1-cell.
	Then we define the \fat{representable pseudotransformation} $\reptr(f)$ from $\repps(a)$ to $\repps(b)$.
	Its component for each $c : \B$ is the functor $\reptr(f)(c) : \homC{\B_1}{c}{a} \onecell \homC{\B_1}{c}{b}$ sending $g$ to $g \cdot f$.
	If we have $c_1, c_2 : \B$ and a 1-cell $g : c_1 \onecell c_2$, then the naturality 2-cell $\reptr(f)(g) : \reptr(f)(c_1) \cdot \repps(b)(g) \mytwocell \repps(a)(g) \cdot \reptr(f)(c_2)$ is a natural transformation, whose component for each $h$ is $\assoc(g, h, f) : g \cdot (h \cdot f) \mytwocell (g \cdot h) \cdot f$.
	\item (\coqident{Bicategories.PseudoFunctors.Representable}{representable2}) Suppose that we have 0-cells $a, b : \B$, 1-cells $f, g : a \onecell b$, and a 2-cell $\tc : f \mytwocell g$.
	Then the \fat{representable modification} $\repmo(\alpha)$ from $\reptr(f)$ to $\reptr(g)$ is a modification, whose component for each $c : \B$ is the natural transformation defined on $h : \B(c,a)$ by $h \whiskerl \tc$.
\end{itemize}
\end{defi}

\begin{defi}[\coqident{Bicategories.PseudoFunctors.Yoneda}{y}]
Let $\B$ be a locally univalent bicategory.
Then the \fat{Yoneda embedding} $\yoneda : \pseudoF{\B}{\pseudo(\op{\B}, \Cat)}$ is defined as
\begin{align*}
\yoneda(a) &= \repps(a) && \mbox{for } a : \B \\
\yoneda(f) &= \reptr(f)  && \mbox{for } a, b : \B, f : a \onecell b \\
\yoneda(\tc) &= \repmo(\tc) && \mbox{for } a, b : \B, f, g : a \onecell b, \theta : f \mytwocell b
\end{align*}
\end{defi}

\begin{problem}[Bicategorical Yoneda lemma]
\label{prob:yoneda}
Given a locally univalent bicategory $\B$, a pseudofunctor  $P : \pseudoF{\op{\B}}{\Cat}$, and $a : \B$, to construct an adjoint equivalence between the categories $\homC{\pseudo(\op{\B}, \Cat)}{\yoneda(a)}{P}$ and $P(a)$.
\end{problem}

\begin{construction}[\coqident{Bicategories.Core.YonedaLemma}{bicategorical_yoneda_lemma}]{prob:yoneda}
To construct this, we provide
\begin{itemize}
	\item (\coqident{Bicategories.Core.YonedaLemma}{yoneda_to_presheaf}) A functor $F$ from $\pstrans{\yoneda(a)}{P}$ to $P(a)$;
	\item (\coqident{Bicategories.Core.YonedaLemma}{presheaf_to_yoneda}) A functor $G$ from $P(a)$ to $\pstrans{\yoneda(a)}{P}$;
	\item (\coqident{Bicategories.Core.YonedaLemma}{yoneda_unit}) A natural isomorphism from the identity to $F \cdot G$;
	\item (\coqident{Bicategories.Core.YonedaLemma}{yoneda_counit}) A natural isomorphism from $G \cdot F$ to the identity.
\end{itemize}
We only discuss the data of the involved functors.
The functor $F$ sends pseudotransformations $\tau$ to $\tau(a)(\id_1(a))$ and modifications $m$ to $m(a)(\id_1(a))(a)$.
In the other direction, $G$ sends objects $z : P(a)$ to the pseudotransformation whose components are $P(f)(z)$ with $b : \op{\B}$ and $f : b \onecell a$.
\end{construction}

Now let us use the bicategorical Yoneda lemma to construct for each locally univalent bicategory a weakly equivalent univalent bicategory.
We follow the construction of the Rezk completion by Ahrens, Kapulkin, and Shulman \cite{rezk_completion}, and take the image of the Yoneda embedding to be the univalent completion.

First, we define weak equivalences of bicategories.
\begin{defi}
	\label{def:weakequiv}
	Let $\B$ and $\C$ be bicategories and let $F : \pseudoF{\B}{\C}$ be a pseudofunctor. We say
	\begin{itemize}
		\item (\coqident{Bicategories.PseudoFunctors.PseudoFunctor}{local_equivalence}) $F$ is a \fat{local equivalence} if for each $x, y : \B$ the functor from $\homC{\B_1}{x}{y}$ to $\homC{\C_1}{F(x)}{F(y)}$ induced by $F$ is an adjoint equivalence.
		\item (\coqident{Bicategories.PseudoFunctors.PseudoFunctor}{essentially_surjective}) $F$ is \fat{essentially surjective} if for each $y : \C$ there merely exists an $x : \B$ and an adjoint equivalence from $F(x)$ to $y$.
		\item (\coqident{Bicategories.PseudoFunctors.PseudoFunctor}{weak_equivalence}) $F$ is a \fat{weak equivalence} if $F$ is both a local equivalence and essentially surjective.
	\end{itemize}
\end{defi}

The notion of weak equivalence has already been studied in classical mathematics where, using the axiom of choice, it was shown to be equivalent to the usual notion of equivalence \cite{lack20102,leinster:basic-bicats}.
However, these notions are generally not equivalent in a constructive setting, but we conjecture that they are for univalent bicategories.

Furthermore, the notion of weak equivalence can be weakened by requiring that the pseudofunctor only induces a weak equivalence of categories on the hom-categories.
Such a weaker notion would be useful if one desires to find a univalent completion of arbitrary bicategories instead of just locally univalent ones.
To do so, we anticipate a two-step process: first, a local completion, which embeds bicategories in locally univalent ones, followed by, second, the construction described in this section.
More concretely, for any bicategory $\B$ we expect to be able to construct pseudofunctors as in the following diagram.
\[
\xymatrix
{
	\B \ar[r]^-{\etaloc} & \loccompletion{B} \ar[r]^-{\etaglob} & \completion{B}
}
\]
Here, $\loccompletion{B}$ is a locally univalent bicategory, and $\completion{B}$ is a univalent one.
While the pseudofunctor $\etaglob : \loccompletion{B} \to \completion{B}$ would be a weak equivalence according to \Cref{def:weakequiv}, $\etaloc : \B \to \loccompletion{B}$, would not be: locally, it consists of \emph{weak} equivalences of categories instead of equivalences.
Hence, the more general notion would thus be applicable if one is interested in the univalent completion of arbitrary bicategories.
We expect such a local completion $\etaloc : B \to \loccompletion{B}$ can be constructed by taking the Rezk completion of every hom-category, and since that only yields a weak equivalence of categories, the resulting pseudofunctor is not locally an equivalence.
However, we only consider the second step in this paper, and leave the construction of $\etaloc : \B \to \loccompletion{B}$ as an open problem.

\begin{conjecture}
For every bicategory $\B$ there is a locally univalent bicategory $\loccompletion{B}$ and a pseudofunctor $\etaloc : \pseudoF{\B}{\loccompletion{B}}$ which is essentially surjective and locally a weak equivalence of categories.
\end{conjecture}

Weak equivalences between univalent categories are actually equivalences~\cite[Lemma~6.8]{rezk_completion}.
We conjecture that the same is possible for bicategories.

\begin{conjecture}
Every weak equivalence between univalent bicategories is a biequivalence.
\end{conjecture}

From the Yoneda lemma we know that $\yoneda$ is a local equivalence:
\begin{corollary}[\coqident{Bicategories.Core.YonedaLemma}{yoneda_mor_is_equivalence}]
	\label{cor:local_equiv}
The pseudofunctor $\yoneda$ is a local equivalence.
\end{corollary}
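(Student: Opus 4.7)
The proof plan is to instantiate the bicategorical Yoneda lemma (Construction/Problem \ref{prob:yoneda}) at the representable pseudofunctor $\yoneda(y) = \repps(y)$ and identify the resulting right-to-left equivalence with the local action of the Yoneda embedding.

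Fix objects $x, y : \B$. We must show that the functor
\[
\yoneda_{x,y} : \homC{\B_1}{x}{y} \longrightarrow \homC{\pseudo(\op{\B},\Cat)_1}{\yoneda(x)}{\yoneda(y)}
\]
induced by $\yoneda$ is an adjoint equivalence of categories. Apply the bicategorical Yoneda lemma with $a \defeq x$ and $P \defeq \yoneda(y) = \repps(y)$. Since $\repps(y)(x) = \homC{\B_1}{x}{y}$, this yields an adjoint equivalence between $\homC{\pseudo(\op{\B}, \Cat)_1}{\yoneda(x)}{\yoneda(y)}$ and $\homC{\B_1}{x}{y}$, together with an explicit pair of functors $F$ and $G$ witnessing it.

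The central observation is that the functor $G$ produced by the Yoneda lemma coincides with $\yoneda_{x,y}$. Unfolding the construction of $G$: an object $z : \repps(y)(x) = \homC{\B_1}{x}{y}$ is sent to the pseudotransformation whose component at $b : \op{\B}$ is the functor $\homC{\B_1}{b}{x} \to \homC{\B_1}{b}{y}$ given on objects by $f \mapsto \repps(y)(f)(z) = f \cdot z$. On the other hand, $\yoneda(z) = \reptr(z)$ has component at $b$ the functor $g \mapsto g \cdot z$, which is literally the same data. One checks similarly that the naturality 2-cells of $\reptr(z)$ (built out of associators) agree with the naturality data produced by $G$, and that the action on 2-cells $\tc : z \twocell z'$ — namely the representable modification $\repmo(\tc)$, with components $h \whiskerl \tc$ — matches the morphism part of $G$. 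These identifications make $\yoneda_{x,y}$ definitionally equal to (or at worst naturally isomorphic to) $G$.

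Since adjoint equivalences of categories are preserved under natural isomorphism, and $G$ is an adjoint equivalence by \Cref{prob:yoneda}, we conclude that $\yoneda_{x,y}$ is an adjoint equivalence, and hence that $\yoneda$ is a local equivalence. The main obstacle is the bookkeeping in the identification of $G$ with $\yoneda_{x,y}$: while on objects and on 2-cells the two functors manifestly coincide with the definition of $\reptr$ and $\repmo$, checking that the pseudonaturality squares match up on the nose (rather than just up to invertible modification) is the one place where some care is required. If a strict equality cannot be obtained, it is enough to produce an invertible 2-cell in $\pseudo(\op{\B},\Cat)$ between $\reptr(z)$ and $G(z)$ natural in $z$, which then transports the adjoint equivalence structure of $G$ to $\yoneda_{x,y}$.
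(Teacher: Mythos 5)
Your proof is correct and takes essentially the same route as the paper, which derives this corollary directly from the bicategorical Yoneda lemma (Problem~\ref{prob:yoneda}) instantiated at $P \defeq \yoneda(y)$ and $a \defeq x$, just as you do. The identification of the functor $G$ with the local action $\yoneda_{x,y}$ (and your fallback of transporting the adjoint equivalence structure along a natural isomorphism if the match is not definitional) is precisely the bookkeeping the paper leaves implicit.
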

However, $\yoneda$ is not essentially surjective: the bicategory $\pseudo(\op{\B}, \Cat)$ contains non-representable presheaves.
To make $\yoneda$ essentially surjective we restrict the bicategory of presheaves to the full image of the Yoneda embedding.
\begin{defi}[\coqident{Bicategories.Core.Examples.Image}{full_image}]
Let $\B$ and $\C$ be bicategories and let $F : \pseudoF{\B}{\C}$ be a pseudofunctor.
Then the \fat{full image} $\image(F)$ of $F$ is the full subbicategory consisting of those objects $c$ in $\C$ for which there merely exists $b : \B$ such that $F(b) = c$.
\end{defi}

\begin{proposition}[\coqident{Bicategories.Core.Examples.Image}{is_univalent_2_full_image}]
\label{prop:univ_image}
If $\C$ is univalent, then so is the full image of $F : \pseudoF{\B}{\C}$.
\end{proposition}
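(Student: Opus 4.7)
The plan is to recognise the full image $\image(F)$ as an instance of the full subbicategory construction from Definition~\ref{ex:fullsub}, and then invoke the general univalence result for full subbicategories, Example~\ref{ex:univalent} item~\ref{ex:fullsub:univalent}.

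First I would unfold the definition of $\image(F)$: by definition, its 0-cells are pairs $(c, p)$ where $c : \C_0$ and $p$ is a proof that there merely exists $b : \B_0$ with $F_0(b) = c$; 1-cells and 2-cells are as in $\C$. The key observation is that the predicate $\pred : \C_0 \to \hProp$ given by $\pred(c) \defeq \bigl\lVert \sum_{b : \B_0} F_0(b) = c \bigr\rVert$ is valued in $\hProp$ precisely because we use the propositional truncation. Hence $\image(F)$ is literally the full subbicategory of $\C$ determined by $\pred$.

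Now I would apply Example~\ref{ex:univalent}, item~\ref{ex:fullsub:univalent}: for a univalent bicategory $\C$ and any predicate $\pred : \C_0 \to \hProp$, the full subbicategory of $\C$ on objects satisfying $\pred$ is again univalent. Since we are given that $\C$ is univalent and we have just exhibited $\pred$ as a genuine $\hProp$-valued predicate on $\C_0$, both local and global univalence transfer from $\C$ to $\image(F)$. Local univalence is immediate because the hom-categories of $\image(F)$ are identical to those of $\C$. Global univalence uses that equality of $(c_1, p_1)$ and $(c_2, p_2)$ is equivalent to equality $c_1 = c_2$ (the second components being propositional), while adjoint equivalences in $\image(F)$ agree with adjoint equivalences in $\C$ between the underlying 0-cells, so the map $\idtoiso^{2,0}$ in $\image(F)$ factors through the equivalence $\idtoiso^{2,0}$ of $\C$.

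I do not expect any serious obstacle: the real work has already been done in proving univalence of full subbicategories. The only point worth being careful about is ensuring that the predicate is indeed a proposition, which is why the existential in the definition of $\image(F)$ uses propositional truncation rather than a bare $\Sigma$-type.
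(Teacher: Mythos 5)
Your proposal is correct and follows exactly the paper's own (one-line) argument: the full image is by definition the full subbicategory of $\C$ on the $\hProp$-valued predicate $\pred(c) \defeq \lVert \sum_{b : \B_0} F_0(b) = c \rVert$, and univalence then follows from \Cref{ex:fullsub:univalent} in \Cref{ex:univalent}. The extra detail you supply about why the truncation makes $\pred$ propositional and how local and global univalence transfer is a sound elaboration of what the paper leaves implicit.
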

\begin{proof}
  Follows from \Cref{ex:fullsub:univalent} in \Cref{ex:univalent}.
\end{proof}
\begin{defi}[\coqident{Bicategories.PseudoFunctors.Examples.CorestrictImage}{corestrict_full_image}]
Again let $\B$ and $\C$ be bicategories and suppose we have a pseudofunctor $F : \pseudoF{\B}{\C}$.
Then we define the \fat{corestriction} of $F$ to be the pseudofunctor $\restrict{F} : \pseudoF{\B}{\image(F)}$ which sends $b$ to $F(b)$.
The fact that $F(b)$ is indeed in the image is witnessed by $|(b , \idpath)|$.
\end{defi}

Now everything is in place to construct the desired embedding into a univalent bicategory.
\begin{problem}
\label{prob:globalcomp}
For each locally univalent bicategory $\B$, to construct a univalent bicategory $\completion{\B}$ and a weak equivalence $F : \pseudoF{\B}{\completion{\B}}$.
\end{problem}

\begin{construction}[\coqident{Bicategories.Core.YonedaLemma}{rezk_completion_2_0}]{prob:globalcomp}
\label{constr:rezk-emb}
We define $\completion{\B}$ to be the image of the Yoneda embedding $\yoneda : \pseudoF{\B}{\pseudo(\op{\B}, \Cat)}$.
Since the codomain of $\yoneda$ is univalent by \Cref{thm:psfunct_univalent_2}, the image is univalent as well by \Cref{prop:univ_image}.
Note that the corestriction gives rise to a pseudofunctor $\restrict{\yoneda} : \pseudoF{\B}{\completion{\B}}$.
It is essentially surjective by construction.
Furthermore, $\yoneda$ is a local equivalence by \Cref{cor:local_equiv}, and local equivalences are preserved by corestriction.
Hence, $\restrict{\yoneda}$ is indeed a weak equivalence.
\end{construction}
Note that \Cref{constr:rezk-emb} raises universe levels: the bicategory $\completion{\B}$ lives in a higher universe than $B$ itself, for the same reasons as in the 1-categorical case; see \cite[Remark~8.6]{rezk_completion}.

\section{Displayed Bicategories}
\label{sec:displayed}
Now let us study how to construct more complicated univalent bicategories.
To that end, we introduce \emph{displayed bicategories}, the bicategorical analog to the notion of
displayed category developed in \cite{AhrensL19}.
A displayed \onecategory $D$ over a given (base) category $C$ consists of a family of objects over objects in $C$
and a family of morphisms over morphisms in $C$
together with suitable displayed operations of composition and identity.
A category $\total{D}$ is then constructed, the objects and morphisms of which are pairs of objects and morphisms from $C$
and $D$, respectively. 
Properties of $\total{D}$, in particular univalence, can be shown from analogous, but simpler, conditions on $C$ and $D$.

A prototypical example is the following displayed category over $C\defeq \Set$: an object over a set $X$ is a group structure on $X$,
and a morphism over a function $f : X \to X'$ from group structure $G$ (on $X$) to group structure $G'$ (on $X'$) 
is a proof of the fact that $f$ is compatible with $G$ and $G'$.
The total category is the category of groups, and its univalence follows from univalence of $\Set$ and a univalence property 
of the displayed data.

Just like in 1-category theory, many examples of bicategories are obtained by endowing previously considered bicategories with additional structure.
An example is the bicategory of pointed 1-types in $\U$.
The objects in this bicategory are pairs of a 1-type $A$ and an inhabitant $a : A$.
The morphisms are pairs of a morphism $f$ of 1-types and a path witnessing that $f$ preserves the selected points.
Similarly, the 2-cells are pairs of a homotopy $p$ and a proof that this $p$ commutes with the point preservation proofs.
Thus, this bicategory is obtained from $\Onetypes_{\U}$ by endowing the cells on each level with additional structure.

Of course, the structure should be added in such a way that we are guaranteed to obtain a bicategory at the end.
Now let us give the formal definition of displayed bicategories.

\begin{defi}[\coqident{Bicategories.DisplayedBicats.DispBicat}{disp_bicat}]
  \label{def:disp_bicat}
Given a bicategory $\B$, a \fat{displayed bicategory $\D$ over $\B$} is given by data analogous to that of a bicategory, to which the numbering refers:
\begin{description}
	\item [\itemref{item:0-cell}] for each $a : \B_0$ a type $\dob{\D}{a}$ of displayed 0-cells over $a$;
	\item [\itemref{item:1-cell}] for each $f : a \onecell b$ in $\B$ and $\aa : \dob{\D}{a} , \bb : \dob{\D}{b}$ a type $\dmor{\aa}{\bb}{f}$ of displayed 1-cells over $f$;
	\item [\itemref{item:2-cell}] for each $\tc : f \mytwocell g$ in $\B$, $\ff : \dmor{\aa}{\bb}{f}$ and $\gg : \dmor{\aa}{\bb}{g}$ a set $\dtwo{\ff}{\gg}{\tc}$ of displayed 2-cells over $\tc$
\end{description}
and dependent versions of operations and laws from \Cref{def:bicat}, which are
\begin{description}
	\item [\itemref{item:id-1}] for each $a : \B_0$ and $\aa : \dob{\D}{a}$, we have $\did_1(\aa) : \dmor{\aa}{\aa}{\id_1(a)}$;
	\item [\itemref{item:comp-1}] for all 1-cells $f : a \onecell b$, $g : b \onecell c$, and displayed 1-cells $\ff : \dmor{\aa}{\bb}{f}$ and $\gg : \dmor{\bb}{\cc}{g}$, we have a displayed 1-cell $\ff \cdot \gg : \dmor{\aa}{\cc}{f \cdot g}$;
	\item [\itemref{item:id-2}] for all $f : \B_1(a,b)$, $\aa : \dob{\D}{a}$, $\bb : \dob{\D}{b}$, and $\ff : \dmor{\aa}{\bb}{f}$, we have $\did_2(\ff) : \dtwo{\ff}{\ff}{\id_2(f)}$;
	\item [\itemref{item:v-comp}] for 2-cells $\tc : f \mytwocell g$ and $\tcB : g \mytwocell h$, and displayed 2-cells $\dtc : \dtwo{\ff}{\gg}{\tc}$ and $\dtcB : \dmor{\gg}{\hh}{\tcB}$, we have a displayed 2-cell $\dtc \vcomp \dtcB : \dtwo{\ff}{\hh}{\tc \vcomp \tcB}$.
	\item [\itemref{item:left-whisker}] for each displayed 1-cell $\ff : \dmor{\aa}{\bb}{f}$ and each displayed 2-cell $\dtwo{\gg}{\hh}{\tc}$, we have a displayed 2-cell $\ff \whiskerl \dtc : \dtwo{\ff \cdot \gg}{\ff \cdot \hh}{f \whiskerl \tc}$ ;
	\item [\itemref{item:right_whisker}] for each displayed 1-cell $\hh : \dmor{\bb}{\cc}{h}$ and each displayed 2-cell $\dtc : \dtwo{\ff}{\gg}{\tc}$, we have a displayed 2-cell $\dtc \whiskerr \hh : \dtwo{\ff \cdot \hh}{\gg \cdot \hh}{\tc \whiskerr h}$;
	\item [\itemref{item:lunitor}] for each $\ff : \dmor{\aa}{\bb}{f}$, we have displayed 2-cells $\lunitor(\ff) : \dtwo{\id_1(\aa) \cdot \ff}{\ff}{\lunitor(f)}$ and $\lunitor(\ff)^{-1} : \dtwo{\ff}{\id_1(\aa) \cdot \ff}{\lunitor(f)^{-1}}$;
	\item [\itemref{item:runitor}] for each $\ff : \dmor{\aa}{\bb}{f}$, displayed 2-cells $\runitor(\ff) : \dtwo{\ff \cdot \id_1(\bb)}{\ff}{\runitor(f)}$ and $\runitor(\ff)^{-1} : \dtwo{\ff}{\ff \cdot \id_1(\bb)}{\runitor(f)^{-1}}$;
	\item [\itemref{item:lassociator}] for each $\ff : \dtwo{\aa}{\bb}{f}$, $\gg : \dtwo{\bb}{\cc}{g}$, and $\hh : \dtwo{\cc}{\dd}{h}$, we have displayed 2-cells $\assoc(\ff,\gg,\hh) : \dtwo{\ff \cdot (\gg \cdot \hh)}{(\ff \cdot \gg) \cdot \hh}{\assoc(f,g,h)}$ and $\assoc(\ff,\gg,\hh)^{-1} : \dtwo{(\ff \cdot \gg) \cdot \hh}{\ff \cdot (\gg \cdot \hh)}{\assoc(f,g,h)^{-1}}$.
\end{description}
 Note that we use the same notation for the displayed and the non-displayed operations.

 These operations are subject to laws, which are derived  systematically from the non-displayed version.
 Just as for displayed 1-categories, the laws of displayed bicategories are heterogeneous, because they are transported along the analogous law in the base bicategory.
 For instance, the displayed left-unitary law for identity reads as  $\id_2(\ff) \vcomp \dtc =_e \dtc$, 
 where $e$ is the corresponding identity of \Cref{item:vcomp-l-r-assoc} in \Cref{def:bicat}.
 \begin{description}
 	\item [\itemref{item:vcomp-l-r-assoc} ] $\id_2(f) \vcomp \tc =_* \tc, \quad \tc \vcomp \id_2(g) =_* \tc, \quad \tc \vcomp (\tcB \vcomp \tcC) =_* (\tc \vcomp \tcB) \vcomp \tcC$;
 	\item [\itemref{item:lwhisker-id-comp}] $f \whiskerl (\id_2 g) =_* \id_2(f \cdot g), \quad f \whiskerl (\tc \vcomp \tcB) =_* (f \whiskerl \tc) \vcomp (f \whiskerl \tcB)$;
 	\item [\itemref{item:rwhisker-id-comp}] $(\id_2 f) \whiskerr g =_* \id_2(f \cdot g), \quad (\tc \vcomp \tcB) \whiskerr g =_* (\tc \whiskerr g) \vcomp (\tcB \whiskerr g)$;
 	\item [\itemref{item:id-lwhisker-vcomp}] $(\id_1(a) \whiskerl \tc) \vcomp \lambda(g) =_* \lambda(f) \vcomp \tc$;
 	\item [\itemref{item:id-rwhisker-vcomp}] $(\tc \whiskerr \id_1(b)) \vcomp \rho(g) =_* \rho(f) \vcomp \tc$;
 	\item [\itemref{item:lwhisker-assoc}] $(f \whiskerl (g \whiskerl \tc)) \vcomp \alpha(f, g, i) =_* \alpha(f,g,h) \vcomp ((f \cdot g) \whiskerl \tc)$;
 	\item [\itemref{item:lwhisker-rwhisker-assoc}] $(f \whiskerl (\tc \whiskerr i)) \vcomp \alpha(f,h,i)  =_* \alpha(f,g,i) \vcomp ((f \whiskerl \tc) \whiskerr i)$;
 	\item [\itemref{item:rwhisker-assoc}] $(\tc \whiskerr (h \cdot i)) \vcomp \alpha(g,h,i) =_* \alpha(f,h,i) \vcomp ((\tc \whiskerr h) \whiskerr i)$;
 	\item [\itemref{item:vcomp-whisker}] $(\tc \whiskerr h) \vcomp (g \whiskerl \tcB) =_* (f \whiskerl \tcB) \vcomp (\tc \whiskerr i)$;
 	\item [\itemref{item:lambda}] $\lambda(f) \vcomp \lambda(f)^{-1} =_* \id_2(\id_1(a) \cdot f), \quad \lambda(f)^{-1} \vcomp \lambda(f) =_* \id_2(f)$;
 	\item [\itemref{item:rho}] $\rho(f) \vcomp \rho(f)^{-1} =_* \id_2(f \cdot \id_1(b)), \quad \rho(f)^{-1} \vcomp \rho(f) =_* \id_2(f)$;
 	\item [\itemref{item:alpha}] $\alpha(f,g,h) \vcomp \alpha(f,g,h)^{-1} =_* \id_2(f \cdot (g \cdot h)), \quad \alpha(f,g,h)^{-1} \vcomp \alpha(f,g,h) =_* \id_2((f \cdot g) \cdot h)$;
 	\item [\itemref{item:tri}] $\alpha(f, \id_1(b),g) \vcomp (\rho(f) \whiskerr g) =_* f \whiskerl \lambda(f)$;
 	\item [\itemref{item:pent}]$\alpha(f,g,h \cdot i) \vcomp \alpha(f \cdot g, h, i) =_* (f \whiskerl \alpha(g,h,i)) \vcomp \alpha(f,g \cdot h, i) \vcomp (\alpha(f,g,h) \whiskerr i)$.
 \end{description}

\end{defi}

The purpose of displayed bicategories is to give rise to a total bicategory together with a projection pseudofunctor.
They are defined as follows:

\begin{defi}[\coqident{Bicategories.DisplayedBicats.DispBicat}{total_bicat}]
\label{def:total-bicat}
  Given a displayed bicategory $\D$ over a bicategory $\B$, we form the \fat{total bicategory} $\total{\D}$ (or $\total[\B]{\D}$) which has:
  \begin{enumerate}
  \item as 0-cells tuples $(a, \aa)$, where $a : \B$ and $\aa : \dob{\D}{a}$;
  \item as 1-cells tuples $(f, \ff) : (a,\aa) \onecell (b,\bb)$, where $f : a \onecell b$ and $\ff : \dmor{\aa}{\bb}{f}$;
  \item as 2-cells tuples $(\tc, \dtc) : (f, \ff) \mytwocell (g, \gg)$, where $\tc : f \mytwocell g$ and $\dtc : \dtwo{\ff}{\gg}{\tc}$.
  \end{enumerate}
  We also have a \fat{projection pseudofunctor} $\dproj_D : \pseudoF{\total{D}}{\B}$.
\end{defi}

As mentioned before, the bicategory of pointed 1-types is the total bicategory of the following displayed bicategory.

\begin{example}[\coqident{Bicategories.DisplayedBicats.Examples.PointedOneTypes}{p1types_disp}, \Cref{ex:univalent}, \Cref{ex:one_types:univalent} cont'd]
  \label{ex:p1types_disp}
Given a universe $\U$, we build a displayed bicategory of pointed 1-types over the base bicategory of 1-types in $\U$ (\Cref{ex:one_types}).
\begin{itemize}
	\item For 1-type $A$ in $\U$, the objects over $A$ are inhabitants of $A$.
	\item For $f : A \function B$ with $A, B$ 1-types in $\U$, the displayed 1-cells over $f$ from $a$ to $b$ are paths $f(a) = b$.
	\item Given two functions $f, g : A \function B$, a homotopy $p : f \sim g$, two points $a : A$ and $b : B$, and paths $q_f : f(a) = b$ and $q_g : g(a) = b$,
	the 2-cells over $p$ are paths $q_f = p(a) \vcomp q_g$. 
\end{itemize}
The bicategory of pointed 1-types is the total bicategory of this displayed bicategory.
\end{example}

\begin{example}[\coqident{Bicategories.DisplayedBicats.Examples.PointedGroupoid}{pgrpds}]
	\label{ex:pgrpds_disp}
	We define a displayed bicategory of pointed groupoids over the base bicategory $\Grpd$ of groupoids.
	\begin{itemize}
		\item For a groupoid $G$, the objects over $G$ are objects of $G$.
		\item For a functor $F : G_1 \onecell G_2$ between groupoids $G_1$ and $G_2$, the displayed 1-cells over $F$ from $x$ to $y$ are isomorphisms $F(a) \iso b$.
		\item Given two functors $F_1, F_2 : G_1 \function G_2$, a natural transformation $n : F_1 \mytwocell F_2$, two points $x : G_1$ and $y : G_2$, and isomorphisms $q_1 : F_1(x) \iso y$ and $q_2 : F_2(x) = y$,
		the displayed 2-cells over $n$ are paths $p(a) \vcomp q_g = q_f$. 
	\end{itemize}
	The bicategory of pointed groupoids is the total bicategory of this displayed bicategory.
\end{example}

\begin{example}[\coqident{Bicategories.DisplayedBicats.Examples.FullSub}{disp_fullsubbicat}]
\label{ex:dispfullsub}
  Given a bicategory $\B$ and a predicate on 0-cells $\pred : \B_0 \to \hProp$, 
  define a displayed bicategory $\D$ over $\B$ such that $\dob{\D}{x} \eqdef \pred(x)$, and the types of displayed 1-cells and 2-cells are the unit type.
  The total bicategory of $\D$ provides a formal construction of the full subbicategory of $\B$ with cells satisfying $\pred$  introduced in \Cref{ex:fullsub}.
  In particular, a 1-cell in the total bicategory of $\D$ is a pair consisting of a 1-cell from $\B$ and the unique inhabitant of the unit type. Similarly for 2-cells.
\end{example}

We end this section presenting several general constructions of displayed bicategories.
\begin{defi}[Various constructions of displayed bicategories]
  \label{ex:disp_bicat}
  ~
  \begin{enumerate}
  \item (\coqident{Bicategories.DisplayedBicats.Examples.Prod}{disp_dirprod_bicat})
    Given displayed bicategories $\D_1$ and $\D_2$ over a bicategory $\B$,
    we construct the product $\D_1 \times \D_2$ over $\B$. The 0-cells, 1-cells, and 2-cells are pairs of 0-cells, 1-cells, and 2-cells respectively. \label{ex:disp_dirprod}
  \item (\coqident{Bicategories.DisplayedBicats.Examples.Sigma}{sigma_bicat})
    Given a displayed bicategory $\D$ over a base $\B$ and a displayed bicategory $\E$ over $\total{\D}$,
    we construct the \fat{sigma displayed bicategory} $\dsigma[\D]{\E}$ over $\B$ as follows.
    The objects over $a : \B$ are pairs $(\aa, e)$, where $\aa : \dob{\D}{a}$ and $e : \dob{\E}{(a,\aa)}$,
    the morphisms over $f : \mor{a}{b}$ from $(\aa, e)$ to $(\bb, e')$ are pairs $(\ff, \varphi)$, where $\ff : \dmor{\aa}{\bb}{f}$ and $\varphi : \dmor{e}{e'}{(f, \ff)}$, and similarly for 2-cells.\label{ex:disp_sigma}
  \item (\coqident{Bicategories.DisplayedBicats.Examples.Trivial}{trivial_displayed_bicat})
    Every bicategory $\D$ is, in a trivial way, a displayed bicategory
    over any other bicategory $\B$.
    Its total bicategory is the \fat{direct product} $\B \times \D$.
  \item (\coqident{Bicategories.DisplayedBicats.Examples.DisplayedCatToBicat}{disp_cell_unit_bicat})
    We say a displayed bicategory $\D$ over $\B$ is \fat{locally chaotic} if, for each $\alpha : f \mytwocell g$ and $\ff : \dmor{\aa}{\bb}{f}$ and $\gg : \dmor{\aa}{\bb}{g}$, the type $\dtwo{\ff}{\gg}{\alpha}$ is contractible.
    Let $\B$ be a bicategory and suppose we have 
    \begin{itemize}
    \item for each object $a$ in $\B$ a type $\dob{\D}{b}$ of displayed 0-cells;
    \item for each 1-cell $f : a \onecell b$ in $\B$ and for each $\aa : \dob{\D}{a}, \bb : \dob{\D}{b}$ a type $\dmor{\aa}{\bb}{f}$ of displayed 1-cells;
    \item displayed 1-identities $\id_1$ and compositions $(\cdot)$ of displayed 1-cells as in \Cref{def:disp_bicat}.
    \end{itemize}
    Then we have an associated
    \fat{locally chaotic displayed bicategory} $\hat{\D}$ over $\B$ by
    stipulating that the types of 2-cells are the unit type.
    Note that this construction essentially gives a way of obtaining a displayed bicategory from the data of a displayed category~\cite[Def.~3.1, Items~1--4]{AhrensL19}.
    \label{ex:chaotic_disp_bicat}
  \end{enumerate}
\end{defi}

Now let us discuss two more examples of bicategories obtained from displayed bicategories: firstly, monads internal to an arbitrary bicategory and secondly, Kleisli triples.
In \Cref{constr:monadbiequiv}, we construct a biequivalence between the bicategory of Kleisli triples and the bicategory of monads internal to $\Cat$.

\begin{defi}[\coqident{Bicategories.DisplayedBicats.Examples.Monads}{monad}]
\label{def:monads}
Let $\B$ be a bicategory.
Then we define a displayed bicategory $\M(\B)$ over $\B$ such that
\begin{itemize}
	\item The displayed objects over $a : \B$ are monad structures on $a$.
	A monad structure on $a$ consists of a 1-cell $m_a : a \onecell a$ and 2-cells $\eta_a : \id_1(a) \mytwocell m$ and $\mu_a : m \cdot m \mytwocell m$ such that the following diagrams commute
	\[
	\xymatrixcolsep{2.7pc}\xymatrix
	{
		f \cdot \id_1 \twoar[r]^-{f \whiskerl \eta} \twoar[dr]_{\lunitor(f)} &
		f \cdot f \twoar[d]_-{\mu} &
		\id_1 \cdot f \twoar[l]_-{\eta \whiskerr f} \twoar[dl]^{\runitor(f)} \\
		{} & f
	}
	\qquad
	\xymatrixcolsep{1.4pc}\xymatrix
	{
		f \cdot (f \cdot f) \twoar[d]_{f \whiskerl \mu} \twoar[rr]^-{\assoc(f,f,f)} &  & (f \cdot f) \cdot f \twoar[rr]^-{\mu \whiskerr f} & & f \cdot f \twoar[d]^{\mu}\\
		f \cdot f \twoar[rrrr]_-{\mu} & & & & f
	}
	\]
	\item The displayed 1-cells over $f : a \onecell b$ from $(m_a, \eta_a, \mu_a)$ to $(m_b, \eta_b, \mu_b)$ consist of invertible 2-cells $n_f : m_a \cdot f \mytwocell f \cdot m_b$
	such that the following two diagrams commute
	\[
	\xymatrix
	{
		\id_1(a) \cdot f \twoar[r]^-{\eta_a \whiskerr f} \twoar[d]_{\lunitor(f)} & m_a \cdot f \twoar[r]^-{n} & f \cdot m_b\\
		f \twoar[rr]_-{\runitor(f)^{-1}} & & f \cdot \id_1(b) \twoar[u]_-{f \whiskerr \eta_b}
	}
	\]
	\[
	\xymatrix
	{
		(m_a \cdot m_a) \cdot f \twoar[rr]^-{\mu_a \whiskerr f} \twoar[d]_-{\assoc(m_a,m_a,f)^{-1}} & & m_a \cdot f \twoar[rr]^-{n} & & f \cdot m_b\\
		m_a \cdot (m_a \cdot f) \twoar[d]_-{m_a \whiskerl n} & & & & f \cdot (m_b \cdot m_b) \twoar[u]_-{f \whiskerl \mu_b}\\
		m_a \cdot (f \cdot m_b) \twoar[rr]_-{\assoc(m_a,f,m_b)} & & (m_a \cdot f) \cdot m_b \twoar[rr]_-{n \whiskerr m_b} & & (f \cdot m_b) \cdot m_b \twoar[u]_-{\assoc(f,m_b,m_b)^{-1}}
	}
	\]
	\item The displayed 2-cells over $x : f \mytwocell g$ from $n_f$ to $n_g$ are proofs that the following diagrams commute
	\[
	\xymatrix
	{
		m_a \cdot f \twoar[r]^-{m_x \whiskerl x} \twoar[d]_{n_f} & m_a \cdot g \twoar[d]^{n_g} \\
		f \cdot m_b \twoar[r]_-{x \whiskerr m_x} & g \cdot m_b
	}
	\]
\end{itemize}
The total bicategory of $\M(\B)$ is the bicategory of \fat{monads internal to $\B$}.
\end{defi}

Next, we define a bicategory of Kleisli triples (also known as extension systems~\cite{marmolejo2010monads}).
\begin{defi}[\coqident{Bicategories.DisplayedBicats.Examples.KleisliTriple}{kleisli_triple_disp_bicat}]
\label{def:ktriple}
We define a displayed bicategory $\K$ over $\Cat$ such that
\begin{itemize}
	\item The displayed objects over $C$ are Kleisli triples over $C$. These consist of a function $M : C_0 \rightarrow C_0$, for each $a : C$ an arrow $\eta(a) : a \onecell M(a)$, and for each arrow $f : a \onecell M(b)$, an arrow $f^* : M(a) \onecell M(b)$ such that the usual laws hold.
	\item The displayed 1-cells over a functor $F : C \onecell D$ from $M_C$ to $M_D$ consists of isomorphisms $F_M$ from $M_D(F(a))$ to $F(M_C(a))$ for each $a : C_0$ such that the usual laws hold.
	\item The displayed 2-cells over $n : F \mytwocell G$ from $F_M$ to $G_M$ are equalities
	\[
	F_M(a) \cdot n(M_C(a)) = M_D(n(a)) \cdot G_M(a).
	\]
\end{itemize}
The total bicategory of $\K$ is the \fat{bicategory of Kleisli triples}.
\end{defi}

\section{Displayed Univalence}
\label{sec:disp_univalence}
Given a bicategory $\B$ and a displayed bicategory $\D$ over $\B$, our goal is to prove the univalence of $\total{\D}$ from conditions on $\B$ and $\D$.
For that, we develop the notion of \emph{univalent displayed bicategories}.
We start by defining displayed versions of invertible 2-cells.

\begin{defi}[\coqident{Bicategories.DisplayedBicats.DispBicat}{is_disp_invertible_2cell}]
Given are a bicategory $\B$ and a displayed bicategory $\D$ over $\B$.
Suppose we have objects $a, b : \B_0$, two 1-cells $f, g : \B_1(a,b)$, and an invertible 2-cell $\tc : \B_2(f,g)$.
Suppose that we also have $\aa : \dob{D}{a}$, $\bb : \dob{D}{b}$, $\ff : \dmor{\aa}{\bb}{f}$, $\gg : \dmor{\aa}{\bb}{g}$, and $\dtc : \dtwo{\ff}{\gg}{\tc}$.
Then we say $\dtc$ is \fat{invertible} if we have $\dtcB : \dtwo{\gg}{\ff}{\tc^{-1}}$ such that $\dtc \vcomp \dtcB$ and $\dtcB \vcomp \dtc$ are identities modulo transport over the corresponding identity laws of $\tc$.

A \fat{displayed invertible 2-cell over $\tc$}, where $\tc$ is an invertible 2-cell, is a pair of a displayed 2-cell $\dtc$ over $\tc$ and a proof that $\dtc$ is invertible.
The type of displayed invertible 2-cells from $\ff$ to $\gg$ over $\tc$ is denoted by $\diso{\ff}{\tc}{\gg}$.
\end{defi}
Being a displayed invertible 2-cell is a proposition and the displayed 2-cell $\id_2(\ff)$ over $\id_2(f)$ is invertible.
Next we define displayed adjoint equivalences.

\begin{defi}[\coqident{Bicategories.DisplayedBicats.DispAdjunctions}{disp_left_adjoint_equivalence}]
Given are a bicategory $\B$ and a displayed bicategory $\D$ over $\B$.
Suppose we have objects $a, b : \B_0$ and a 1-cell $f : \B_1(a,b)$ together with an adjoint equivalence structure $A$ on $f$.
We write $r$, $\eta$, $\varepsilon$ for the right adjoint, unit, and counit of $f$ respectively.
Furthermore, suppose that we have $\aa : \dob{\D}{a}$,$\bb : \dob{\D}{b}$, and $\ff : \dmor{\aa}{\bb}{f}$.
A \fat{displayed adjoint equivalence structure} on $\ff$ consists of
\begin{itemize}
	\item A displayed 1-cell $\rr : \dmor{\bb}{\aa}{r}$;
	\item An invertible displayed 2-cell $\dtwo{\did_1(\aa)}{\ff \cdot \rr}{\eta}$;
	\item An invertible displayed 2-cell $\dtwo{\rr \cdot \ff}{\did_1(\bb)}{\varepsilon}$.
\end{itemize}
In addition, two laws reminiscent of those in \Cref{def:adjequiv} need to be satisfied.

A \fat{displayed adjoint equivalence} over the adjoint equivalence $A$ is a pair of a displayed 1-cell $\ff$ over $f$ together with a displayed adjoint equivalence structure on $\ff$.
The type of displayed adjoint equivalences from $\aa$ to $\bb$ over $f$ is denoted by $\dadjequiv{\aa}{f}{\bb}$.
\end{defi}
The displayed 1-cell $\id_1(\aa)$ is a displayed adjoint equivalence over $\id_1(a)$.

Using these definitions, we define univalence of displayed bicategories similarly to univalence for ordinary bicategories.
Again we separate it in a local and global condition.

\begin{defi}[\coqfile{Bicategories.DisplayedBicats}{DispUnivalence}]
  Let $\D$ be a displayed bicategory over $\B$.
\begin{enumerate}
\item
  Let $a, b : \B$, and $\aa : \dob{\D}{a}, \bb : \dob{\D}{b}$.
  Let $f, g : \mor{a}{b}$, let $p : f = g$, and let $\ff$ and $\gg$ be displayed morphisms over $f$ and $g$ respectively.
  Then we define a function
  \[
  \dispidtoiso^{2,1}_{p,\ff,\gg} : \ff \depeq[p] \gg \function \diso{\ff}{\idtoiso^{2,1}_{f,g}(p)}{\gg}
  \]
  sending $\idpath$ to the identity displayed isomorphism.
  We say that $\D$ is \fat{locally univalent} if the function $\dispidtoiso^{2,1}_{p,\ff,\gg}$ is an equivalence for each $p$, $\ff$, and $\gg$.
\item
Let $a, b : \B$, and $\aa : \dob{\D}{a}, \bb : \dob{\D}{b}$.
Given $p : a = b$, we define a function
\[
\dispidtoiso^{2,0}_{p,\aa,\bb} : \aa \depeq[p] \bb \function \dadjequiv{\aa}{\idtoiso^{2,0}_{a,b}(p)}{\bb}
\]
sending $\idpath$ to the identity displayed adjoint equivalence.
We say that $\D$ is \fat{globally univalent} if the function $\dispidtoiso^{2,0}_{p,\aa,\bb}$ is an equivalence for each $p$, $\aa$, and $\bb$.
\item
  (\coqident{Bicategories.DisplayedBicats.DispUnivalence}{disp_univalent_2})
  We call $\D$ \fat{univalent} if it is both locally and globally univalent.
\end{enumerate}
\end{defi}

The following result states that univalence of the total bicategory can be proved from univalence of the base and of the displayed part.
This is the bicategorical version of the analogous result for 1-categories shown in~\cite[Theorem~7.4]{AhrensL19}, which in turn generalizes the Structure Identity Principle~\cite[Theorem 9.8.2]{hottbook}.

\begin{theorem}[\coqident{Bicategories.DisplayedBicats.DispUnivalence}{total_is_univalent_2}]
\label{th:univalence_total}
Let $\B$ be a bicategory and let $\D$ be a displayed bicategory over $\B$.  Then
\begin{enumerate}
\item
$\total{\D}$ is locally univalent if $\B$ is locally univalent and $\D$ is locally univalent;\label{th:univalence_total_local}
\item
$\total{\D}$ is globally univalent if $\B$ is globally univalent and $\D$ is globally univalent.\label{th:univalence_total_global}
\end{enumerate}
\end{theorem}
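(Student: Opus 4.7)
The plan is to reduce each part to the corresponding displayed univalence condition by factoring $\idtoiso$ on the total bicategory through a type of pairs, using the standard characterization of equality in a $\Sigma$-type.

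For the local case, I would first establish the characterization
\[
\invtwocell_{\total{\D}}((f,\ff),(g,\gg)) \simeq \Sum (\tc : \invtwocell(f,g)).\diso{\ff}{\tc}{\gg},
\]
which essentially unfolds the definition of invertibility in $\total{\D}$: a two-sided inverse $(\tcB,\dtcB)$ of $(\tc,\dtc)$ is precisely a pair of an inverse $\tcB$ of $\tc$ in $\B$ and an inverse $\dtcB$ of $\dtc$ over $\tcB$ (where the invertibility equations transport correctly because being an invertible 2-cell is a proposition). Combined with the equivalence $(f,\ff) = (g,\gg) \simeq \Sum (p : f = g).\ff \depeq[p] \gg$ coming from the characterization of paths in sigma types, the map $\idtoiso^{2,1}$ on $\total{\D}$ becomes the fiberwise product of $\idtoiso^{2,1}$ on $\B$ and $\dispidtoiso^{2,1}$ on $\D$. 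Both factors are equivalences by hypothesis, so \Cref{th:univalence_total_local} follows.

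The global case proceeds analogously, with the key step being the analogous characterization
\[
\AdjEquiv((a,\aa),(b,\bb)) \simeq \Sum (f : a \adjequiv b).\dadjequiv{\aa}{f}{\bb}.
\]
Here an adjoint equivalence $(f,\ff)$ in $\total{\D}$ unpacks into a base 1-cell $f$ with its right adjoint, unit, counit, and triangle identities, together with the displayed analog: a right adjoint $\rr$ over $r$, invertible displayed 2-cells over $\eta$ and $\varepsilon$, and displayed triangle identities. Once this correspondence is established, combine with $(a,\aa)=(b,\bb) \simeq \Sum (p : a = b).\aa \depeq[p] \bb$ and observe that $\idtoiso^{2,0}$ on the total bicategory factors as the fiberwise pairing of $\idtoiso^{2,0}$ on $\B$ (an equivalence by hypothesis) and $\dispidtoiso^{2,0}$ on $\D$ (an equivalence fiberwise by hypothesis). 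Hence $\idtoiso^{2,0}_{\total{\D}}$ is an equivalence, yielding \Cref{th:univalence_total_global}.

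The main obstacle is the characterization of adjoint equivalences in $\total{\D}$. Unlike invertible 2-cells, adjoint equivalence structure is not merely propositional, so the correspondence with a pair of base plus displayed structures requires matching up all constituents (right adjoint, unit, counit, and the two triangle laws) and carefully tracking transports along the base triangle identities. A clean way to discharge this is to split the type of adjoint equivalences as an iterated $\Sigma$-type and apply the sigma-equivalence lemma at each layer, reducing everything to the definitional data of $\dadjequiv{\aa}{f}{\bb}$. Once that characterization is in place, the rest of both parts is routine.
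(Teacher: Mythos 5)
Your proposal is correct and follows essentially the same route as the paper: both parts factor $\idtoiso$ through the characterization of paths in a $\Sigma$-type and the characterization of invertible 2-cells (resp.\ adjoint equivalences) in $\total{\D}$ as pairs of a base structure and a displayed structure over it, reducing each factor to the corresponding (displayed) univalence hypothesis. You also correctly single out the characterization of adjoint equivalences in the total bicategory as the main technical step, which is exactly what the paper identifies.
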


\begin{proof}
The main idea behind the proof is to characterize invertible 2-cells in the total bicategory as pairs of an invertible 2-cell $p$ in the base bicategory, and a displayed invertible 2-cell over $p$.
Concretely, for the local univalence of $\D$, we factor $\idtoiso^{2,1}$ as a composition of the following equivalences:
\[
\xymatrix@C=50pt
{
	(f , \ff) = (g , \gg) \ar[d]_{w_1}^{\sim}
	\ar[r]^-{\idtoiso^{2,1}}
	& \invtwocell\left((f,\ff),(g,\gg)\right)\\
	\Sum (p : f = g). \ff \depeq[p] \gg \ar[r]_-{w_2}^{\sim} & \Sum \left(p : \invtwocell(f,g)\right). \diso{\ff}{p}{\gg} \ar[u]_{w_3}^{\sim}
}
\]
The function $w_1$ is just a characterization of paths in a sigma type.
The function $w_2$ turns equalities into (displayed) invertible 2-cells, and it is an equivalence by local univalence of $\B$ and displayed local univalence of $\D$.
Finally, the function $w_3$ characterizes invertible 2-cells in the total bicategory.

The proof is similar in the case of global univalence.
The most important step is the characterization of adjoint equivalences in the total bicategory.
\[(a,\aa) \adjequiv (b, \bb) \xrightarrow{\sim}
\Sum (p : a \adjequiv b). \dadjequiv{\aa}{p}{\bb}. \qedhere \]
\end{proof}

To check displayed univalence, it suffices to prove the condition in the case where $p$ is reflexivity.
This step, done by path induction, simplifies some proofs of displayed univalence.

\begin{proposition}
  Given a displayed bicategory $\D$ over $\B$, then $\D$ is univalent if the following functions are equivalences:
  \begin{itemize}
  \item (\coqident{Bicategories.DisplayedBicats.DispUnivalence}{fiberwise_local_univalent_is_univalent_2_1})
    \[
      \dispidtoiso^{2,1}_{\idpath(f),\ff,\fff} : \ff = \fff \function \diso{\ff}{\id_2(f)}{\fff}
    \]
  \item (\coqident{Bicategories.DisplayedBicats.DispUnivalence}{fiberwise_univalent_2_0_to_disp_univalent_2_0})
    \[
      \dispidtoiso^{2,0}_{\idpath(a),\aa,\aaa} : \aa = \aaa \function \dadjequiv{\aa}{\id_1(a)}{\aaa}
    \]
  \end{itemize}
\end{proposition}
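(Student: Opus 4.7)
The plan is to handle the two halves of displayed univalence independently, each by a single path induction in the base bicategory. This reduces the general ``displayed idtoiso is an equivalence'' statement to precisely the hypothesis about reflexivity, using that the displayed $\dispidtoiso$ maps are themselves defined by path induction to send $\idpath$ to identities.

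For local univalence, I fix $a, b : \B$, $\aa : \dob{\D}{a}$, $\bb : \dob{\D}{b}$, and $f : \mor{a}{b}$, and aim to show that for all $g : \mor{a}{b}$, $p : f = g$, $\ff : \dmor{\aa}{\bb}{f}$ and $\gg : \dmor{\aa}{\bb}{g}$, the map $\dispidtoiso^{2,1}_{p,\ff,\gg}$ is an equivalence. By path induction on $p$, it suffices to treat $g \defeq f$ and $p \defeq \idpath(f)$. In that case, $\ff \depeq[\idpath(f)] \gg$ unfolds to $\ff = \gg$, while $\idtoiso^{2,1}_{f,f}(\idpath(f))$ computes to $\id_2(f)$ by the definition of $\idtoiso^{2,1}$, and $\dispidtoiso^{2,1}_{\idpath(f),\ff,\gg}$ computes to the fiberwise map $\ff = \gg \function \diso{\ff}{\id_2(f)}{\gg}$. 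By hypothesis, this is an equivalence (taking $\fff \defeq \gg$).

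For global univalence, I proceed analogously. Fixing $a : \B$ and $\aa : \dob{\D}{a}$, I do path induction on $p : a = b$ to reduce to $b \defeq a$, $p \defeq \idpath(a)$. By the definition of $\idtoiso^{2,0}$, we have $\idtoiso^{2,0}_{a,a}(\idpath(a)) = \id_1(a)$ and $\aa \depeq[\idpath(a)] \bb$ becomes $\aa = \bb$, so $\dispidtoiso^{2,0}_{\idpath(a),\aa,\bb}$ is exactly the fiberwise map given by hypothesis (taking $\aaa \defeq \bb$), which is an equivalence.

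The proof is essentially formal once one isolates the key observation: because the displayed $\dispidtoiso$ maps are \emph{defined} by path induction to send reflexivity to the displayed identity witness, path induction on the base equality immediately collapses the general statement to the fiberwise one. The only subtlety worth flagging is making sure that the base $\idtoiso^{2,1}$ and $\idtoiso^{2,0}$ reduce definitionally (or at least propositionally) on $\idpath$; this is guaranteed by the way they are defined in \Cref{def:univalence}, so no transport bookkeeping is needed. Hence the main ``obstacle'' is merely setting up the path induction in a family with enough parameters generalized ($g$ and $\gg$ in the local case, $b$ and $\bb$ in the global case) so that the induction is applicable.
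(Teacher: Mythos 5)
Your proposal is correct and matches the paper's own argument, which is simply path induction on the base identity $p$ to reduce the general statement to the fiberwise hypothesis at $\idpath$, using that $\idtoiso$ and $\dispidtoiso$ are defined by path induction to send reflexivity to the identity witnesses. The point you flag about generalizing $g,\gg$ (resp.\ $b,\bb$) so that the induction applies is exactly the only content of the proof.
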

Now we establish the univalence of several examples.

\begin{example}
\label{ex:univalence}
  The following bicategories and displayed bicategories are univalent:
  \begin{enumerate}
  \item The category of pointed 1-types (see \Cref{ex:p1types_disp}) is univalent
    (\coqident{Bicategories.DisplayedBicats.Examples.PointedOneTypes}{p1types_univalent_2}).
  \item The full subbicategory (see \Cref{ex:fullsub}) of a univalent bicategory is univalent
    (\coqident{Bicategories.DisplayedBicats.Examples.FullSub}{is_univalent_2_fullsubbicat}).
  \item The product of univalent displayed bicategories (\Cref{ex:disp_bicat}, \Cref{ex:disp_dirprod}) is univalent
    (\coqident{Bicategories.DisplayedBicats.Examples.Prod}{is_univalent_2_dirprod_bicat}).
\end{enumerate}
\end{example}

For the sigma construction, we give two conditions for the univalence of the total bicategory.
If we have univalent displayed bicategories $\D_1$ and $\D_2$ over $\B$ and $\total{\D_1}$ respectively,
then we can either show the univalence of $\total{(\dsigma[\D_1]{\D_2})}$ directly
or we can show the displayed univalence of $\dsigma[\D_1]{\D_2}$.
Note that the second property could be necessary as an intermediate step for proving the univalence of a more complicated bicategory.
For the proof of displayed univalence of $\dsigma[\D_1]{\D_2}$, we need two assumptions on both displayed bicategories.

\begin{defi}[\coqident{Bicategories.DisplayedBicats.DispBicat}{disp_locally_groupoid}]
\label{def:locally_groupoid}
A displayed bicategory is \fat{locally groupoidal} if all its displayed 2-cells are invertible.
\end{defi}

\begin{defi}[\coqident{Bicategories.DisplayedBicats.DispBicat}{disp_2cells_isaprop}]
\label{def:locally_prop}
A displayed bicategory $\D$ over a bicategory $\B$ is called \fat{locally propositional} if the type $\dtwo{\ff}{\gg}{\tc}$ of displayed 2-cells over $\tc$ is a proposition.
\end{defi}

\begin{proposition}
\label{prop:disp_univ_sigma}
Let $\D_1$ and $\D_2$ be univalent displayed bicategories over univalent bicategories $\B$ and $\total \D_1$ respectively.
\begin{enumerate}
	\item The bicategory $\total{(\dsigma[\D_1]{\D_2})}$ (\Cref{ex:disp_bicat}, \Cref{ex:disp_sigma}) is univalent
	(\coqident{Bicategories.DisplayedBicats.Examples.Sigma}{sigma_is_univalent_2}).
	\item If $\D_1$ and $\D_2$ are locally propositional and groupoidal, then $\dsigma[\D_1]{\D_2}$ is displayed univalent (\coqident{Bicategories.DisplayedBicats.Examples.Sigma}{sigma_disp_univalent_2_with_props}) \label{item:univ_sigma}.
\end{enumerate}
\end{proposition}
We are not sure whether \Cref{item:univ_sigma} of \Cref{{prop:disp_univ_sigma}} is as strong as it can be---it might be possible to weaken the assumptions of $\D_1$ and $\D_2$ being locally propositional and groupoidal.
However, this would make the proof significantly more complicated.
In our examples these assumptions are satisfied, and thus the statement of \Cref{{prop:disp_univ_sigma}}, \Cref{item:univ_sigma} is sufficient for our purposes.

Lastly, we give a condition for when a locally chaotic displayed bicategory is univalent.
\begin{proposition}[\coqident{Bicategories.DisplayedBicats.Examples.DisplayedCatToBicat}{disp_cell_unit_bicat_univalent_2}]
  Let $\B$ be a univalent bicategory, and let $\D$ be a locally chaotic displayed bicategory (as in \Cref{ex:disp_bicat}, \Cref{ex:chaotic_disp_bicat}).
  Assume that for any $a : \B$, the type $\dob{\D}{a}$ is a set, and for any $\aa : \dob{\D}{a}, \bb : \dob{\D}{b}, f : a \onecell b$, the type $\dmor{\aa}{\bb}{f}$ is a proposition.
  Then $\D$ is univalent if we have a function in the opposite direction of $\dispidtoiso^{2,0}$.
\end{proposition}

\section{Displayed Constructions}
\label{sec:dispconstr}
The idea of building bicategories by layering displayed bicategories does not only allow for modular proofs of univalence,
but also for the modular construction of maps between bicategories, \eg pseudofunctors and biequivalences.
In this section, we introduce the notions of displayed pseudofunctor and biequivalence, and use them to build biequivalences.
The first example we look at, extends the biequivalence between 1-types and univalent groupoids in \Cref{ex:biequiv-grpds} to their pointed variants (\Cref{ex:p1types_disp} and \Cref{ex:pgrpds_disp}).

\begin{problem}
\label{prob:biequivpgrpds}
To construct a biequivalence between pointed 1-types and pointed groupoids.
\end{problem}

To construct the desired biequivalence, we first define \emph{displayed biequivalences} over a given biequivalence in the base
and we show that it gives rise to a total biequivalence on the total bicategories.
Since biequivalences are defined using pseudofunctors, pseudotransformations, and invertible modifications,
we first need to define displayed analogues of these.

\begin{defi}[\coqident{Bicategories.DisplayedBicats.DispPseudofunctor}{disp_psfunctor}]
Suppose we have bicategories $\B$ and $\C$, displayed bicategories $\D_1$ and $\D_2$ over $\B$ and $\C$ respectively, and a pseudofunctor $F : \pseudoF{\B}{\C}$.
Then a \fat{displayed pseudofunctor} $\FF$ from $\D_1$ to $\D_2$ over $F$ consists of
\begin{itemize}
	\item For each $a : \B$ a function $\FF_0 : \D_1(a) \rightarrow \D_2(F(a))$;
	\item For every 1-cell $f : a \onecell b$ and all displayed objects $\aa : \D_1(a)$ and $\bb : \D_1(b)$, a function sending 
	$f : \dmor{\aa}{\bb}{f}$ to $\FF_1(f) : \dmor{\FF_0(\aa)}{\FF_0(\bb)}{F(f)}$;
	\item For each 2-cell $\tc : f \mytwocell g$ and displayed 1-cells $\ff : \dmor{\aa}{\bb}{f}$ and $\gg : \dmor{\aa}{\bb}{g}$, a function
	sending $\thetatheta : \dtwo{\ff}{\gg}{\tc}$ to $\FF_2(\thetatheta) : \dtwo{\FF_1(\ff)}{\FF_1(\gg)}{F(\tc)}$;
	\item For all objects $a : \B$ and displayed objects $\aa : \D_1(a)$, we have a displayed invertible 2-cell
	$\identitor{\FF}(\xx) : \dtwo{\id_1(\FF_0(\xx))}{\FF_1(\id_1(\xx))}{\identitor{F}(x)}$;
	\item For all displayed 1-cells $\ff : \dmor{\aa}{\bb}{f}$ and $\gg : \dmor{\bb}{\cc}{g}$, we have a displayed invertible 2-cell
	$\compositor{\FF}(\ff,\gg) : \dtwo{\FF_1(\ff) \cdot \FF_1(\gg)}{\FF_1(\ff \cdot \gg)}{\compositor{F}(f,g)}$.
\end{itemize}
In addition, several laws similar to those in \Cref{def:psfun} need to hold.
They are just dependent variants of them and they hold over the corresponding non-dependent law.
Since the required laws are obtained in the same way as in \Cref{def:disp_bicat}, we do not show them here and instead refer the interested reader to the formalization.
We denote the type of displayed pseudofunctors from $\D_1$ to $\D_2$ over $F$ by $\disppsfun{\D_1}{\D_2}{F}$.
\end{defi}

\begin{defi}[\coqident{Bicategories.DisplayedBicats.DispTransformation}{disp_pstrans}]
Suppose that we have bicategories $\B$ and $\C$, pseudofunctors $F, G : \pseudoF{\B}{\C}$, and a pseudotransformation $\eta : \pstrans{F}{G}$.
Suppose furthermore that we have displayed bicategories $\D_1$ and $\D_2$ over $\B$ and $\C$, respectively, and displayed pseudofunctors $\FF$ and $\GG$ from $\D_1$ to $\D_2$ over $F$ and $G$, respectively.
Then a \fat{displayed pseudotransformation} $\etaeta$ over $\eta$ from $\FF$ to $\GG$ is given by
\begin{itemize}
	\item For each $x : \B$ and $\xx : \D_1(x)$ a displayed 1-cell $\etaeta_0(\xx) : \dmor{\FF_0(\xx)}{\GG_0(\xx)}{\eta_0(x)}$;
	\item For all 1-cells $f : x \onecell y$, displayed objects $\xx : \D_1(x)$ and $\yy : \D_1(y)$ and displayed 1-cells $\ff : \dmor{\xx}{\yy}{f}$,
	a displayed invertible 2-cell $\etaeta_1(\ff) : \dtwo{\etaeta_0(\xx) \cdot \FF_2(\ff)}{\FF_1(\ff) \cdot \etaeta_0(\yy)}{\eta_1(f)}$.
\end{itemize}
Again laws similar to those in \Cref{def:pstrans} need to hold and again they are derived similar to those in \Cref{def:disp_bicat}.
We denote the type of displayed pseudotransformations from $\FF$ to $\GG$ over $\eta$ by $\disppstrans{\FF}{\GG}{\eta}$.
\end{defi}

\begin{defi}[\coqident{Bicategories.DisplayedBicats.DispModification}{disp_modification}]
\label{def:disp_modification}
Suppose that we have bicategories $\B$ and $\C$, pseudofunctors $F, G : \pseudoF{\B}{\C}$, pseudotransformations $\eta, \theta : \pstrans{F}{G}$, and a modification $m : \modif{\eta}{\theta}$.
In addition, we are given displayed bicategories $\D_1$ and $\D_2$ over $\B$ and $\C$ respectively, displayed pseudofunctors $\FF : \disppsfun{\D_1}{\D_2}{F}$ and $\GG : \disppsfun{\D_1}{\D_2}{G}$, and displayed pseudotransformations $\etaeta : \disppstrans{\FF}{\GG}{\eta}$ and $\thetatheta : \disppstrans{\FF}{\GG}{\theta}$.
Then a \fat{displayed modification} from $\etaeta$ to $\thetatheta$ over $m$ is given by a displayed 2-cell $\dtwo{\etaeta_0(\xx)}{\thetatheta_0(\xx)}{m(x)}$ for each $x : \B$ and $\xx : \D_1(x)$.
In addition, the dependent version of the law in \Cref{def:modif} needs to hold.
We denote the type of displayed modifications from $\etaeta$ to $\thetatheta$ over $m$ by $\dispmodif{\etaeta}{\thetatheta}{m}$.  
\end{defi}
In order to formulate displayed biequivalence, we need an invertible version of \Cref{def:disp_modification}.
\begin{defi}[\coqident{Bicategories.DisplayedBicats.DispModification}{disp_invmodification}]
  A \fat{displayed invertible modification} over an invertible modification $m : \modif{\eta}{\theta}$ is a displayed modification
  $
    \mm : \dispmodif{\etaeta}{\thetatheta}{m}
  $
such that \[\mm(\xx) : \dtwo{\etaeta_0(\xx)}{\thetatheta_0(\xx)}{m(x)}\] is invertible for each $x : \B$ and $\xx : \D_1(x)$.
\end{defi}

Each of the discussed notions also has a total version.
These are constructed similarly to how the total bicategory is constructed in \Cref{def:total-bicat}.
\begin{problem}
\label{prob:totalgadget}
For each displayed gadget we discussed before, we have a total version.
\begin{itemize}
	\item (\coqident{Bicategories.DisplayedBicats.DispPseudofunctor}{total_psfunctor}) Given a displayed pseudofunctor $\FF : \disppsfun{\D_1}{\D_2}{F}$, to construct a pseudofunctor $\total{\FF} : \pseudoF{\total{\D_1}}{\total{\D_2}}$.
	\item (\coqident{Bicategories.DisplayedBicats.DispTransformation}{total_pstrans}) Given a displayed pseudotransformation $\etaeta : \disppstrans{\FF}{\GG}{\eta}$, to construct a pseudotransformation $\total{\etaeta} : \pstrans{\total{\FF}}{\total{\GG}}$.
	\item (\coqident{Bicategories.DisplayedBicats.DispModification}{total_invmodification}) Given a displayed invertible modification $\mm$ from $\etaeta$ to $\thetatheta$, to construct an invertible modification $\total{\mm} : \modif{\total{\etaeta}}{\total{\thetatheta}}$.
\end{itemize}
\end{problem}

\begin{construction}{prob:totalgadget}
\label{constr:totalgadget}
Each of the constructions is defined componentwise.
For example, $\total{\FF}$ on an object $(x, \xx)$ is defined to be $(F(x), \FF(\xx))$.
\end{construction}

To define displayed biequivalences, we need composition and identity of displayed pseudofunctors and pseudotransformations:

\begin{defi}
Suppose that $\B_1$, $\B_2$, and $\B_3$ are bicategories
and that $\D_1$, $\D_2$, and $\D_3$ are displayed bicategories over $\B_1$, $\B_2$, and $\B_3$, respectively.
In addition, let $F : \pseudoF{\B_1}{\B_2}$ and $G : \pseudoF{\B_2}{\B_3}$ be pseudofunctors
and suppose we have displayed pseudofunctors $\FF$ from $\D_1$ to $\D_2$ and $\GG$ from $\D_2$ to $\D_3$ over $F$ and $G$, respectively.
\begin{itemize}
	\item (\coqident{Bicategories.DisplayedBicats.DispPseudofunctor}{disp_pseudo_id})
	We have the \fat{identity displayed pseudofunctor} $\id(\D_1) : \disppsfun{\D_1}{\D_1}{\id(\B_1)}$.
	\item (\coqident{Bicategories.DisplayedBicats.DispPseudofunctor}{disp_pseudo_comp})
	We have a \fat{composition displayed pseudofunctor} $\FF \cdot \GG : \disppsfun{\D_1}{\D_3}{F \cdot G}$.
	\item (\coqident{Bicategories.DisplayedBicats.DispTransformation}{disp_id_pstrans})
	We have a \fat{displayed identity pseudotransformation} $\id_1(\FF) : \disppstrans{\FF}{\FF}{\id_1(F)}$.
	\item (\coqident{Bicategories.DisplayedBicats.DispTransformation}{disp_comp_pstrans})
	Suppose, we also have pseudofunctors $F', F'' : \pseudoF{\B_1}{\B_2}$ and pseudotransformations $\eta : \pstrans{F}{F'}$ and $\theta : \pstrans{F'}{F''}$.
	If we also have displayed pseudofunctors $\FFo : \disppsfun{\D_1}{\D_2}{F'}$ and $\FFt : \disppsfun{\D_1}{\D_2}{F''}$
	and displayed pseudotransformations $\etaeta : \disppstrans{\FF}{\FFo}{\eta}$ and $\thetatheta : \disppstrans{\FFo}{\FFt}{\theta}$,
	then we have a \fat{composition displayed pseudotransformation} $\etaeta \vcomp \thetatheta : \disppstrans{\FF}{\FFt}{\eta \vcomp \theta}$.
\end{itemize}
\end{defi}

Now we have developed sufficient displayed machinery to define displayed biequivalences.

\begin{defi}[\coqident{Bicategories.DisplayedBicats.DispBiequivalence}{disp_is_biequivalence_data}]
\label{example:disp_examples}
Let $\D_1$ and $\D_2$ be displayed bicategories over bicategories $\B$ and $\C$, respectively.
Suppose that we have a biequivalence $L : \pseudoF{\B}{\C}$.
We use the naming from \Cref{def:biequiv}.
Then a \fat{displayed biequivalence} from $\D_1$ to $\D_2$ over $L$ consists of
\begin{itemize}
	\item A displayed pseudofunctor $\LL : \disppsfun{\D_1}{\D_2}{L}$;
	\item A displayed pseudofunctor $\RR : \disppsfun{\D_2}{\D_1}{R}$;
	\item Displayed pseudotransformations $\etaeta : \disppstrans{\RR \cdot \LL}{\id(\D_2)}{\eta}$ and $\widedisp{\eta_i} : \disppstrans{\id(\D_2)}{\RR \cdot \LL}{\eta_i}$;
	\item Displayed pseudotransformation $\epseps : \disppstrans{\LL \cdot \RR}{\id(\D_1)}{\epsilon}$ and $\widedisp{\epsilon_i} : \disppstrans{\id(\D_1)}{\LL \cdot \RR}{\epsilon_i}$;
	\item Displayed invertible modifications
	\[\widedisp{m_1} : \dispmodif{\etaeta \vcomp \widedisp{\eta_i}}{\id_1(\RR \cdot \LL)}{m_1}
	\quad \quad
	\widedisp{m_2} : \dispmodif{\widedisp{\eta_i} \vcomp \etaeta}{\id_1(\id(\D_2))}{m_2}\]
	\item Displayed invertible modifications
	\[\widedisp{m_3} : \dispmodif{\epseps \vcomp \widedisp{\epsilon_i}}{\id_1(\LL \cdot \RR)}{m_3}
	\quad \quad
	\widedisp{m_4} : \dispmodif{\widedisp{\epsilon_i} \vcomp \epseps}{\id_1(\id(\D_1))}{m_4}\]
\end{itemize}
\end{defi}

Note that the total variant of each example in \Cref{example:disp_examples} is its non-displayed analogue.
Displayed biequivalences give rise to total biequivalences between their associated total bicategories.

\begin{problem}
\label{prob:totalbiequiv}
Let $\B$ and $\C$ be bicategories and suppose we have a biequivalence $L : \pseudoF{\B}{\C}$.
If we have displayed bicategories $\D_1$ and $\D_2$ over $\B$ and $\C$,
then each displayed biequivalence $\LL$ from $\D_1$ to $\D_2$ over $L$ gives rise to a biequivalence $\total{\LL}$ from $\total{\D_1}$ to $\total{\D_2}$.
\end{problem}

\begin{construction}[\coqident{Bicategories.DisplayedBicats.DispBiequivalence}{total_is_biequivalence}]{prob:totalbiequiv}
The pseudofunctors, pseudotransformations, and invertible modifications are constructed using \Cref{constr:totalgadget}.
\end{construction}

Note that to construct a displayed biequivalence, one must show several laws and construct multiple displayed invertible 2-cells.
If the involved displayed bicategories are locally groupoidal (\Cref{def:locally_groupoid}) and locally propositional (\Cref{def:locally_prop}),
then constructing a displayed biequivalence is simpler.
This is because all the necessary laws follow immediately from local propositionality and all the involved displayed 2-cells are invertible.
With all this in place, we finally show how to construct the desired biequivalence in \Cref{prob:biequivpgrpds} with displayed machinery.

\begin{construction}[\coqident{Bicategories.DisplayedBicats.Examples.PointedGroupoid}{disp_biequiv_data_unit_counit_path_pgroupoid}]{prob:biequivpgrpds}
By \Cref{prob:totalbiequiv} it suffices to construct a displayed biequivalence.
We only show how to construct the required displayed pseudofunctor from points on 1-types to points on groupoids.
\begin{itemize}
	\item Given a 1-type $X$ and a point $x : X$, we need to give an object of $\pathgroupoid(X)$, for which we take $x$.
	\item If we have 1-types $X$ and $Y$ with points $x : X$ and $y : Y$, and a function $f : X \onecell Y$ with a path $p_f : f(x) = y$,
	then we need to construct an isomorphism between $f(x)$ and $y$ in $\pathgroupoid(X)$.
	It is given by $p_f$.
	\item Suppose we have 1-types $X$ and $Y$ with points $x : X$ and $y : Y$.
	Furthermore, suppose we have a homotopy $s : f \sim g$ between functions $f, g : X \onecell Y$, paths $p_f : f(x) = y$ and $p_g : g(x) = y$,
	and a path $h : p_f = s(x) \vcomp p_g$.
	Then the required displayed 2-cell is the inverse of $h$.	
\end{itemize}
The compositor and the identitor are both the reflexivity path.
\end{construction}

As a final example, we construct a biequivalence between the bicategory of monads internal to $\Cat$ and the bicategory of Kleisli triples.

\begin{problem}
\label{prob:monadbiequiv}
To construct a biequivalence between monads and Kleisli triples.
\end{problem}

\begin{construction}[\coqident{Bicategories.DisplayedBicats.Examples.MonadKtripleBiequiv}{Monad_biequiv_Ktriple}]{prob:monadbiequiv}
\label{constr:monadbiequiv}
Note that the bicategory of monads and Kleisli triples are defined as the total bicategories of \Cref{def:monads} and \Cref{def:ktriple}, respectively.
Hence, by \Cref{prob:totalbiequiv}, it is sufficient to construct a displayed biequivalence between the respective displayed bicategories.
For the details on this construction, we refer the reader to the formalization.
\end{construction}

\section{Univalence of Complicated Bicategories}
\label{sec:examples}
In this section, we demonstrate the power of displayed bicategories on a number of complicated examples.
We show the univalence of the bicategory of pseudofunctors between univalent bicategories and of univalent categories with families.
In addition, we give two constructions to define univalent bicategories of algebras.

\subsection{Pseudofunctors}
\label{sec:pseudo}
As promised, we use displayed bicategories to prove \Cref{thm:psfunct_univalent_2}.
For the remainder, fix bicategories $\B$ and $\C$ such that $\C$ is univalent.
Recall that a pseudofunctor consists of an action on 0-cells, 1-cells, 2-cells, a family of 2-cells witnessing the preservation of composition and identity 1-cells, such that a number of laws are satisfied.

To construct the bicategory $\pseudo(\B,\C)$ of pseudofunctors, we start with a base bicategory whose objects are functions from $\B_0$ to $\C_0$.
Then we add structure to the base bicategory in several layers.
Each layer is given as a displayed bicategory over the total bicategory of the preceding one.
The first layer consists of actions of the pseudofunctors on 1-cells.
On its total bicategory, we define three displayed bicategories: one for the compositor, one for the identitor, and one for the action on 2-cells.
We take the total bicategory of the product of these three displayed bicategories.
Finally, we take the full subbicategory of that total bicategory on those objects that satisfy the axioms of a pseudofunctor.
To show its univalence, we show the base and each layer are univalent.

Now let us look at the formal definitions.
\begin{defi}[\coqident{Bicategories.PseudoFunctors.Display.Base}{ps_base}]
The bicategory $\Base(\B,\C)$ is defined as follows.
\begin{itemize}
	\item The objects are functions $\B_0 \function \C_0$;
	\item The 1-cells from $F_0$ to $G_0$ are families of 1-cells $\eta_0, \beta_0: \Prod (x : \B_0). F_0(x) \onecell G_0(x)$;
	\item The 2-cells from $\eta_0$ to $\beta_0$ are families of 2-cells $\modvar : \Prod (x : \B_0). \eta_0(x) \mytwocell \beta_0(x)$.
\end{itemize}
The operations are defined pointwise.
\end{defi}

Next we define a displayed bicategory over $\Base(\B,\C)$.
The displayed 0-cells are actions of pseudofunctors on 1-cells.
The displayed 1-cells over $\eta_0$ are 2-cells witnessing the naturality of $\eta_0$.
The displayed 2-cells over $\modvar$ are equalities which show that $\modvar$ is a modification.

\begin{defi}[\coqident{Bicategories.PseudoFunctors.Display.Map1Cells}{map1cells_disp_bicat}]
We define a displayed bicategory $\mapod(\B,\C)$ over $\Base(\B,\C)$ such that
\begin{itemize}
	\item the displayed objects over $F_0 : \B_0 \function \C_0$ are families of functions 
	\[F_1: \Prod (X, Y : \B_0). \B_1(X,Y) \function \C_1(F_0(X),F_0(Y));\]
	\item the displayed 1-cells over $\eta_0 : F_0(x) \onecell G_0(x)$ from $F_1$ to $G_1$ are families of invertible 2-cells
	\[
	\eta_1 : \Prod (X, Y : \B_0) (f : X \onecell Y). \eta_0(X) \cdot G_1(f) \mytwocell F_1(f) \cdot \eta_0(Y);
	\]
	\item the displayed 2-cells over $\modvar : \eta_0(x) \mytwocell \beta_0(x)$ from $\eta_1$ to $\beta_1$ are families of equalities
	\[
	\Prod (X, Y : \B_0) (f : X \onecell Y). \eta_1(f) \vcomp (F_1(f) \whiskerl \modvar(Y)) = (\modvar(X) \whiskerr G_1(f)) \vcomp \beta_1(f).
	\]
\end{itemize}
\end{defi}

We denote the total bicategory of $\mapod(\B,\C)$ by $\mapo(\B,\C)$.
Now we define three displayed bicategories over $\mapo(\B,\C)$.
Each of them is defined as a locally chaotic displayed bicategory (\Cref{ex:chaotic_disp_bicat} in \Cref{ex:disp_bicat}).

\begin{defi}[\coqident{Bicategories.PseudoFunctors.Display.Identitor}{identitor_disp_cat}]
We define a displayed bicategory $\mapid(\B,\C)$ over $\mapo(\B,\C)$ as follows:
\begin{itemize}
	\item The displayed objects over $(F_0,F_1)$ are identitors
	\[
	\identitor{F} : \Prod (X : \B_0). \id_1(F_0(X)) \mytwocell F_1(\id_1(X));
	\]
	\item The displayed morphisms over $(\eta_0, \eta_1)$ from $\identitor{F}$ to $\identitor{G}$ are equalities
	\[
	\runitor(\eta_0(X)) \vcomp \lunitor(\eta_0(X))^{-1} \vcomp (\identitor{F}(X) \whiskerr \eta_0(X)) = (\eta_0(X) \whiskerl \identitor{G}(X)) \vcomp \eta_1(\id_1(X)).
	\]
\end{itemize}
\end{defi}

\begin{defi}[\coqident{Bicategories.PseudoFunctors.Display.Compositor}{compositor_disp_cat}]
We define a displayed bicategory $\mapcd(\B,\C)$ over $\mapo(\B,\C)$ as follows:
\begin{itemize}
	\item The displayed objects over $(F_0,F_1)$ are compositors
	\[
	\compositor{F} : \Prod (X, Y, Z : \B_0) (f : \B_1(X,Y)) (g : \B_1(Y,Z)). F_1(f) \cdot F_1(g) \mytwocell F_1(f \cdot g);
	\]
	\item The displayed morphisms over $(\eta_0, \eta_1)$ from $\compositor{F}$ to $\compositor{G}$ consists of equalities
	\[\assoc \vcomp (\eta_1(f) \whiskerr G_1(g)) \vcomp \assoc^{-1} \vcomp (F_1(f) \whiskerl \eta_1(g)) \vcomp \assoc \vcomp (\compositor{F} \whiskerr \eta_0(Z))
	= (\eta_0(X) \whiskerl \compositor{G}) \vcomp \eta_1(f \cdot g)
	\]
	for all $X, Y, Z : \B_0$, $f : \B_1(X,Y)$ and $g : \B_1(Y,Z)$.
\end{itemize}
\end{defi}

\begin{defi}[\coqident{Bicategories.PseudoFunctors.Display.Map2Cells}{map2cells_disp_cat}]
We define a displayed bicategory $\maptd(\B,\C)$ over $\mapo(\B,\C)$ as follows:
\begin{itemize}
	\item The displayed objects over $(F_0, F_1)$ are
	\[
	F_2 : \Prod (a, b : \B_0) (f, g : a \onecell b). (f \mytwocell g) \function (F_1(f) \mytwocell F_1(g));
	\]
	\item The displayed morphisms over $(\eta_0, \eta_1)$ from $F_2$ to $G_2$ consist of equalities
	\[
	\Prod (\tc : f \mytwocell g).
	(\eta_0(X) \whiskerl G_2(\tc)) \vcomp \eta_1(g)
	=
	\eta_1(f) \vcomp (F_2(\tc) \whiskerr \eta_0(Y)).
	\]
\end{itemize}
\end{defi}

We denote the total category of the product of $\maptd(\B,\C)$, $\mapid(\B,\C)$, and $\mapcd(\B,\C)$ by $\rpseudo(\B,\C)$.
Note that its objects are of the form $((F_0, F_1), (F_2, \identitor{F}, \compositor{F}))$, its 1-cells are pseudotransformations, and its 2-cells are modifications.
However, its objects are not yet pseudofunctors, because those also need to satisfy the laws in \Cref{def:psfun}.

\begin{defi}[\coqident{Bicategories.PseudoFunctors.Display.PseudoFunctorBicat}{psfunctor_bicat}]
\label{def:psfunctor}
We define the bicategory $\pseudo(\B,\C)$ as the full subbicategory of $\rpseudo(\B,\C)$ where the objects satisfy the following laws
\begin{itemize}
	\item $F_2(\id_2(f)) = \id_2(F_1(f))$ and $F_2(f \vcomp g) = F_2(f) \vcomp F_2(g)$;
	\item $\lunitor(F_1(f)) = (\identitor{F}(a) \whiskerr F_1(f)) \vcomp \compositor{F}(\id_1(a), f) \vcomp F_2(\lunitor(f))$;
	\item $\runitor(F_1(f)) = (F_1(f) \whiskerl \identitor{F}(b)) \vcomp \compositor{F}(f, \id_1(b)) \vcomp F_2(\runitor(f))$;
	\item 
	$(F_1(f) \vcomp \compositor{F}(g,h)) \vcomp \compositor{F}(f, g \cdot h) \vcomp F_2(\assoc) = \assoc \vcomp (\compositor{F}(f, g) \whiskerr F_1(h)) \vcomp \compositor{F}(f \cdot g, h)$;
	\item $\compositor{F}(f, g_1) \vcomp F_2(f \whiskerl \tc) = (F_1(f) \whiskerl F_2(\tc)) \vcomp \compositor{F}(f, g_2)$;
	\item $\compositor{F}(f_1,g) \vcomp F_2(\tc \whiskerr g) = (F_2(\tc) \whiskerr F_1(g)) \vcomp \compositor{F}(f_2,g)$;
	\item $\identitor{F}(X)$ and $\compositor{F}(f,g)$ are invertible 2-cells.
\end{itemize}
\end{defi}

Note that the objects, 1-cells, and 2-cells of the resulting bicategory correspond to pseudofunctors (\Cref{def:psfun}), pseudotransformations (\Cref{def:pstrans}), and modifications (\Cref{def:modif}) respectively.
Each displayed layer in this construction is univalent.
In addition, if $\C$ is univalent, then so is $\Base(\B,\C)$.
All in all, the results of this subsection can be summarized as follows.
\begin{defi}
Given bicategories $\B$ and $\C$, we define a bicategory $\pseudo(\B,\C)$ whose objects are pseudofunctors, 1-cells are pseudotransformations, and 2-cells are modifications.
\end{defi}
\begin{theorem}
If $\C$ is univalent, then so is $\pseudo(\B,\C)$.
\end{theorem}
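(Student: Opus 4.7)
The plan is to prove univalence of $\pseudo(\B,\C)$ by unwinding the layered construction sketched in \Cref{sec:pseudo}: I would iteratively apply \Cref{th:univalence_total} to pass from univalent base plus univalent displayed to univalent total, and finally close under the full-subbicategory construction via \Cref{ex:univalence}, item \ref{ex:fullsub:univalent}. Concretely, I need to check that (i) $\Base(\B,\C)$ is univalent, (ii) $\mapod(\B,\C)$ is displayed univalent over it, (iii) each of $\maptd$, $\mapid$, $\mapcd$ is displayed univalent over $\mapo(\B,\C)$, and (iv) the full subbicategory of $\rpseudo(\B,\C)$ cut out by the pseudofunctor axioms is carved out by a proposition.

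I would start with $\Base(\B,\C)$: since its $0$-, $1$-, and $2$-cells are given pointwise in $\C$, the maps $\idtoiso^{2,1}$ and $\idtoiso^{2,0}$ factor through dependent products of the corresponding maps in $\C$. Using function extensionality together with local and global univalence of $\C$ then makes them equivalences. For $\mapod(\B,\C)$ I would use the fiberwise criterion for displayed univalence and reason by path induction on the base cell. Displayed $2$-cells in $\mapod$ are propositions (being equations of $2$-cells in $\C$), so local displayed univalence over $\id_2$ amounts to local univalence of the hom-categories of $\C$ applied pointwise; global displayed univalence over the identity $1$-cell identifies $F_1$ and $G_1$ via a pointwise family of invertible $2$-cells, which is exactly the data of a displayed adjoint equivalence when $\eta_0 = \id_1$, after the unitors trivialize. \Cref{th:univalence_total} then yields univalence of $\mapo(\B,\C)$.

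For the three layers $\maptd$, $\mapid$, $\mapcd$, each is constructed as a chaotic displayed bicategory in the sense of \Cref{ex:disp_bicat}, item \ref{ex:chaotic_disp_bicat}, since the types of displayed $2$-cells are the unit type. Hence I would apply the criterion at the end of \Cref{sec:disp_univalence}: it suffices to exhibit an inverse to $\dispidtoiso^{2,0}$, plus to check that the underlying data of $0$-cells is a set and of $1$-cells is a family of propositions. In each case, the data involve $2$-cells of $\C$ (which form sets) and the $1$-cells are propositional naturality-style equalities, and the inverse is constructed by unfolding an identity on functions and invoking local univalence of $\C$. \Cref{ex:univalence}, item 3, then gives displayed univalence of the product $\maptd\times\mapid\times\mapcd$, and a second application of \Cref{th:univalence_total} makes $\rpseudo(\B,\C)$ univalent. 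Finally, $\pseudo(\B,\C)$ is the full subbicategory of $\rpseudo(\B,\C)$ selected by the seven conditions of \Cref{def:psfunctor}; each is either an equality between $2$-cells of $\C$ (propositional because $\C$ is a bicategory) or the invertibility of a given $2$-cell (propositional in general), so the defining predicate is a proposition and \Cref{ex:univalence}, item \ref{ex:fullsub:univalent}, delivers the conclusion.

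The main obstacle will be item (ii), displayed univalence of $\mapod(\B,\C)$. This is the only layer in which the displayed $0$-cells carry genuine, non-propositional structure (namely the action of a pseudofunctor on $1$-cells), and the fiberwise step has to produce a displayed adjoint equivalence from an equality of such actions. The delicate point is that one must match up, pointwise in each pair $X, Y : \B_0$, the displayed adjoint-equivalence data with a natural isomorphism of functors $\B_1(X,Y) \to \C_1(F_0 X, F_0 Y)$, and then invoke local univalence of $\C$ at each $\B_1(X,Y)$ to convert this into an equality of actions. Once this layer is handled, the remaining layers contribute only propositional content and the construction goes through mechanically.
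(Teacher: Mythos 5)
Your proposal follows essentially the same route as the paper: the paper establishes the theorem by showing that $\Base(\B,\C)$ is univalent when $\C$ is, that each displayed layer ($\mapod$, then the chaotic layers $\maptd$, $\mapid$, $\mapcd$ and their product, then the full subbicategory over the propositional pseudofunctor axioms) is displayed univalent, and by iterating \Cref{th:univalence_total}. Your write-up supplies more detail than the paper does at each layer (in particular the fiberwise analysis of $\mapod$ and the use of the chaotic-bicategory criterion), and those details are consistent with the formalized argument.
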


\subsection{Algebraic Examples}\label{sec:algebraic-examples} 
Next, we show how to use displayed bicategories to construct univalent bicategories of algebras for some signature.
We consider signatures that specify operations, equations, and coherencies on those equations.
More specifically, a signature consists of a pseudofunctor $F$ (specifying the operations),
a finite set of pairs of pseudotransformations $l_i$ and $r_i$ (specifying the equations),
and a proposition $P$ (specifying the coherencies) which can refer to $F$ and the $l_i$ and $r_i$.
An algebra on such a signature consists of an object $X$,
a 1-cell $h : F(X) \onecell X$,
2-cells $l_i(X) \mytwocell r_i(X)$,
such that the predicate $P$ is satisfied by all this data.

To define the bicategory of algebras on a signature, we define three displayed bicategories which add the operations, equations, and coherencies.
Since the equations can make use of the operations and the coherencies can refer to the equations,
the displayed bicategories must be layered suitably.
More specifically, starting with a bicategory $\B$ and a pseudofunctor $F : \pseudoF{\B}{\B}$, we first define a displayed bicategory whose displayed objects are algebras on $F$.
On top of its total bicategory, we give a displayed bicategory which adds 2-cells (modeling equations) to the structure.
This gives rise to another total bicategory. Finally, we consider the full subbicategory of the latter total bicategory consisting of all objects satisfying the desired coherencies.
The objects of the resulting total bicategory are models for the signature we started with.

To illustrate our approach, we show how to define the bicategory of monads internal to a bicategory, as discussed in \Cref{def:monads}.
A monad internal to a bicategory $\B$ consists of, among others, a 0-cell $X : \B$ and 1-cell $X \onecell X$ as an ``operation''.
Such structure is encapsulated by \emph{algebras for a pseudofunctor} and pseudomorphisms between those algebras.

\begin{defi}[\coqident{Bicategories.DisplayedBicats.Examples.Algebras}{disp_alg_bicat}]
\label{def:alg_bicat}
Let $\B$ be a bicategory and let $F : \pseudoF{\B}{\B}$ be a pseudofunctor.
We define a displayed bicategory $\algd(F)$.
\begin{itemize}
	\item The objects over $a: \B$ are 1-cells $F_0(a) \onecell a$.
	\item The 1-cells over $f : \B_1(a,b)$ from $h_a : F_0(a) \onecell a$ to $h_b : F_0(b) \onecell b$ are invertible 2-cells $h_a \cdot f \mytwocell F_1(f) \cdot h_b$.
	\item Given $f, g : \B_1(a,b)$, algebras $h_a : F_0(a) \onecell a$ and $h_b : F_0(b) \onecell b$, and $h_f$ and $h_g$ over $f$ and $g$ respectively, a 2-cell over $\tc : f \mytwocell g$ is a commuting square
	\[
	\xymatrix
	{
		h_a \cdot f \twoar[r]^-{h_f} \twoar[d]_{h_a \whiskerl \theta} & F_1(f) \cdot h_b \twoar[d]^{F_2(\theta) \whiskerr h_b} \\
		h_a \cdot g \twoar[r]_-{h_g} & F_1(g) \cdot h_b
	}
	\]
\end{itemize}
We write $\alg(F)$ for the total category of $\algd(F)$.
\end{defi}

\begin{theorem}[\coqident{Bicategories.DisplayedBicats.Examples.Algebras}{bicat_algebra_is_univalent_2}]
  \label{th:alg_univalence}
  Let $\B$ be a bicategory and let $F : \pseudoF{\B}{\B}$ be a pseudofunctor.
  If $\B$ is univalent, then so is $\alg(F)$.
\end{theorem}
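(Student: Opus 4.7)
The plan is to invoke \Cref{th:univalence_total}, which reduces the problem to showing that $\algd(F)$ is both locally and globally univalent as a displayed bicategory, since $\B$ is already univalent by hypothesis. By the fiberwise criterion stated immediately after \Cref{th:univalence_total}, it suffices to produce, for each identity path in the base, equivalences
\[ (h_f = h_g) \simeq \diso{h_f}{\id_2(f)}{h_g} \qquad \text{and} \qquad (h_a = h_a') \simeq \dadjequiv{h_a}{\id_1(a)}{h_a'}. \]
A preliminary observation useful for both cases is that $\algd(F)$ is locally propositional---a displayed 2-cell is an equation between 2-cells of $\B$, and hom-types in $\B$ are sets---and locally groupoidal, because the commuting square witnessing invertibility over $\tc^{-1}$ is forced by the square over $\tc$ together with the invertibility of the relevant 1-cells in $\B$. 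Consequently, displayed invertible 2-cells coincide with displayed 2-cells on the nose.

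For displayed local univalence, I would expand the defining square of a displayed 2-cell over $\id_2(f)$. Using the pseudofunctor law $F_2(\id_2(f)) = \id_2(F_1(f))$ and the identity laws for left and right whiskering, the square collapses to the bare equation $h_f = h_g$ between the underlying 2-cells in $\B$. Since displayed 1-cells $h_f$ are themselves invertible 2-cells and invertibility is propositional, paths between displayed 1-cells coincide with equalities of their underlying 2-cells. Both sides of the comparison map thus reduce to the same proposition, so the canonical map $\dispidtoiso^{2,1}$ is an equivalence.

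For displayed global univalence, I would factor $\dispidtoiso^{2,0}_{\idpath(a),h_a,h_a'}$ as a chain of equivalences. Using the unitors $\lunitor(h_a')$, $\runitor(h_a)$ and the invertible identitor $\identitor{F}(a)$, a displayed 1-cell $h_a \cdot \id_1(a) \twocell F_1(\id_1(a)) \cdot h_a'$ over $\id_1(a)$ is in bijection with an invertible 2-cell $h_a \twocell h_a'$ in $\B$. Local univalence of $\B$ then converts such invertible 2-cells into paths $h_a = h_a'$. It remains to argue that a displayed adjoint equivalence structure on such a displayed 1-cell is uniquely determined: the right adjoint is forced to be the reverse invertible 2-cell, while the unit, counit, and triangle laws live in propositional types by local propositionality of $\algd(F)$, so they are automatically satisfied. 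The main obstacle is bureaucratic rather than conceptual: I must verify that this composite equivalence agrees definitionally (or at least propositionally) with $\dispidtoiso^{2,0}$, which amounts to careful bookkeeping of transport across the unitor and identitor coherences, and checking that the two diagrams of \Cref{def:adjequiv} collapse in the propositional setting.
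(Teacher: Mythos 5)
Your proposal follows exactly the route the paper intends for this theorem (which it states without a written proof, deferring to the formalization): reduce to displayed univalence of $\algd(F)$ via \Cref{th:univalence_total} and the fiberwise criterion, exploiting that $\algd(F)$ is locally propositional and locally groupoidal, and characterizing displayed 1-cells over $\id_1(a)$ as invertible 2-cells $h_a \twocell h_a'$ via the unitors and the identitor. One small caution: in the global case the unit and counit of the displayed adjoint equivalence are themselves displayed 2-cells, i.e.\ equations between 2-cells of $\B$, and local propositionality only gives their \emph{uniqueness}, not their \emph{inhabitation} --- you still must check that those two squares actually commute (a routine coherence computation), whereas the triangle laws genuinely are automatic.
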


\begin{example}[\Cref{ex:p1types_disp} cont'd]
  The bicategory of pointed 1-types is the bicategory of algebras for the constant pseudofunctor $F(a) = 1$.
\end{example}

Returning to the example of monads, define $\PM$ to be $\alg(\id(\B))$.
Objects of $\PM$ consist of an $X : \B_0$ and a 1-cell $X \onecell X$.
To refine this further, we need to add 2-cells corresponding to the unit and the multiplication.
We do this by defining two displayed bicategories over $\PM$.

In general, the construction for building algebras with 2-cells (which model ``equations'') looks as follows.
Suppose that we have a displayed bicategory $\D$ over some $\B$.
Our goal is to define a displayed bicategory over $\total{D}$ where the displayed 0-cells are certain 2-cells in $\B$.
The endpoints for these 2-cells are choices of 1-cells that are natural in objects, thus they are given by pseudotransformations $l, r$.
The source of the endpoints is $\dproj_D \cdot S$ for some $S : \pseudoF{\B}{\B}$, and the target is $\dproj_D \cdot \id(\B)$ where $\dproj_D$ is the projection from $\total{D}$ to $\B$.
The source pseudofunctor $S : \pseudoF{\B}{\B}$ determines the shape of the free variables that occur in the endpoints.
Note that the target of the endpoint is $\dproj_D \cdot \id(\B)$, instead of $\dproj_D$, which is symmetric to the source $\dproj_D \cdot S$.
This allows us to construct such transformations by composing them.

Thus, pseudotransformations  $l, r: \dproj_D \cdot S \onecell \dproj_D \cdot \id(\B)$ give 1-cells $l(a, h_a), r(a, h_a) : \B_1(S(a), a)$ for each $(a, h_a) : \total{D}$.
By allowing $l$ and $r$ to depend not only on the 0-cell $a : \B$, but also on the displayed cell $h_a : \D(a)$, the endpoints can refer to the operations that were added as part of algebras in \Cref{def:alg_bicat}.
Formally, the construction that adds 2-cells from $l(a)$ to $r(a)$ is defined as the following displayed bicategory.
\begin{defi}[\coqident{Bicategories.DisplayedBicats.Examples.Add2Cell}{add_cell_disp_cat}]
\label{def:add2cell}
Suppose that $\D$ is a displayed bicategory over $\B$.
Let $S : \pseudoF{\B}{\B}$ be a pseudofunctor and let $l, r :  \dproj_D \cdot S \onecell \dproj_D \cdot \id(\B)$ be pseudotransformations.
We define a displayed bicategory $\addcell(\D,l,r)$ over $\total{D}$ as a locally chaotic displayed bicategory (\cf \Cref{ex:chaotic_disp_bicat} in \Cref{ex:disp_bicat}).
\begin{itemize}
	\item The objects over $(a, h_a)$ are 2-cells $l(a, h_a) \mytwocell r(a, h_a)$.
	\item The morphisms over $(f, \ff) : \total{D}((a, h_a),(b, h_b))$ from $\alpha : l(a, h_a) \onecell r(a, h_a)$ to $\beta : l(b, h_b) \onecell r(b, h_b)$ are the following commuting squares in $\B$:
	\[
	\xymatrix
	{
		l(a, h_a) \cdot f \twoar[rrr]^-{\alpha \whiskerr f} \twoar[d]_{l(f, \ff)} & & & r(a, h_a) \cdot f \twoar[d]^{r(f, \ff)} \\
		S(f) \cdot l(b, h_b) \twoar[rrr]_-{S(f) \whiskerl \beta} & & & S(f) \cdot r(b, h_b)
	}
	\]
\end{itemize}
\end{defi}

\begin{theorem}
\label{th:add2cell_univalence}
The displayed bicategory $\addcell(\D,l,r)$ is locally univalent (\coqident{Bicategories.DisplayedBicats.Examples.Add2Cell}{add_cell_disp_cat_univalent_2_1}).
Moreover, if $\C$ is locally univalent and $\D$ is locally univalent, then $\addcell(\D,l,r)$ is globally univalent (\coqident{Bicategories.DisplayedBicats.Examples.Add2Cell}{add_cell_disp_cat_univalent_2_0}).
\end{theorem}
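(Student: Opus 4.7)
The plan is to reduce to the fiberwise univalence criteria spelled out after Theorem~\ref{th:univalence_total}, so that it suffices to show $\dispidtoiso^{2,1}_{\idpath(f),\ff,\fff}$ is an equivalence for the first part, and (under the hypotheses of the second part) $\dispidtoiso^{2,0}_{\idpath(a,h_a),\aa,\bb}$ is an equivalence. Two general features of $\addcell(\D,l,r)$ do most of the work: it is chaotic (cf.\ \Cref{ex:chaotic_disp_bicat}), so every type of displayed 2-cells is the unit type and hence every type of displayed invertible 2-cells is contractible; and its displayed 1-cells are proofs that a square of 2-cells in $\B$ commutes, hence propositions, since the 2-cells of $\B$ form a set.

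For local univalence, the fiberwise target $\diso{\ff}{\id_2(f)}{\fff}$ is contractible by the chaotic structure, while the source $\ff = \fff$ is the identity type of a proposition, hence also contractible. Any map between contractible types is an equivalence, so the first claim follows immediately without any further work.

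For global univalence, we assume $\B$ and $\D$ are locally univalent (I read the ``$\C$'' in the statement as a typo for $\B$). I would then show that for every $(a,h_a) : \total{\D}$ and displayed objects $\aa,\bb : l(a,h_a)\twocell r(a,h_a)$, the map $\aa = \bb \to \dadjequiv{\aa}{\id_1(a,h_a)}{\bb}$ is an equivalence. Both sides are propositions: the source because 2-cells of $\B$ form a set, and the target because its data consists of two propositional displayed 1-cells (the equivalence and its right adjoint over $\id_1$) together with unit, counit, and triangle witnesses which, being displayed 2-cells in a chaotic displayed bicategory, are trivially contractible with all coherence laws automatic. It therefore suffices to exhibit maps in both directions; the forward direction is handled by path induction, yielding the identity displayed adjoint equivalence.

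The main obstacle is the backward direction: given a displayed 1-cell $\ff : \dmor{\aa}{\bb}{\id_1(a,h_a)}$, deduce $\aa = \bb$. The square encoded by $\ff$ has horizontal edges $\aa \whiskerr \id_1(a)$ and $S(\id_1(a)) \whiskerl \bb$, and vertical edges the naturality components $l(\id_1(a,h_a))$ and $r(\id_1(a,h_a))$. Using the pseudotransformation identity axiom applied to the identity 1-cell of $\total{\D}$, the vertical components factor through the identitor of $S$ and the unitors, so that after cancelling invertible cells the square becomes an equation between $\aa$ and $\bb$ (suitably whiskered by identity 1-cells), which reduces to $\aa = \bb$ by the unitor coherences in $\B$. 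This is pure coherence bookkeeping, and whenever it gets unwieldy one can invoke \Cref{prop:J_local} to simplify the invertible 2-cells involved, using local univalence of $\B$.
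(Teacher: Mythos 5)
Your proof is correct and follows what is evidently the intended route: the paper states \Cref{th:add2cell_univalence} without an explicit proof, but the machinery it sets up in \Cref{sec:disp_univalence} --- the fiberwise criteria after \Cref{th:univalence_total} and the proposition on univalence of chaotic displayed bicategories --- is precisely what you reassemble by hand. Your reading of ``$\C$'' as a typo for $\B$ is right, your local-univalence argument (both sides contractible) is fine, and the real content of the global part is exactly where you locate it: the backward map extracting $\aa = \bb$ from a displayed 1-cell over $\id_1(a,h_a)$, using the unit law of the pseudotransformations $l$ and $r$ to rewrite $l_1(\id_1)$ and $r_1(\id_1)$ in terms of unitors and the (invertible) identitors of $\dproj_D \cdot S$ and $\dproj_D \cdot \id(\B)$, then cancelling via naturality of whiskering and the unitors. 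That computation uses only the bicategory and pseudotransformation axioms, so your parenthetical appeal to \Cref{prop:J_local} is genuinely optional. One point worth flagging: your direct propositionality argument for $\dadjequiv{\aa}{\id_1(a,h_a)}{\bb}$ (propositional displayed 1-cells plus contractible displayed 2-cells) never visibly uses the hypotheses that $\B$ and $\D$ are locally univalent. This is not a gap --- in the chaotic, locally propositional situation the type is a proposition for elementary reasons --- but it is worth being aware that the stated hypotheses are there because the formalization routes the global part through the general lemma for chaotic displayed bicategories, which asks for local univalence of the base $\total{\D}$ (obtained from local univalence of $\B$ and $\D$ via \Cref{th:univalence_total}); your inlined argument simply avoids that detour.
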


Returning to the example of monads, let us use \Cref{def:add2cell} to add the unit and the multiplication 2-cells to the structure of $\PM$.
We can add the unit and the multiplication separately, as two displayed bicategories.
For the unit, we pick the source pseudofunctor $S(a) = a$ and the endpoints are defined as $l(a, f : a \onecell a) = \id_0(a)$ and $r(a, f : a \onecell a) = f$.
For the multiplication, we use the same source pseudofunctor and the same right endpoint, but we pick the left endpoint to be $l(a, f : a \onecell a) = f \cdot f$.

Let $\LM'$ be the product of these two displayed bicategories, displayed over $\total{\PM}$.
We use the sigma construction (\cf \Cref{ex:disp_sigma} in \Cref{ex:disp_bicat}) to obtain a displayed bicategory $\LM$ over $\B$.
It is almost the bicategory of monads internal to $\B$.
To finalize the construction, we need to require the structures in $\LM$ to satisfy the monadic laws: for each object $(f, \eta, \mu)$ in $\total{\LM}$ the diagrams from \Cref{def:monads} need to commute.
We construct the final bicategory $\M(\B)$ (as in \Cref{def:monads}) as the full subbicategory of $\total{\LM}$ with respect to these laws.
Again to guarantee that $\M(\B)$ is displayed over $\B$, we use the sigma construction.
From \Cref{prop:disp_univ_sigma,th:alg_univalence,th:add2cell_univalence,ex:univalence} we conclude:
\begin{theorem}[\coqident{Bicategories.DisplayedBicats.Examples.Monads}{bigmonad_is_univalent_2}]
If $\B$ is univalent, then so is $\M(\B)$.
\end{theorem}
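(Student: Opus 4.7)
The plan is to show univalence of $\M(\B)$ by assembling univalence layer by layer along the construction described just before the theorem, using \Cref{th:univalence_total} (univalence of total bicategories) at each layering step. Recall that $\M(\B)$ is built from $\B$ in four stages: first the algebra layer $\PM = \alg(\id(\B))$; second, two displayed layers using $\addcell$ (one for the unit $\eta$, one for the multiplication $\mu$), combined by the product construction into $\LM'$ over $\total{\PM}$, and then transported along a sigma to obtain $\LM$ over $\B$; third, the full subbicategory carving out those $(f,\eta,\mu)$ satisfying the monad axioms, which is again packaged via a sigma into a displayed bicategory over $\B$ whose total bicategory is $\M(\B)$.

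First I would handle the algebra layer: since $\B$ is univalent and $\id(\B) : \pseudoF{\B}{\B}$ is a pseudofunctor, \Cref{th:alg_univalence} gives univalence of $\alg(\id(\B)) = \total{\PM}$, and in fact already tells us that $\PM$ itself is displayed univalent by \Cref{th:univalence_total}. Next I would apply \Cref{th:add2cell_univalence} to each of the two $\addcell$ layers used for $\eta$ and $\mu$: local univalence is automatic, and global univalence uses that both $\B$ and $\PM$ are locally univalent (which we have). I would then combine the two $\addcell$ layers using \Cref{ex:univalence}, \itemref{ex:disp_dirprod} to get the univalent product displayed bicategory $\LM'$ over $\total{\PM}$.

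The sigma step that produces $\LM$ over $\B$ from $\PM$ over $\B$ and $\LM'$ over $\total{\PM}$ is then handled by \Cref{prop:disp_univ_sigma}: its first clause directly gives univalence of $\total{\LM}$ from univalence of $\total{\PM}$ and of $\total{\LM'}$. Finally, the monad axioms pick out a full subbicategory of $\total{\LM}$, which is univalent by \Cref{ex:univalence}, \itemref{ex:fullsub:univalent}. Packaging this subbicategory as a displayed bicategory over $\B$ via another sigma and appealing once more to \Cref{prop:disp_univ_sigma} yields univalence of the total bicategory $\M(\B)$.

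The main obstacle I expect is not any single calculation but the bookkeeping required to apply \Cref{prop:disp_univ_sigma} at the two sigma steps: one has to check the hypotheses of local propositionality (\Cref{def:locally_prop}) and local groupoidality (\Cref{def:locally_grouoidal}) where needed, and in particular verify these for the displayed layers produced by $\addcell$ and for the proposition-valued coherence layer enforcing the monad laws. Concretely, the $\addcell$-layers have displayed 2-cells given by commuting squares, which are propositional and hence trivially groupoidal, so the hypothesis is discharged once one writes out the definitions; similarly the monad-axiom layer has contractible (unit-type) displayed 2-cells. With these routine checks in place, the chain of implications above delivers the theorem.
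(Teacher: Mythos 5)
Your proposal is correct and matches the paper's argument, which likewise assembles univalence of $\M(\B)$ from \Cref{th:alg_univalence}, \Cref{th:add2cell_univalence}, the product and full-subbicategory items of \Cref{ex:univalence}, and \Cref{prop:disp_univ_sigma} applied to the same layering (algebras for $\id(\B)$, then the two $\addcell$ layers for unit and multiplication, then the sigma and full-subbicategory steps). The only small slip is your claim that \Cref{th:univalence_total} yields \emph{displayed} univalence of $\PM$ from univalence of $\total{\PM}$ --- that theorem goes in the opposite direction --- but displayed univalence of the algebra layer is exactly what is established en route to \Cref{th:alg_univalence}, so nothing essential is affected.
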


\subsection{Categories with Families}
\label{sec:cwfs}
Finally, we discuss the last example: the bicategory of (univalent) categories with families (CwFs) \cite{DBLP:conf/types/Dybjer95}.
We follow the formulation by Fiore (described as ``dependent context structures'' in \cite{fiore_cwf}) and Awodey \cite[Section 1]{awodey2018natural}, which is already formalized in \UniMath \cite{voevodsky2018categorical}:
a CwF consists of a category $C$, two presheaves $\Ty$ and $\Tm$ on $C$, a morphism $p : \Tm \to \Ty$,
and a representation structure for $p$.

However, rather than defining CwFs in one step, we use a stratified construction yielding
the sought bicategory as the total bicategory of iterated displayed layers.
The base bicategory is \Cat (cf.\ \Cref{ex:cat}).
The second layer of data consists of two presheaves, each described by the following construction.

\begin{defi}[\coqident{Bicategories.DisplayedBicats.Examples.ContravariantFunctor}{disp_presheaf_bicat}]
\label{def:disp_presheaf_bicat}
Define the displayed bicategory $\PShD$ over $\Cat$:
\begin{itemize}
	\item The objects over $C$ are functors from $\op{C}$ to the univalent category $\Set$;
	\item The 1-cells from $T : \op{C} \onecell \Set$ to $T' : \op{D} \onecell \Set$ over $F : C \onecell D$ are natural transformations from $T$ to $\op{F} \cdot T'$;
	\item The 2-cells from $\beta : T \mytwocell \op{F} \cdot T'$ to $\beta' : T \mytwocell \op{G} \cdot T'$ over $\gamma : F \mytwocell G$ are equalities
	\[
	\beta = \beta' \vcomp (\op{\gamma} \whiskerr T').
	\]
\end{itemize}
\end{defi}
Denote by $\TT$ the total category of the product of $\PShD$ with itself.
An object in $\TT$ consists of a category $C$ and two presheaves $\Ty, \Tm : \op{C} \onecell \Set$.

The next piece of data in a CwF is a natural transformation from $\Tm$ to $\Ty$:

\begin{defi}[\coqident{Bicategories.DisplayedBicats.Examples.Cofunctormap}{morphisms_of_presheaves_display}]
\label{def:cofunctormap}
We define a displayed bicategory $\CwFd$ on $\TT$ as the locally chaotic displayed bicategory (\Cref{ex:chaotic_disp_bicat} in \Cref{ex:disp_bicat}) such that 
\begin{itemize}
	\item The objects over $(C, (\Ty, \Tm))$ are natural transformations from $\Tm$ to $\Ty$.
	\item Suppose we have two objects $(C, (\Ty, \Tm))$ and $(C', (\Ty', \Tm'))$, two natural transformations $\p : \Tm \mytwocell \Ty$ and $\p' : \Tm' \mytwocell \Ty'$, and suppose we have a 1-cell $f$ from $(C, (\Ty, \Tm))$ to $(C', (\Ty', \Tm'))$. Note that $f$ consists of a functor $F : C \rightarrow C'$ and two transformations $\beta : \Ty \mytwocell \op{F} \circ \Ty'$ and $\beta' : \Tm \mytwocell \op{F} \circ \Tm'$.
	Then a 1-cell over $f$ is an equality
	\[
	p \vcomp \beta = \beta' \vcomp (\op{F} \whiskerl p').
	\]
\end{itemize}
\end{defi}
With $\CwFd$ and the sigma construction from \Cref{ex:disp_sigma} in \Cref{ex:disp_bicat}, we get a displayed bicategory over $\Cat$ and we denote its total bicategory by $\CwFh$.
As the last piece of data, we add the representation structure for the morphism $\p$ of presheaves.

\begin{defi}[\coqident{Bicategories.DisplayedBicats.Examples.CwF}{cwf_representation}]
Given a category $C$ together with functors $\Ty, \Tm : \op{C} \onecell \Set$ and a natural transformation $p : \Tm \mytwocell \Ty$, we say $\isCwF(C,\Ty,\Tm,p)$ if for each $\Gamma : C$ and $A : \Ty(\Gamma)$, we have a representation of the fiber of $\p$ over $A$.
\end{defi}

A detailed definition can be found in \cite[Definition 3.1]{voevodsky2018categorical}.
Since $C$ is univalent, the type $\isCwF(C,\Ty,\Tm,\p)$ is a proposition, and thus we define $\CwF$ as a full subbicategory of $\CwFh$.

\begin{proposition}[{\cite[Lemma~4.3]{voevodsky2018categorical} }, \coqident{Bicategories.DisplayedBicats.Examples.CwF}{isaprop_cwf_representation}]
$\isCwF(C,\Ty,\Tm,\p)$ is a proposition.
\end{proposition}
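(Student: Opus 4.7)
The plan is to exploit two facts: first, $\isCwF(C,\Ty,\Tm,\p)$ is a dependent product indexed by $\Gamma : C$ and $A : \Ty(\Gamma)$ of the type of representations of the fiber of $\p$ at $(\Gamma,A)$; second, being a proposition is closed under dependent products. Hence it suffices, for fixed $\Gamma$ and $A$, to show that the type of representation structures at $(\Gamma,A)$ is a proposition.

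Unfolding the definition, such a representation consists of an object $\Gamma.A : C$, a morphism $\pi_A : \Gamma.A \onecell \Gamma$ in $C$, a term $v_A : \Tm(\Gamma.A)$ lying over $A$ in the sense that $\p_{\Gamma.A}(v_A) = A \cdot \pi_A$, together with a universal property stating that a certain canonically induced map of sets is an equivalence. The universal-property component is a proposition on the nose, since being an equivalence of types is a proposition; so I am reduced to showing that the underlying tuple $(\Gamma.A, \pi_A, v_A)$ is determined up to a unique path by the data $(\Ty,\Tm,\p,\Gamma,A)$.

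Given two representations $R_1 = (\Gamma.A_1, \pi_1, v_1, u_1)$ and $R_2 = (\Gamma.A_2, \pi_2, v_2, u_2)$, I apply the universal property of $u_2$ to $(\pi_1, v_1)$ and conversely the universal property of $u_1$ to $(\pi_2, v_2)$, obtaining mutually inverse morphisms between $\Gamma.A_1$ and $\Gamma.A_2$ that commute with the projections and preserve the chosen terms. This yields an isomorphism $\varphi : \Gamma.A_1 \iso \Gamma.A_2$ in $C$. Because $C$ is univalent, $\idtoiso$ is an equivalence, so $\varphi$ lifts to a unique path $p : \Gamma.A_1 = \Gamma.A_2$. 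Transporting $\pi_1$ and $v_1$ along $p$ gives $\pi_2$ and $v_2$, respectively, using functoriality of $C$ on morphisms and functoriality of $\Tm$ on objects; the universal-property component then matches automatically by propositionality. Equality of the representations follows from the standard characterization of equalities in iterated sigma types.

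The main obstacle will be showing that the transports of $\pi_1$ and $v_1$ along the path coming from univalence agree with $\pi_2$ and $v_2$. The key lemma is the naturality of $\idtoiso$: after identifying $\idtoiso(p) = \varphi$, the transport of a morphism $\pi_1 : \Gamma.A_1 \onecell \Gamma$ along $p$ is computed as $\varphi^{-1} \cdot \pi_1$, and by the commutation chosen in the construction of $\varphi$ this equals $\pi_2$; similarly for $v_1$, using that $\Tm$ is a presheaf and that $\varphi$ was built so as to send $v_1$ to $v_2$. With those compatibilities in hand, the remaining bookkeeping (equalities between components living in sets, and between components in propositions) is routine.
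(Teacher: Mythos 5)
Your proposal is correct and follows essentially the same route as the proof the paper relies on: the paper gives no argument of its own but defers to \cite[Lemma~4.3]{voevodsky2018categorical}, whose proof is exactly this reduction to pointwise propositionality, followed by the ``unique up to unique isomorphism'' argument from the universal property and the use of univalence of $C$ (via $\idtoiso$ and the standard transport lemmas for hom-sets and presheaves) to upgrade the isomorphism of representing objects to a path identifying the two representations.
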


\begin{defi}[\coqident{Bicategories.DisplayedBicats.Examples.CwF}{cwf}]
\label{def:cwf}
We define $\CwF$ as the full subbicategory of $\CwFd$ with $\isCwF$.
\end{defi}

\begin{theorem}[\coqident{Bicategories.DisplayedBicats.Examples.CwF}{cwf_is_univalent_2}]
$\CwF$ is univalent.
\end{theorem}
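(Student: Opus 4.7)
The plan is to establish univalence of $\CwF$ by peeling back the stratified construction and applying, at each layer, the general preservation results already available in the paper. By \Cref{def:cwf}, $\CwF$ is a full subbicategory of $\CwFh$ cut out by the predicate $\isCwF$, which is a proposition. Hence, by \Cref{ex:univalence}.\ref{ex:fullsub:univalent} (full subbicategories of univalent bicategories on propositional predicates are univalent), it suffices to show that $\CwFh$ is univalent. In turn, $\CwFh$ is the total bicategory of a sigma displayed bicategory built from $\PShD \times \PShD$ (over $\Cat$) and $\CwFd$ (over $\TT$), so by \Cref{th:univalence_total} and \Cref{prop:disp_univ_sigma} (first item) it suffices to exhibit $\Cat$ as univalent, which is already proved, together with displayed univalence of $\PShD \times \PShD$ over $\Cat$ and displayed univalence of $\CwFd$ over $\TT$.

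Next, I would check each of the two displayed layers. For the product $\PShD \times \PShD$, by \Cref{ex:univalence} it is enough to verify displayed univalence of $\PShD$ itself. Since $\Set$ is a univalent category, the functor category $[\op{C}, \Set]$ is univalent, and so its identities correspond to natural isomorphisms; I would use this, plus the characterization of displayed adjoint equivalences and displayed invertible 2-cells via natural isomorphisms over the base, to check fiberwise univalence of $\PShD$. Concretely, local univalence over a functor $F$ reduces to the statement that an equality of natural transformations $\beta = \beta'$ is equivalent to the displayed 2-cell condition $\beta = \beta' \vcomp (\op{\gamma} \whiskerr T')$ when $\gamma$ is the identity; global univalence in the fiber reduces to univalence of $[\op{C}, \Set]$.

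For $\CwFd$, the definition makes it locally propositional (the displayed 2-cells are equations between natural transformations, see \Cref{def:locally_prop}), and its total 1-cells are proofs of a propositional condition on $\p$, so each fiber is essentially a subtype. I would therefore use the chaotic-construction univalence criterion together with local propositionality and the uniqueness of representation-compatible natural transformations to conclude displayed univalence of $\CwFd$. Assembling the sigma as in \Cref{prop:disp_univ_sigma}, first combining $\PShD \times \PShD$ with $\CwFd$ over $\Cat$, gives a univalent displayed bicategory, whose total bicategory $\CwFh$ is univalent by \Cref{th:univalence_total}.

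The main obstacle I expect is the bookkeeping in the middle step: characterizing displayed adjoint equivalences in $\PShD$ precisely enough to match identities of presheaves via univalence of the functor category, and likewise verifying that displayed adjoint equivalences in $\CwFd$ over an adjoint equivalence in $\TT$ correspond exactly to equality of the morphisms $\p$ of presheaves. Once these two correspondences are in place, the argument becomes a purely formal chain of applications of \Cref{th:univalence_total}, \Cref{prop:disp_univ_sigma}, and \Cref{ex:univalence}.\ref{ex:fullsub:univalent}, yielding univalence of $\CwF$.
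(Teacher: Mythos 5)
Your proposal follows exactly the stratified strategy the paper intends for this theorem: reduce univalence of $\CwF$ to the full-subbicategory result on the propositional predicate $\isCwF$, then to displayed univalence of the layers $\PShD \times \PShD$ and $\CwFd$ via \Cref{th:univalence_total}, \Cref{prop:disp_univ_sigma}, and the chaotic-construction criterion, with the fiberwise checks ultimately resting on univalence of $[\op{C},\Set]$. This matches the paper's (formalized but not spelled-out) proof, so no further comment is needed.
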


\section{Displayed (2-)Inserters}
\label{sec:disp-inserters}
In this section, we study two general constructions which have been suggested by an anonymous referee.
Both the constructions and their name were suggested by the referee.
We already saw instances of them, namely in \Cref{sec:algebraic-examples,sec:cwfs}.

The first one, called the \emph{displayed inserter}, constructs a displayed bicategory whose total bicategory represents the inserter of two pseudofunctors.
A similar construction, namely inserters of 1-cells in bicategories, has already been studied in the literature.
Lambek defined subequalizers of functors \cite{lambek1970subequalizers}, and these are inserters in the bicategory of categories.
These inserters are also known as \emph{dialgebras}, and they have been used to study the semantics of inductive-inductive types \cite{AltenkirchMFS11}.
Power and Robison defined PIE-limits (products, inserters, equaifiers) in 2-categories and showed that they can be used to construct a general class of limits \cite{power1991characterization}.
In addition, it has been shown that bicategories of algebras are closed under inserters  \cite{blackwell1989two,VvdW}.
Note that the terminology ``displayed inserter'' has also been used for the inserter of displayed functors \cite{bocquet2021induction}, which is different from what we look at.

\begin{definition}[Displayed inserter, \coqident{Bicategories.DisplayedBicats.Examples.DisplayedInserter}{disp_inserter_bicat}]
 \label{def:disp_inserter}
 Let $\B$ and $\D$ be bicategories and let $F,G : \B \to \D$ be pseudofunctors.
 We define the following displayed bicategory over $B$, called the \fat{displayed inserter}:
 \begin{enumerate}
  \item displayed objects over $b : \B_0$ are 1-cells $g : \D_1(Fb,Gb)$;
  \item displayed 1-cells over $f:B_1(b,b')$ from $g : \D_1(Fb,Gb)$ to $g' : \D_1(Fb',Gb')$ are displayed 2-cells $\gamma$ as in
    \[
      \begin{xy}
        \xymatrix{
        Fb \ar[r]^{Ff} \ar[d]_{g}
        &
        Fb' \ar[d]^{g'}
        \\
        Gb \ar[r]_{Gf}
        &
        Gb' \ultwocell<\omit>{\gamma}
        }
      \end{xy}
    \]

  \item displayed 2-cells over \xymatrix{
         b \rtwocell^f_g{\theta}& b'
      }
      from $\gamma : g \cdot Gf \mytwocell Ff \cdot g'$ to $\gamma' : g \cdot Gf' \mytwocell Ff' \cdot g'$ are identities
      $\gamma \vcomp (F\theta \whiskerr g') = (g \whiskerl G\theta) \vcomp \gamma'$.

  \item composition of 1-cells is defined using whiskering and the associator in $\D$.
 \end{enumerate}
 The remaining properties are readily shown; we refer to the formalization for details.
\end{definition}

\begin{example}
\label{ex:1-inserters}
\Cref{def:alg_bicat,def:disp_presheaf_bicat} are---almost---instances of Definition~\ref{def:disp_inserter}.
Specifically, \Cref{def:alg_bicat} is obtained as the displayed inserter with $F$ the identity pseudofunctor and with $G$ the pseudofunctor $F$ of~\Cref{def:alg_bicat}.
However, this does not yet give the correct displayed 1-cells; we furthermore need to take the full sub-bicategory of \emph{invertible} displayed 1-cells (cf.\ \coqident{Bicategories.DisplayedBicats.Examples.Sub1Cell}{disp_sub1cell_bicat}). \Cref{def:disp_presheaf_bicat} is obtained by taking $F$ to be the identity on $\Cat$ and $G$ to be the functor that is constantly $\op{\Set}$.

Note that this is slightly different than in \Cref{def:disp_presheaf_bicat}, corresponding to the two ways to represent a contravariant functor $H : A \to B$ in terms of a covariant one---as a functor $H : \op{A} \to B$ or a functor $H : A \to \op{B}$.
While \Cref{def:disp_presheaf_bicat} uses the former, this is not possible here: domain and codomain of the inserter are specified by pseudofunctors, but the function $\op{(\_)} : \Cat_0 \to \Cat_0$ on categories does not extend to a pseudofunctor $\Cat \to \Cat$ that could take the place of the pseudofunctor $F$ above.
Instead, here we have to represent contravariant functors by taking the opposite of the target category, and thus consider the constant pseudofunctor returning the category $\op{\Set}$.
\end{example}

\begin{proposition}[\coqident{Bicategories.DisplayedBicats.Examples.DisplayedInserter}{disp_inserter_bicat_univalent_2_0}]
 Suppose given data as in \Cref{def:disp_inserter}. Then the displayed inserter is
 \begin{enumerate}
  \item locally univalent;
  \item globally univalent if $\B$ and $\C$ are locally univalent.
 \end{enumerate}
\end{proposition}

Next we look at \emph{displayed 2-inserters}.
These are quite similar to displayed inserter, but with one main difference: instead 1-cells, 2-cells are added to the structure.
More precisely, given two pseudotransformations $\alpha$ and $\beta$, the displayed 2-inserter gives a displayed bicategory of maps from $\alpha(x)$ to $\beta(x)$ for every $x$.

\begin{definition}[Displayed 2-inserter, \coqident{Bicategories.DisplayedBicats.Examples.Displayed2Inserter}{disp_two_inserter_bicat}]
  \label{def:disp-2-inserter}
  Let $\B$ and $\D$ be bicategories, let $F,G : \B \to \D$ be pseudofunctors, and let $\alpha,\beta : F \mytwocell G$ be pseudotransformations.
  We define the following locally chaotic displayed bicategory over $\B$, called the \fat{displayed 2-inserter}:
  \begin{enumerate}
  \item displayed objects over $b : \B_0$ are 2-cells $r$ in $\D$ as in
   \[
       \xymatrix@C=4em@R=4em{
         F_0b \rtwocell^{\alpha(b)}_{\beta(b)}{r}& G_0b
        }
  \]
  \item displayed 1-cells over $f:B_1(b,b')$ from
        \xymatrix@C=4em@R=4em{
         F_0b \rtwocell^{\alpha(b)}_{\beta(b)}{r}& G_0b
        }
        to
        \xymatrix@C=4em@R=4em{
         F_0b' \rtwocell^{\alpha(b')}_{\beta(b')}{s}& G_0b'
        }
        are identities
        \[ (r \whiskerr G_1 f) \vcomp \beta(f) = \alpha(f) \vcomp (F_1 f \whiskerl s) . \]
  \end{enumerate}
\end{definition}
\begin{example}
 The displayed bicategory of \Cref{def:add2cell} is immediately a displayed 2-inserter.

 The displayed bicategory of \Cref{def:cofunctormap} can be obtained as the following displayed 2-inserter:
 consider the functors $F,G : \TT \to \Cat$ given by $F(C,\Ty,\Tm) \eqdef C$ and $G(\_) \eqdef \op{\Set}$. As pseudotransformations, we take the projections $\alpha \eqdef \Tm$ and $\beta \eqdef \Ty$, respectively.
 As in \Cref{ex:1-inserters}, we have to put the oppositization into the target pseudofunctor $G$, that is, take presheaves on $C$ to be functors $C \to \op{\Set}$ instead of $\op{\C} \to \Set$.
\end{example}

\begin{proposition}[\coqident{Bicategories.DisplayedBicats.Examples.Displayed2Inserter}{disp_two_inserter_univalent_2_0}]
 In the context of \Cref{def:disp-2-inserter}, the displayed 2-inserter is
 \begin{itemize}
  \item locally univalent;
  \item globally univalent if $\B$ is locally univalent.
 \end{itemize}

\end{proposition}

\section{Conclusions and Open Questions}
\label{sec:conclusion}

In the present work, we studied univalent bicategories. 
Showing that a bicategory is univalent can be challenging; to simplify this task, we 
introduced displayed bicategories, which provide a way to modularly reason about 
involved bicategorical constructions.
We then demonstrated the usefulness of displayed bicategories by using them to show
that certain complicated bicategories are univalent.
The same approach is useful for many other basic notions and constructions such as
pseudofunctors, pseudotransformations, modifications, and biequivalences:
the displayed machinery allows one to stratify their presentation and thus eases reasoning on such objects.
Veltri and Van der Weide~\cite{VvdW} used the techniques described in the present paper to construct univalent bicategories of algebras for a class of signatures.
In addition, they defined displayed biadjunctions, and those were used to construct biadjunctions between bicategories of algebras.

For the practical mechanization of mathematics in a computer proof assistant,
two issues may arise when building an elaborate bicategory as the total bicategory of iterated displayed bicategories.
Firstly, the structures may not be parenthesized as desired. This problem can be avoided or at least alleviated through
a suitable use of the sigma construction of displayed bicategories (\Cref{ex:disp_sigma} in \Cref{ex:disp_bicat}).
Secondly, ``meaningless'' terms of unit type may occur in the cells of this bicategory. We are not aware of a way of avoiding these occurrences while still using displayed bicategories.
However, both issues can be addressed through the definition of a suitable ``interface'' to the structures, in form of 
``builder'' and projection functions, which build, or project a component out of, an instance of the structure.
The interface hides the implementation details of the structure, and thus provides a welcome separation of concerns between 
mathematical and foundational aspects.

We have only started, in the present work, the development of bicategory theory in 
univalent foundations and its formalization.
There are some important questions that we have left open, such as proving the universal property of the Rezk completion.
Furthermore, the precise relationship to the bicategories studied in \cite[Example~9.1]{univalence-principle} should be established; those bicategories are defined, in particular, using relations instead of functions.
It seems reasonable to hope for our univalent bicategories to coincide (in the sense of an equivalence of types) with the univalent bicategories of \cite[Example~9.1]{univalence-principle}; a construction of such an equivalence is outside the scope of this work.
We also anticipate that the displayed machinery can be usefully employed for extending 
the comparison of different categorical structures for type theories started by Ahrens, Lumsdaine, and Voevodsky~\cite{voevodsky2018categorical} to the bicategorical setting.




\bibliography{literature}

\end{document}